 \newtheorem{thm}{Theorem}[section]
 \newtheorem{cor}[thm]{Corollary}
 \newtheorem{lem}[thm]{Lemma}
 \newtheorem{prop}[thm]{Proposition}
 \newtheorem{fact}[thm]{Fact}
 \theoremstyle{definition}
\newtheorem{ex}[thm]{Example}
 \newtheorem{rem}[thm]{Remark}
  \newtheorem{ques}[thm]{Question}
 \numberwithin{equation}{section}
\newcommand{\Hom}{\mathrm{Hom}}
\newcommand{\Ext}{\mathrm{Ext}}
\newcommand{\Tor}{\mathrm{Tor}}
\newcommand{\image}{\mathrm{im}}
\newcommand{\kernel}{\mathrm{ker}}
\newcommand{\der}{\partial}
\newcommand{\eqo}{\mathbf{1}}
\newcommand{\iid}{\mathrm{id}}
\newcommand{\eps}{\varepsilon}
\newcommand{\rk}{\mathrm{rk}}
\newcommand{\cl}{\mathrm{cl}}
\newcommand{\ccd}{\mathrm{cd}}
\newcommand{\vcd}{\mathrm{vcd}}
\newcommand{\ifl}{\mathrm{inf}}
\newcommand{\rst}{\mathrm{res}}
\newcommand{\dfl}{\mathrm{def}}
\newcommand{\ob}{\mathrm{ob}}
\newcommand{\PSL}{\mathrm{PSL}}
\newcommand{\hotimes}{\widehat{\otimes}}
\newcommand{\baG}{\bar{G}}
\newcommand{\baO}{\bar{O}}
\newcommand{\bone}{\mathbf{1}}
\newcommand{\tG}{\tilde{G}}
\newcommand{\ttheta}{\tilde{\theta}}
\newcommand{\tH}{\tilde{H}}
\newcommand{\bSigma}{\bar{\Sigma}}
\newcommand{\ca}[1]{\mathcal{#1}}
\newcommand{\cts}{\mathrm{cts}}
\newcommand{\F}{\mathbb{F}}
\newcommand{\N}{\mathbb{N}}
\newcommand{\Z}{\mathbb{Z}}
\newcommand{\II}{\mathbb{I}}
\newcommand{\Q}{\mathbb{Q}}
\newcommand{\K}{\mathbb{K}}
\newcommand{\bK}{\bar{\K}}
\newcommand{\caO}{\mathcal{O}}
\newcommand{\ccl}{\mathrm{cl}}
\newcommand{\chr}{\mathrm{char}}
\newcommand{\Gal}{\mathrm{Gal}}
\newcommand{\ZpGK}{\Z_p\dbl G_{\K}\dbr}
\newcommand{\triv}{\{1\}}
\newcommand{\dbl}{[\![}
\newcommand{\dbr}{]\!]}
\newcommand{\ZpG}{\Z_p\dbl G\dbr}
\newcommand{\prf}{\mathbf{prf}}
\newcommand{\ZpGprf}{{}_{\ZpG}\prf}
\newcommand{\FQAlg}{{}_\F\mathbf{qalg}}
\newcommand{\bA}{\mathbf{A}}
\newcommand{\bB}{\mathbf{B}}
\newcommand{\boT}{\mathbf{T}}
\newcommand{\grp}{\mathrm{grp}}
\newcommand{\Aut}{\mathrm{Aut}}
\newcommand{\Zen}{\mathrm{Z}}
\newcommand{\baQ}{\bar{Q}}
\newcommand{\bgamma}{\bar{\gamma}}
\newcommand{\teta}{\tilde{\eta}}
\newcommand{\bh}{\bar{h}}
\newcommand{\tpi}{\tilde{\pi}}
\newcommand{\ppp}{\amalg^p}
\newcommand{\btheta}{\bar{\theta}}
\newcommand{\htheta}{\widehat{\theta}}
\newcommand{\euB}{\mathfrak{B}}
\newcommand{\argu}{\hbox to 7truept{\hrulefill}}
\begin{document}

%-------------------------------------------------------------------------

%---------------------------------------------------------------------------
%Insert here the title, affiliations and abstract:
%

\title[Cyclotomic {$p$}-orientations]
{Profinite groups \\
		with a cyclotomic {$p$}-orientation}

%----------Author 1
\author[C. Quadrelli]{Claudio Quadrelli}

\address{%
Department of Mathematics and Applications\\
University of Milan-Bicocca\\
20125 Milan\\
Italy - EU}

\email{claudio.quadrelli@unimib.it}

%----------Author 2
\author{Thomas S. Weigel}
\address{%
Department of Mathematics and Applications\\
University of Milan-Bicocca\\
20125 Milan\\
Italy - EU}

\email{thomas.weigel@unimib.it}

\thanks{Both authors were partially supported by the PRIN 2015 ``Group Theory and Applications''. The first-named author was also partially supported by the Israel Science Fundation (grant No. 152/13)}

%%----------classification, keywords, date

\subjclass{Primary 12G05; Secondary 20E18, 12F10.}

\keywords{Absolute Galois groups, Rost-Voevodsky Theorem, Elementary Type Conjecture.}

\date{today}
%----------additions
\dedicatory{To the memory of Vladimir Voevodsky}
%%% ----------------------------------------------------------------------

\begin{abstract}
Let $p$ be a prime. 
A continuous representation $$\theta\colon G\to\mathrm{GL}_1(\Z_p)$$ of a profinite group $G$ is called a cyclotomic $p$-orientation
if for all open subgroups $U\subseteq G$ and for all $k,n\geq1$ the natural maps 
$$H^k(U,\Z_p(k)/p^n)\longrightarrow H^k(U,\Z_p(k)/p)$$ are surjective.
Here $\Z_p(k)$ denotes the $\Z_p$-module
of rank 1 with $U$-action induced by $\theta\vert_U^k$.
By the Rost-Voevodsky theorem, the cyclotomic character of the absolute Galois group $G_{\K}$ of a field $\K$ is, indeed, a
cyclotomic $p$-orientation of $G_{\K}$.
We study profinite groups with a cyclotomic $p$-orientation.
In particular, we show that cyclotomicity is preserved by several operations on profinite groups, and that Bloch-Kato pro-$p$
groups with a cyclotomic $p$-orientation satisfy a strong form of Tits' alternative and decompose as semi-direct product
over a canonical abelian closed normal subgroup.
\end{abstract}

%%% ----------------------------------------------------------------------
\maketitle
%%% ----------------------------------------------------------------------
%\tableofcontents

%%%%%%%%%%

\section{Introduction}
\label{s:intro}

For a prime $p$ let $\Z_p$ denote the ring of $p$-adic integers.
For a profinite group $G$, we call a continuous representation $\theta\colon G\to\Z_p^\times=\mathrm{GL}_1(\Z_p)$
a {\sl $p$-orientation} of $G$ and call the couple $(G,\theta)$ a {\sl $p$-oriented} profinite group.
Given a $p$-oriented profinite group $(G,\theta)$, for $k\in\Z$ let $\Z_p(k)$ denote the left $\Z_p\dbl G\dbr$-module induced by
$\theta^k$, namely, $\Z_p(k)$ is equal to the additive group $\Z_p$ and the left $G$-action is given by
\begin{equation}
\label{eq:defZp1}
g\cdot z=\theta(g)^{k}\cdot z,\qquad g\in G,\ z\in\Z_p(k).
\end{equation}
Vice-versa, if $M$ is a topological left $\Z_p\dbl G\dbr$-module which as an abelian pro-$p$ group is isomorphic to $\Z_p$, then there exists a unique $p$-orientation 
$\theta\colon G\to\Z_p^\times$ such that $M\simeq \Z_p(1)$.

The $\Z_p\dbl G\dbr$-module $\Z_p(1)$ and the representation $\theta\colon G\to\Z_p^\times$ are said to be 
{\sl $k$-cyclotomic}, for $k\geq1$, if for every open subgroup $U$ of $G$ and every $n\geq1$ the natural maps
\begin{equation}\label{eq:cyc}
 \xymatrix{ H^k(U,\Z_p(k)/p^n)\ar[r] & H^k(U,\Z_p(k)/p) },
\end{equation}
induced by the epimorphism of $\Z_p\dbl U\dbr$-modules $\Z_p(k)/p^n\to\Z_p(k)/p$, are surjective.
If $\Z_p(1)$ (respectively $\theta$) is $k$-cyclotomic for every $k\geq1$, then it is called simply a
cyclotomic $\Z_p\dbl G\dbr$-module (resp., cyclotomic $p$-orientation).
Note that $\Z_p(1)$ is $k$-cyclotomic if, and only if, $H^{k+1}_{\cts}(U,\Z_p(k))$ is a torsion free $\Z_p$-module
for every open subgroup $U\subseteq G$ --- here $H_{\cts}^*$ denotes continuous cochain cohomology as introduced
by J.~Tate in \cite{tate:miln} (see \S~\ref{ss:delta}).

Cyclotomic modules of profinite groups have been introduced and studied by C.~De~Clercq and M.~Florence in \cite{smooth}.
Previously J.P.~Labute, in \cite{labute:demushkin}, considered surjectivity of \eqref{eq:cyc} in the case $k=1$ and $U=G$ ---
note that demanding surjectivity for $U=G$ only is much weaker than demanding it for every
open subgroup $U\subseteq G$, and this is what makes the definition of cyclotomic modules truly new.

Let $\K$ be a field, and let $\bar{\K}/\K$ be a separable closure of $\K$.
If $\chr(\K)\not=p$, the {\sl absolute Galois group} $G_{\K}=\Gal(\bar{\K}/\K)$ of $\K$
comes equipped with a canonical $p$-orientation
\begin{equation}
\label{eq:char}
\theta_{\K,p}\colon G_{\K}\longrightarrow \Aut(\mu_{p^\infty}(\bar{\K}))\simeq\Z_p^\times,
\end{equation}
where $\mu_{p^\infty}(\bar{\K})\subseteq \bar{\K}^\times$ 
denotes the subgroup of roots of unity of $\bar{\K}$ of $p$-power order.
If $p=\chr(\K)$, we put $\theta_{\K,p}= \eqo_{G_{\K}}$, the function which is constantly $1$ on $G_{\K}$.
The following result (cf. \cite[Prop.~14.19]{smooth}) is a consequence of the positive solution
of the Bloch-Kato Conjecture given by M.~Rost and V.~Voevodsky with the ``C.~Weibel patch''
(cf. \cite{rost:BK,voev,weibel}), which from now on we will refer to as the Rost-Voevodsky Theorem.

\begin{thm}\label{thmA}
 Let $\K$ be a field, and let $p$ be prime number.
The canonical $p$-orientation $\theta_{\K,p}\colon G_{\K}\to \Z_p^\times$ is cyclotomic.
\end{thm}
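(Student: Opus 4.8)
The plan is to reduce the statement to the Rost--Voevodsky Theorem applied to each finite subextension. Every open subgroup $U\subseteq G_{\K}$ has the form $U=G_L=\Gal(\bar{\K}/L)$ for a finite separable extension $L/\K$, and the restriction $\theta_{\K,p}\vert_U$ coincides with the canonical $p$-orientation $\theta_{L,p}$ of $G_L$; thus $\Z_p(k)\vert_U$ is the analogous Tate twist for $G_L$, and it suffices to show, for every such $L$ and all $k,n\geq1$, that
$$H^k(G_L,\Z_p(k)/p^n)\longrightarrow H^k(G_L,\Z_p(k)/p)$$
is surjective. I would first dispatch the case $p=\chr(\K)$, where $\theta_{\K,p}=\eqo_{G_{\K}}$ so that the action on $\Z_p(k)/p^m=\Z/p^m$ is trivial: since $\ccd_p(G_L)\leq1$ one has $H^{k+1}_{\cts}(G_L,\Z_p(k))=0$ for all $k\geq1$, which is torsion free, so $\theta_{L,p}$ is cyclotomic by the criterion recalled after \eqref{eq:cyc}.

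Assume now $\chr(\K)\neq p$. Then $\Z_p(1)=\varprojlim_n\mu_{p^n}(\bar{\K})$ is the Tate module of roots of unity, and there are canonical identifications of $\Z_p\dbl G_L\dbr$-modules $\Z_p(k)/p^n\simeq\mu_{p^n}^{\otimes k}$. The Rost--Voevodsky Theorem furnishes, for every $k,n\geq1$, a norm residue (Galois symbol) isomorphism
$$h^k_{L,n}\colon K^M_k(L)/p^n\stackrel{\simeq}{\longrightarrow}H^k(G_L,\mu_{p^n}^{\otimes k})$$
out of the mod-$p^n$ Milnor $K$-theory of $L$. The strategy is to transport the map $H^k(G_L,\mu_{p^n}^{\otimes k})\to H^k(G_L,\mu_p^{\otimes k})$ across these isomorphisms and recognise it on the Milnor side as the obvious reduction map.

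Concretely, the key step is the naturality of the norm residue homomorphism with respect to the reduction of coefficients: the square whose horizontal arrows are the reduction $K^M_k(L)/p^n\to K^M_k(L)/p$ and the coefficient map $H^k(G_L,\mu_{p^n}^{\otimes k})\to H^k(G_L,\mu_p^{\otimes k})$, and whose vertical arrows are $h^k_{L,n}$ and $h^k_{L,1}$, is commutative. Granting this, the desired surjectivity is immediate: the top arrow $K^M_k(L)/p^n\to K^M_k(L)/p$ is the canonical surjection of abelian groups $A/p^nA\twoheadrightarrow A/pA$, so \eqref{eq:cyc} is onto. As an internal check one may instead verify that $H^{k+1}_{\cts}(G_L,\Z_p(k))$ is torsion free: the isomorphisms $h^k_{L,n}$ identify the inverse system $\{H^k(G_L,\mu_{p^n}^{\otimes k})\}_n$ with $\{K^M_k(L)/p^n\}_n$, whose transition maps are surjective, so by Mittag--Leffler the map $H^k_{\cts}(G_L,\Z_p(k))\to H^k(G_L,\mu_p^{\otimes k})$ is surjective, which is exactly the vanishing of the $p$-torsion of $H^{k+1}_{\cts}(G_L,\Z_p(k))$.

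The only genuine obstacle, once the Rost--Voevodsky Theorem is granted, is the commutativity of that square, i.e.\ the compatibility of the symbol maps $h^k_{L,n}$ with change of the modulus $p^n\rightsquigarrow p$. This is a formal property of the construction of the norm residue map rather than a deep point, but it must be spelled out carefully, in particular to ensure that the coefficient reduction $\mu_{p^n}^{\otimes k}\to\mu_p^{\otimes k}$ corresponds on Milnor $K$-theory precisely to reduction modulo $p$ and not to some twist thereof.
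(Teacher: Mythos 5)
Your proposal is correct, and its skeleton (reduce to open subgroups $U=G_L$, dispatch $\chr(\K)=p$ via $\ccd_p\leq 1$, then invoke Rost--Voevodsky) matches the paper's, but the mechanism in the case $\chr(\K)\neq p$ is genuinely different. The paper uses only the \emph{mod-$p$} norm residue isomorphism $(h_k)_{/p}$ together with Tate's \emph{integral} symbol $h_k(\K)\colon K^M_k(\K)\to H^k_{\cts}(G_{\K},\Z_p(k))$ from \cite{tate:miln}: commutativity of the diagram \eqref{eq:tate3} forces the reduction map $\alpha$ to be surjective, hence the connecting map $\beta$ vanishes and $H^{k+1}_{\cts}(G_{\K},\Z_p(k))$ is torsion free, which is cyclotomicity by Proposition~\ref{fact:p torsion}. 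You instead take as input the norm residue isomorphisms $h^k_{L,n}$ at \emph{every} modulus $p^n$ and their naturality under coefficient reduction, which gives the surjectivity of \eqref{eq:cyc} directly at each finite level, with no need for the integral symbol or the long exact sequence. Your input is formally stronger: the theorem as proved by Rost--Voevodsky--Weibel is indeed usually stated for $\Z/m$ coefficients, so the mod-$p^n$ case is covered by the cited references, but note that the standard deduction of the mod-$p^n$ statement from the mod-$p$ one is itself a d\'evissage of exactly the kind the paper's argument short-circuits; the paper's route buys a proof from the minimal input (mod-$p$ isomorphism plus existence of Tate's symbol), while yours buys a more transparent argument in which the surjectivity of \eqref{eq:cyc} is literally read off from $K^M_k(L)/p^n\twoheadrightarrow K^M_k(L)/p$. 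Your Mittag--Leffler ``internal check'' is redundant given the direct argument, but it is sound, and it is in fact the same mechanism the paper uses inside the proof of Proposition~\ref{fact:p torsion}; the naturality of the symbol maps you flag as the remaining obstacle is indeed formal, since each $h^k_{L,n}$ is induced by cup products of Kummer classes and the Kummer sequences at different moduli are compatible.
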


Theorem~\ref{thmA} provides a fundamental class of examples of profinite groups
endowed with a cyclotomic $p$-orientation.
Bearing in mind the exotic character of absolute Galois groups, it also provides
a strong motivation to the study of cyclotomically $p$-oriented profinite groups
--- which is the main purpose of this manuscript.
In fact, one may recover several Galois-theoretic statements already for profinite groups with
a 1-cyclotomic $p$-orientation
--- e.g., the only finite group endowed with a 1-cyclotomic $p$-orientation is the finite group $C_2$ of order 2,
with non-constant 2-orientation $\theta\colon C_2\to\{\pm1\}$ (cf. \cite[Ex.~3.5]{eq:kummer}), 
and this implies the Artin-Schreier obstruction for absolute Galois groups.
In their paper, De~Clercq and Florence formulated the ``Smoothness Conjecture'',
which can be restated in this context as follows: for a $p$-oriented profinite group,
1-cyclotomicity implies $k$-cyclotomicity for all $k\geq1$ (cf. \cite[Conj.~14.25]{smooth}).

A $p$-oriented profinite group $(G,\theta)$ is said to be {\sl Bloch-Kato} if the $\F_p$-algebra
\begin{equation}\label{eq:H}
 H^\bullet(U,\htheta\vert_U)=\coprod_{k\geq0} H^k(U,\F_p(k)),
\end{equation}
where $\F_p(k)=\Z_p(k)/p$, with product given by cup-product,
is quadratic for every open subgroup $U$ of $G$.
Note that if $\image(\theta)\subseteq1+p\Z_p$ and $p\neq2$
then $G$ acts trivially on $\Z_p(k)/p$.
By the Rost-Voevodsky Theorem $(G_{\K},\theta_{\K,p})$ is, indeed, Bloch-Kato.

%In the class of $p$-oriented profinite (or pro-$p$) groups one has three operations:
%inverse limits, free profinite (resp. pro-$p$) product of two $p$-oriented profinite (resp. pro-$p$) groups,
%and the fibre product $A\rtimes (G,\theta)$ of a $p$-oriented profinite group $(G,\theta)$ with a free abelian pro-$p$ group 
%$A$, with $gag^{-1}=a^{\theta(g)}$ for all $a\in A$ and $g\in G$ (cf. \S~\ref{ss:fibprod}).
%Moreover, given a $p$-oriented profinite group $(G,\theta)$, one may consider the $p$-oriented quotient $(G/N,\theta)$
%with $N$ a closed normal subgroup of $G$ contained in $\kernel(\theta)$.
%These operations preserve cyclotomicity.
For a profinite group $G$, let $O_p(G)$ denote the maximal closed normal pro-$p$ subgroup of $G$.
A $p$-oriented profinite group $(G,\theta)$ has two particular closed normal subgroups:
the kernel $\kernel(\theta)$ of $\theta$, and the {\sl $\theta$-center} of $(G,\theta)$, given by
\begin{equation}
\label{eq:thetaZ}
\Zen_\theta(G)=\left\{\,x\in O_p(\kernel(\theta))\mid gxg^{-1}=x^{\theta(g)}\ \text{for all $g\in G$}\,\right\}.
\end{equation}
As $\Zen_\theta(G)$ is contained in the center $\Zen(\kernel(\theta))$ of $\kernel(\theta)$, it is abelian.
The $p$-oriented profinite group $(G,\theta)$ will be said to be 
{\sl $\theta$-abelian}, if $\kernel(\theta)=\Zen_\theta(G)$ and if $\Zen_\theta(G)$ is torsion free. 
In particular, for such a $p$-oriented profinite
group $(G,\theta)$, $G$ is a virtual pro-$p$ group (i.e., $G$ contains an open subgroup which is a pro-$p$ group).
Moreover, a $\theta$-abelian pro-$p$ group $(G,\theta)$ will be said to be {\sl split} if
$G\simeq\Zen_\theta(G)\rtimes\image(\theta)$.

As $\Zen_\theta(G)$ is contained in $\kernel(\theta)$, by definition, the canonical quotient
$\baG=G/\Zen_\theta(G)$ carries naturally a $p$-orientation $\btheta\colon\baG\to
\Z_p^\times$, and one has the following short exact sequence of $p$-oriented profinite groups.

\begin{equation}\label{eq:spq}
\xymatrix{ \triv\ar[r]& \Zen_\theta(G)\ar[r]&G\ar[r]^-\pi&\baG\ar[r]&\triv }
\end{equation}

The following result can be seen as an analogue of the equal characteristic transition theorem 
(cf. \cite[\S II.4, Exercise~1(b), p.~86]{ser:gal}) for cyclotomically $p$-oriented Bloch-Kato profinite groups.

\begin{thm}\label{thmC}
Let $(G,\theta)$ be a cyclotomically $p$-oriented Bloch-Kato profinite group. Then
\eqref{eq:spq} splits, provided that $\ccd_p(G)<\infty$, and one of the following conditions hold:
\begin{itemize}
\item[\textup{(i)}] $G$ is a pro-$p$ group,
\item[\textup{(ii)}] $(G,\theta)$ is an oriented virtual pro-$p$ group \textup{(}see \S 4
\textup{)},
\item[\textup{(iii)}] $(\baG,\btheta)$ is cyclotomically $p$-oriented and Bloch-Kato.
\end{itemize}
\end{thm}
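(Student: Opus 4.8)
The plan is to read the splitting of \eqref{eq:spq} as the vanishing of a single obstruction class in $H^2_{\cts}(\baG,\Zen_\theta(G))$, and then to reduce that vanishing, in two uniform steps, to a statement in mod-$p$ cohomology that the Bloch-Kato hypothesis is exactly designed to control.

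First I would fix the $\baG$-module structure of $A:=\Zen_\theta(G)$. Since $A\subseteq\kernel(\theta)$, the relation $gxg^{-1}=x^{\theta(g)}$ makes $A$ a topological $\Z_p\dbl\baG\dbr$-module on which $\baG$ acts through $\btheta$, and $\ccd_p(A)\le\ccd_p(G)<\infty$ forces $A$ to be a free $\Z_p$-module of some finite rank $d$; here $1$-cyclotomicity of $G$ is what excludes $p$-torsion, since a copy of $\F_p(1)$ in $A$ would produce torsion in the relevant $H^2_{\cts}$. Thus $A\simeq\Z_p(1)^{\oplus d}$ as a $\baG$-module, and $H^1(A,\Z_p(1))^{\baG}=\Hom(A,\Z_p(1))\simeq\Z_p^{\,d}$, the twist cancelling in the conjugation action. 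Writing $\omega\in H^2_{\cts}(\baG,A)$ for the class of \eqref{eq:spq}, the sequence splits if and only if $\omega=0$.

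The next step is the five-term inflation--restriction sequence of $1\to A\to G\to\baG\to1$ with coefficients in $\Z_p(1)$. Its transgression $\mathrm{tg}\colon H^1(A,\Z_p(1))^{\baG}\to H^2_{\cts}(\baG,\Z_p(1))$ sends the $d$ coordinate projections $A\to\Z_p(1)$ to the $d$ components of $\omega$ under $H^2_{\cts}(\baG,A)\simeq H^2_{\cts}(\baG,\Z_p(1))^{\oplus d}$, so $\omega=0$ is equivalent to surjectivity of $\res\colon H^1_{\cts}(G,\Z_p(1))\to\Hom(A,\Z_p(1))$. I would then descend to $\F_p$: because $G$ is $1$-cyclotomic, $H^2_{\cts}(G,\Z_p(1))$ is torsion free, so the reduction $H^1_{\cts}(G,\Z_p(1))\to H^1(G,\F_p(1))$ is onto; comparing with the corresponding restriction on $A$ in the evident commuting square and applying Nakayama's lemma to the finitely generated $\Z_p$-module $\image(\res)\subseteq\Z_p^{\,d}$, the sought $\Z_p$-surjectivity follows from the mod-$p$ surjectivity of $\res\colon H^1(G,\F_p(1))\to\Hom(A,\F_p(1))\simeq\F_p^{\,d}$.

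This last surjectivity --- equivalently, by five-term exactness with $\F_p(1)$-coefficients, the vanishing of the mod-$p$ transgression $\bar\omega$, i.e.\ the statement that every $\baG$-homomorphism $A\to\F_p(1)$ lifts to a class in $H^1(G,\F_p(1))$ --- is the heart of the matter and the step I expect to resist. It is here that quadraticity of the Bloch-Kato algebra and the \emph{maximality} built into the definition of $\Zen_\theta(G)$ must be combined: a nonzero component of $\bar\omega$ would, through the quadratic structure of $H^\bullet(\baG,\F_p(\bullet))$ (resp.\ of $H^\bullet(G,\F_p(\bullet))$), be a cup product of degree-one classes, from which one manufactures a central $\theta$-twisted element lying outside $\Zen_\theta(G)$, contradicting maximality. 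To make this argument available I would induct on $\ccd_p(G)$ (equivalently on $d=\rk_{\Z_p}(A)$), splitting off one $\baG$-summand $Z\simeq\Z_p(1)$ at a time; the sole purpose of the three hypotheses is to keep the quotient within a class carrying the required quadratic algebra. Under (i) every quotient of $G$ by such a summand is again a pro-$p$, cyclotomically oriented Bloch-Kato group; under (ii) it remains an oriented virtual pro-$p$ group, so one first runs the pro-$p$ case on an open pro-$p$ subgroup and then descends along the finite quotient; and under (iii) the standing assumption that $(\baG,\btheta)$ is cyclotomically oriented and Bloch-Kato supplies the quadratic algebra on $\baG$ outright. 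Once $\bar\omega=0$ is secured, the reductions above upgrade it to $\omega=0$, and \eqref{eq:spq} splits.
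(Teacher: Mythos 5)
Your first three reductions (reading the splitting of \eqref{eq:spq} as the vanishing of $\omega\in H^2_{\cts}(\baG,\Zen_\theta(G))$, the five-term sequence, and the descent from $\Z_p$- to $\F_p$-surjectivity via $1$-cyclotomicity and Nakayama) are plausible in outline, though you should note that the five-term sequence in Tate's continuous cochain cohomology with the non-discrete coefficients $\Z_p(1)$ needs a limit argument over $\Z/p^n$-coefficients. The genuine gap is exactly the step you flagged: the surjectivity of $\res\colon H^1(G,\F_p(1))\to\Hom(\Zen_\theta(G),\F_p(1))$ is asserted, not proved. The sketch offered --- that a nonzero component of $\bar\omega$ would ``manufacture a central $\theta$-twisted element lying outside $\Zen_\theta(G)$, contradicting maximality'' --- is not an argument: $\Zen_\theta(G)$ is by \emph{definition} the set of all $\theta$-twisted central elements of $O_p(\kernel(\theta))$, so there is nothing to contradict unless the element is actually constructed, and no mechanism is given by which a nonvanishing transgression produces a group element. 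Moreover, the one-shot surjectivity you need is strictly stronger than anything the paper proves in a single step. The paper's Proposition~\ref{prop:norm} shows only that a single isolated normal subgroup $Z\simeq\Z_p$ with torsion-free quotient satisfies $Z\not\subseteq G^p[G,G]$, i.e.\ that \emph{some} degree-one class restricts nontrivially to $Z$; even this requires a real argument (if $Z\subseteq\Phi(G)$, quadraticity forces inflation to be surjective in all degrees, so the Hochschild--Serre $E_\infty$-term is concentrated on the bottom row, contradicting $E_\infty^{d-1,1}=E_2^{d-1,1}\neq 0$). The full vanishing of $\omega$ is then never needed: Proposition~\ref{prop:split} builds a complement \emph{iteratively}, choosing maximal subgroups $C_k$ with $C_kZ=G$ and $C_k\cap Z=Z^{p^k}$ and setting $C=\bigcap_k C_k$, and the proof of the theorem inducts on $\ccd_p(G)$, splitting off one isolated rank-one $Z\subseteq\Zen_\theta(G)$ at a time and using $\ccd_p(G/Z)=\ccd_p(G)-1$. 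Your proposal would need a replacement of the ``maximality'' paragraph by an argument of at least this strength, and none is supplied.

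The treatment of cases (ii) and (iii) also does not work as described. For (ii), ``running the pro-$p$ case on an open pro-$p$ subgroup and descending along the finite quotient'' is not automatic --- a complement on an open subgroup does not descend; the paper instead makes the entire iterative construction $\bSigma$-equivariant, using Proposition~\ref{prop:maxSig} to choose the maximal subgroups $C_k$ to be $\bSigma$-invariant. For (iii), $G$ is an arbitrary profinite group, so no Frattini-type or mod-$p$ generation argument applies to $G$ directly, and the hypothesis on $(\baG,\btheta)$ is used quite differently from ``supplying the quadratic algebra outright'': the paper passes to the $p$-perfect group $\baO=O^p(\baG^\circ)$, uses Bloch-Kato-ness and cyclotomicity of the \emph{quotient} to get $H^2(\baO,\Z_p(1))=0$, splits the pullback extension over $\baO$ to conclude $\Zen_\theta(G)\cap O^p(G^\circ)=\triv$, passes to $\tG=G/O^p(G^\circ)$ where inflation is an isomorphism by Proposition~\ref{propB}, applies cases (i)/(ii) there, and pulls the complement back to $G$. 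None of these ingredients appears in your sketch, so as written the proposal does not establish the theorem in any of the three cases.
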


In the case that $(G,\theta)$ is the maximal pro-$p$ Galois group of a field $\K$ containing a primitive $p^{th}$-root of unity
endowed with the $p$-orientation induced by $\theta_{\K,p}$, $\Zen_\theta(G)$
is the inertia group of the maximal $p$-henselian valuation of $\K$ (cf. Remark~\ref{rem:arithmetic_Z}). 

Note that the 2-oriented pro-2 group $(C_2\times\Z_2,\theta)$ may be $\theta$-abelian, 
but $\theta$ is never 1-cyclotomic (cf. Proposition~\ref{fact:C2Z2}).
As a consequence, in a cyclotomically 2-oriented pro-$2$ group every element of order 2 is self-centralizing.

For $p$ odd it was shown in \cite{cq:BK}
that a Bloch-Kato pro-$p$ group $G$ satisfies a strong form of {\sl Tits alternative},
i.e., either $G$ contains a closed non-abelian free pro-$p$ subgroup, or
there exists a $p$-orientation $\theta\colon G\to\Z_p^\times$ such that $G$ is $\theta$-abelian.
In Subsection~\ref{ss:2gen} we extend this result to pro-$2$ groups with a cyclotomic orientation, i.e.,
one has the following analogue of R.~Ware's theorem (cf. \cite{ware}) for 
cyclotomically oriented Bloch-Kato pro-$p$ groups (cf. Fact~\ref{fact:tits}).

\begin{thm}\label{thmD}
 Let $(G,\theta)$ be a cyclotomically $p$-oriented Bloch-Kato pro-$p$ group.
 If $p=2$ assume further that $\image(\theta)\subseteq 1+4\Z_2$.
Then one --- and only one --- of the following cases hold:
\begin{itemize}
\item[\textup{(}i\textup{)}] $G$ contains a closed non-abelian free pro-$p$ subgroup; or
\item[\textup{(}ii\textup{)}] $G$ is $\theta$-abelian.
\end{itemize}
\end{thm}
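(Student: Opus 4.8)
The plan is to establish the two alternatives as mutually exclusive and then to show that the failure of (i) forces (ii). Exclusivity is immediate: a $\theta$-abelian pro-$p$ group is metabelian, being an extension $\Zen_\theta(G)\rtimes\image(\theta)$ of the procyclic (or trivial) group $\image(\theta)$ by the abelian group $\Zen_\theta(G)$, whereas a non-abelian free pro-$p$ group is not metabelian; hence (i) and (ii) cannot hold simultaneously. For the substance of the theorem I would first reduce to two-generated closed subgroups. By the preservation results established in the earlier sections, every closed subgroup $H\subseteq G$ is again a cyclotomically $p$-oriented Bloch-Kato pro-$p$ group with respect to $\theta\vert_H$, and a pro-$p$ group contains a non-abelian free pro-$p$ subgroup if and only if some pair of elements generates a closed free pro-$p$ subgroup of rank $2$. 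Thus the whole dichotomy is controlled by the subgroups $H=\overline{\langle x,y\rangle}$.

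Next I would prove the per-pair dichotomy: such an $H$ is either free of rank $2$ or $\theta$-abelian. Here $\dim_{\F_p}H^1(H,\F_p)\le 2$, and quadraticity of $H^\bullet(H,\htheta\vert_H)$ means that $H^2(H,\F_p)$ is spanned by the cup products of a basis $a,b$ of $H^1(H,\F_p)$. The key computation I would carry out is that the cup-product form on $H^1(H,\F_p)$ is alternating: for $p$ odd this is graded-commutativity, while for $p=2$ the hypothesis $\image(\theta)\subseteq 1+4\Z_2$ is exactly what forces the cup-squares $a\cup a$ and $b\cup b$ to vanish, the Bockstein/orientation obstruction to $x\cup x=0$ being killed modulo $4$. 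Consequently $\dim_{\F_p}H^2(H,\F_p)\le 1$, so $H$ is free of rank $2$ when $H^2(H,\F_p)=0$ and is a one-relator pro-$p$ group of rank $2$ otherwise. In the latter case $H$ is Demushkin, and the cyclotomicity of $\theta\vert_H$ -- equivalently the torsion-freeness of $H^{2}_{\cts}(H,\Z_p(1))$ -- pins the single defining relation down to the oriented form $yxy^{-1}=x^{\theta(y)}$, so that $H\simeq\overline{\langle x\rangle}\rtimes\overline{\langle y\rangle}$ is $\theta$-abelian.

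It then remains to globalize. If some pair generates a free rank-$2$ subgroup we are in case (i); otherwise every pair is $\theta$-abelian, which means that $gxg^{-1}=x^{\theta(g)}$ holds for all $g\in G$ and all $x\in\kernel(\theta)$ (recall $\kernel(\theta)=O_p(\kernel(\theta))$ since $G$ is pro-$p$). By the definition \eqref{eq:thetaZ} this says precisely that $\kernel(\theta)=\Zen_\theta(G)$; applying the relation inside $\kernel(\theta)$, where $\theta\equiv 1$, shows $\kernel(\theta)$ is abelian, and the fact that the only finite cyclotomically oriented group is $C_2$ -- excluded here for $p=2$ by the assumption $\image(\theta)\subseteq 1+4\Z_2$, and vacuous for $p$ odd -- rules out torsion, so $\Zen_\theta(G)$ is torsion free. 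Thus $G$ is $\theta$-abelian and (ii) holds.

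The main obstacle I expect is the cup-square computation for $p=2$ together with the identification of the one-relator case. Showing that $a\cup a=0$ under $\image(\theta)\subseteq 1+4\Z_2$ is exactly the point where the orientation and the cyclotomicity interact, and it is what allows the even prime to imitate the alternating behaviour that is automatic for odd $p$; getting this right, and then using torsion-freeness of the relevant continuous cohomology to exclude non-oriented Demushkin relations, is where the real work lies. The remaining steps are essentially the gluing of the local oriented relations into the global $\theta$-abelian structure, extending the argument of the odd-prime case treated in \cite{cq:BK}.
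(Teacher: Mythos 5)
Your proposal is correct, but it takes a genuinely different route from the paper. The paper's proof (Proposition~\ref{prop:pro-2 tits}, with \cite[Thm.~4.6]{cq:BK} covering odd $p$) is short because it outsources the dichotomy: it checks that $G$ is torsion free (Proposition~\ref{prop:tor2}(a)) and that the first Bockstein vanishes (Fact~\ref{prop:bock} --- exactly your mod-$4$ cup-square computation), then invokes the locally powerful machinery of \cite[Thms.~4.6, 4.11]{cq:BK}, which yields a $\theta'$-abelian structure for \emph{some} orientation $\theta'$, and finally recovers the given $\theta$ via the uniqueness statement Corollary~\ref{cor:tab}(c). You instead reprove the dichotomy from scratch: the reduction to $2$-generated closed subgroups is legitimate since Corollaries~\ref{cor:comp} and~\ref{cor:Bkop} make both hypotheses hereditary for closed subgroups; your per-pair dichotomy is the paper's Fact~\ref{fact:tits}, derived differently; and your gluing of the local relations $gxg^{-1}=x^{\theta(g)}$ into $\kernel(\theta)=\Zen_\theta(G)$ is essentially how the paper proves Theorem~\ref{prop:Zmxlabeliannormal}. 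What your route buys is self-containedness and the fact that it produces $\theta$ itself, not a $\theta'$ to be matched afterwards. What it costs: the step ``cyclotomicity pins the relation down to $yxy^{-1}=x^{\theta(y)}$'' conceals real content --- you need the Demu\v skin--Serre--Labute classification of rank-$2$ Demu\v skin groups together with Labute's computation (equivalently, the Poincar\'e-duality argument showing $H^2_{\cts}(H,\Z_p(1))$ has torsion unless $\theta\vert_H=\eth_H$), so that the only $1$-cyclotomic orientation on a Demu\v skin group of rank $\geq 2$ is the dualizing one; under $\image(\theta)\subseteq 1+4\Z_2$ this excludes precisely the Klein bottle group of Remark~\ref{rem:kb2}, which is the case making the mod-$4$ hypothesis necessary. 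You should also state the procyclic case $d(H)\leq 1$ explicitly in the dichotomy (harmless: torsion-free procyclic groups are $\theta$-abelian when $\image(\theta)$ is torsion free), but that is cosmetic, not a gap.
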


It should be mentioned that for $p=2$ the additional hypothesis is indeed necessary
(cf. Remark~\ref{rem:kb2}).
The class of cyclotomically $p$-oriented Bloch-Kato profinite groups is closed with respect to several constructions.

\begin{thm}\label{thmB}
\begin{itemize}
\item[\textup{(}a\textup{)}] The inverse limit of an inverse system of cyclotomically $p$-oriented Bloch-Kato profinite groups
with surjective structure maps 
is a cyclotomically $p$-oriented Bloch-Kato profinite group (cf. Corollary~\ref{cor:comp2} and Corollary~\ref{cor:quadlim}).
\item[\textup{(}b\textup{)}] The free profinite (resp. pro-$p$) product of two cyclotomically $p$-oriented Bloch-Kato profinite (resp. pro-$p$) groups
is a cyclotomically $p$-oriented Bloch-Kato profinite (resp. pro-$p$) group (cf. Theorem~\ref{thm:freeprod}).
\item[\textup{(}c\textup{)}] The fibre product of a cyclotomically $p$-oriented Bloch-Kato profinite group $(G_1,\theta_1)$
with a split $\theta_2$-abelian profinite group $(G_2,\theta_2)$
is a cyclotomically $p$-oriented Bloch-Kato profinite group (cf. Theorem~\ref{thm:cycfib} and Theorem~\ref{thm:fibco}).
\item[\textup{(}d\textup{)}] The quotient of a cyclotomically $p$-oriented Bloch-Kato profinite group $(G,\theta)$
with respect to a closed normal subgroup $N\subseteq G$ satisfying $N\subseteq\kernel(\theta)$ and $N$ a $p$-perfect group
is a cyclotomically $p$-oriented Bloch-Kato profinite group (cf. Proposition~\ref{propB}).% --- in particular, the maximal pro-$p$ quotient
%of a cyclotomically $p$-oriented Bloch-Kato profinite group is a cyclotomically $p$-oriented Bloch-Kato pro-$p$ group.
\end{itemize}
\end{thm}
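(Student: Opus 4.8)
The plan is to reduce both defining properties to cohomological conditions that are stable under the four constructions, and then to analyse each construction by describing its open subgroups and computing the relevant cohomology. Recall from the discussion after \eqref{eq:cyc} that $(G,\theta)$ is cyclotomic if and only if $H^{k+1}_{\cts}(U,\Z_p(k))$ is a $\Z_p$-torsion free module for every open $U\subseteq G$ and every $k\geq1$, while the Bloch--Kato property is the quadraticity of the graded $\F_p$-algebra $H^\bullet(U,\htheta\vert_U)$ for every open $U$. Since both conditions are demanded for \emph{all} open subgroups, in each case I would first pin down what the open subgroups of the constructed group look like, and then verify that torsion-freeness of the continuous cohomology and quadraticity of the mod-$p$ cohomology algebra are inherited.

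For part (a) the key point is that continuous cohomology turns a surjective inverse system into a filtered colimit. If $G=\varprojlim_i G_i$ with surjective structure maps and compatible orientations $\theta_i$, then for fixed $k,n$ the action on $\Z_p(k)/p^n$ factors through a finite quotient, hence through some $G_i$, so $H^\bullet_{\cts}(G,\Z_p(k)/p^n)=\varinjlim_i H^\bullet(G_i,\Z_p(k)/p^n)$, and similarly with $\F_p(k)$ coefficients. Every open subgroup of $G$ is the preimage of an open subgroup of some $G_i$ and is itself the inverse limit of a surjective subsystem, so the reduction is uniform. As filtered colimits are exact, they preserve the surjectivity of \eqref{eq:cyc}, preserve $\Z_p$-torsion freeness, and send a filtered system of quadratic algebras to a quadratic algebra; this gives (a).

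For part (b) I would combine the Kurosh subgroup theorem with the cohomological decomposition of a free (pro-$p$ or profinite) product. By Kurosh, every open subgroup of $G_1\amalg G_2$ is again a free product of a free (pro-$p$) group and of conjugates of open subgroups of the factors, so it suffices to treat $U=G$ and to know that free (pro-$p$) groups are cyclotomically oriented Bloch--Kato (immediate, since they have cohomological dimension $1$). For $G=G_1\amalg G_2$ a Mayer--Vietoris argument identifies $H^{m}_{\cts}(G,-)$ with $H^{m}_{\cts}(G_1,-)\oplus H^{m}_{\cts}(G_2,-)$ for $m\geq2$, so torsion-freeness in degree $k+1\geq2$ is inherited from the factors, whereas the graded algebra $H^\bullet(G,\htheta\vert_G)$ is the coproduct of the two factor algebras in the category of quadratic algebras, hence quadratic. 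The compatibility of the orientation with the free product structure is what makes the twisted coefficient modules match up on the two factors.

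Parts (c) and (d) are both handled by a Lyndon--Hochschild--Serre spectral sequence, and (c) is where I expect the real difficulty. For (d), an open subgroup of $\baG$ lifts to an open $U\supseteq N$ and fits into $\triv\to N\to U\to U/N\to\triv$ with $N\subseteq\kernel(\theta)$ and $N$ $p$-perfect; since $N$ then acts trivially on the $p$-torsion coefficients and $H^1(N,\F_p)=0$, the spectral sequence forces inflation to be an isomorphism in the low degrees governing the generators and the degree-$2$ relations, and comparing the two quadratic presentations transfers quadraticity and torsion-freeness down to the quotient. For (c) the fibre product is realised as an extension $\triv\to T\to G\to G_1\to\triv$ whose kernel $T=\kernel(\theta_2)=\Zen_{\theta_2}(G_2)$ is a torsion-free product of copies of the cyclotomic module $\Z_p(1)$, with $G_1$ acting through $\theta_1$; the splitness $G_2\simeq\Zen_{\theta_2}(G_2)\rtimes\image(\theta_2)$ is exactly what lets me describe this extension explicitly. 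The hard part is to run the spectral sequence for this extension: one must compute $H^\bullet(T,\Z_p(k)/p^n)$ as a graded $G_1$-module, show that the twist keeps $H^{k+1}_{\cts}$ torsion free, and verify that adjoining the new degree-$1$ classes coming from $T$ turns the quadratic algebra of $G_1$ into a quadratic algebra for $G$ --- an exterior/Koszul-type extension whose quadraticity is the technical heart of the whole theorem.
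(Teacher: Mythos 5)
Your parts (a) and (b) follow the paper's architecture. For (a) your direct argument works: since the structure maps are morphisms of $p$-oriented groups, each kernel $N_i$ lies in $\kernel(\theta)$ and acts trivially on the finite coefficients, so $H^k(U,\Z_p(k)/p^n)=\varinjlim_j H^k(U_j,\Z_p(k)/p^n)$ and a filtered colimit of surjections is surjective, while quadraticity passes to filtered colimits (this is Fact~\ref{prop:dirlimquad} and Corollary~\ref{cor:quadlim}; the paper proves cyclotomicity instead via Brumer's homology criterion, Proposition~\ref{prop:comp} and Corollary~\ref{cor:comp2}, but your route is sound). For (b), however, the Mayer--Vietoris decomposition you invoke for $H^m_{\cts}(G,-)$ with $m\geq 2$ is only on record for \emph{discrete torsion} coefficients; there is no such statement for Tate's continuous cochain cohomology with $\Z_p(k)$ coefficients. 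This is precisely why the paper first converts ``$H^{k+1}_{\cts}(U,\Z_p(k))$ torsion free'' into ``$H^k(U,\II_p(k))$ divisible'' (Proposition~\ref{prop:comp}) and then runs the Kurosh decomposition with $\II_p(k)$ coefficients, handling $k=1$ through an exact sequence rather than a direct sum. This is repairable but, as written, your step would not compile into a proof.

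The genuine gaps are in (c) and (d). In (d) your spectral-sequence sketch fails: $p$-perfectness gives only $H^1(N,\F_p)=0$, which kills the row $t=1$ but says nothing about $H^t(N,\F_p)$ for $t\geq 2$, so inflation need not even be surjective in degree $2$; moreover quadraticity of $H^\bullet(\baG,\htheta)$ cannot be read off from degrees $\leq 2$ --- a quadratic presentation constrains the algebra in \emph{all} degrees, so you need the ring isomorphism everywhere. The missing idea is that the Bloch--Kato property passes to closed subgroups (Corollary~\ref{cor:Bkop}): $H^\bullet(N,\F_p)$ is then a quadratic algebra with trivial degree-one part, hence vanishes in all positive degrees; Pontryagin duality gives $H_k(N,\Z_p)=0$ for $k>0$, and the paper deflates a projective resolution to obtain the inflation isomorphism in all degrees for all profinite coefficient modules (Proposition~\ref{propB}). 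In (c) you misplace the difficulty: the Hochschild--Serre spectral sequence is the easy part (it collapses because the kernel has $\ccd_p=1$ in the rank-one case and the extension splits), whereas the real obstacle is that cyclotomicity must be checked on \emph{every} open subgroup $U$ of the fibre product, and it is not formal that such a $U$ is again a split extension $U\simeq(Z\cap U)\rtimes\pi(U)$. This is Lemma~\ref{lem:splGr}, whose proof constructs the complement by lifting a crossed homomorphism, using exactly the surjectivity of $H^1_{\cts}(\bar U,\Z_p(1))\to H^1(\bar U,\Z_p(1)/p^k)$ --- i.e., the $1$-cyclotomicity of $G_1$ enters already at this structural step, not merely in the coefficient computation. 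You also leave the quadraticity of the extended algebra unresolved; the paper settles it by Wadsworth's theorem, giving $H^\bullet(G,\htheta)\simeq H^\bullet(G_1,\htheta_1)\otimes\Lambda_\bullet\bigl((\kernel(\theta_2)/\kernel(\theta_2)^p)^\ast\bigr)$ (Theorem~\ref{thm:fibco}), with induction on the rank and the observation that a tensor product of quadratic algebras is quadratic; the infinite-rank case of both properties then needs the inverse-limit reduction of part (a), which you never invoke for (c).
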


Some time ago I.~Efrat (cf. \cite{ido:ETC,efrat:ETC,ido:small}) has formulated the so-called
{\sl elementary type conjecture} concerning the structure of finitely generated pro-$p$ groups 
occurring as maximal pro-$p$ quotients of an absolute Galois group.
His conjecture can be reformulated in the class of cyclotomically $p$-oriented
Bloch-Kato pro-$p$ groups.
Such a $p$-oriented pro-$p$ group $(G,\theta)$ is said to be
{\sl indecomposable} if $\Zen_\theta(G)=\triv$ and if $G$ is not a proper free 
pro-$p$ product.
A positive answer to the following question would settle the elementary type conjecture
affirmatively.

\begin{ques}\label{ques:ETC}
Let $(G,\theta)$ be a finitely generated, torsion free, indecomposable, cyclotomically oriented
Bloch-Kato pro-$p$ group. Does this imply that $G$ is a Poincar\'e duality pro-$p$ group of dimension $\ccd_p(G)\leq 2$?
\end{ques}

{\small
The paper is organized as follows.
In \S~\ref{s:absgal} we give some equivalent definitions for cyclotomic $p$-orientations.
In \S~\ref{s:cpo} we study some operations of profinite groups (inverse limits, free products and fibre products) in relation
with the properties of cyclotomicity and Bloch-Kato-ness, and we prove Theorem~\ref{thmB}(a)-(b)-(c).
In \S~\ref{s:virtual} we study the quotients of cyclotomically $p$-oriented profinite groups over closed normal $p$-perfect subgroups
--- in particular, we introduce {\sl oriented virtual pro-$p$ groups} and we prove Theorem~\ref{thmB}(d).
In \S~\ref{s:PDgps} we study $p$-oriented profinite Poincar\'e duality groups.
In \S~\ref{s:torBK} we focus on the presence of torsion in cyclotomically 2-oriented pro-2 groups, and we prove that 
in a 1-cyclotomically 2-oriented pro-2 group every element of order 2 is self-centralizing (see Proposition~\ref{fact:C2Z2}).
In \S~\ref{s:prop} we focus on the structure of cyclotomically $p$-oriented Bloch-Kato pro-$p$ groups:
we prove Theorems~\ref{thmC} and \ref{thmD}, and show that in many cases the $\theta$-center is the maximal abelian closed normal subgroup
(cf. Theorem~\ref{prop:Zmxlabeliannormal}).}

%%%%%%%%%%%%%%%%%%%%%%%%%%%%%%%%%%%%%%%%%%%%%%%%%%%%%%%%%%%%%%%%%%%%%%%%%%%%%%%%%%%%%%%%%%%%%%%%%%%%%%
%%%%%%%%%%%%%%%%%%%%%%%%%%%%
%%%%%%%%%%%%%%%%%%%%%%%%%%%%%%%%%%%%%%%%%%%%%%%%%%%%%%%%%%%%%%%%%%%%%%%%%%%%%%%%%%%%%%%%%%%%%%%%%%%%%%

\section{Absolute Galois groups and cyclotomic $p$-orientations}
\label{s:absgal}
%Throughout this paper subgroups of profinite groups are assumed to be closed (in the profinite topology).

Throughout the paper, we study profinite groups with a cyclotomic module $\Z_p(1)$.
In contrast to \cite[\S~14]{smooth}, we refer to the associated representation $\theta\colon G\to\Z_p^\times$,
rather than to the module itself.
As we study several subgroups of $G$
associated to this cyclotomic module $\Z_p(1)$, like $\kernel(\theta)$ and $\Zen_\theta(G)$,
this choice of notation turns out to be convenient for our purposes.
We follow the convention as established in \cite{cq:BK,cq:phd} and call such representations ``$p$-orientations''.\footnote{
For a Poincar\'e duality group $G$ the representation associated to the {\sl dualizing module} --- which coincides with 
the cyclotomic module in the case of a Poincar\'e duality pro-$p$ group of dimension 2 (cf. Theorem~\ref{thm:propPD2}) ---
is sometimes also called the ``orientation'' of $G$.}
In the case that $G$ is a pro-$p$ group, the couple $(G,\theta)$ was called a {\sl cyclotomic pro-$p$ pair},
in \cite[\S~3]{ido:small}.

%%%%%%%%%%%%%555

\subsection{The connecting homomorphism $\delta^k$}
\label{ss:delta}
Let $G$ be a profinite group, and let $\theta\colon G\to\Z_p^\times$ be a $p$-orientation of $G$.
For every $k\geq0$ one has the short exact sequence of left $\ZpG$-modules
\begin{equation}
\label{eq:sesZpZpFp}
\xymatrix{0\ar[r] & \Z_p(k)\ar[r]^-{p\cdot} & \Z_p(k)\ar[r] & \F_p(k)\ar[r] & 0},
\end{equation}
which induces the long exact sequence in cohomology
\begin{equation}
\label{eq:lesZpZpFp} 
\xymatrix@C=0.8truecm{ \cdots\ar[r]^-{p\cdot} & H_{\cts}^k(G,\Z_p(k)) \ar[r]^-{\pi^k} & H^k(G,\F_p(k))\ar`r[d]`[l]^{\delta^k} `[dll] `[dl] [dl]   \\
& H_{\cts}^{k+1}(G,\Z_p(k))\ar[r]^-{p} & H_{\cts}^{k+1}(G,\Z_p(k))\ar[r] & \cdots}
\end{equation}
with connecting homomorphism $\delta^k$ (cf. \cite[\S 2]{tate:miln}).
In particular, $\delta^k$ is trivial if, and only if, multiplication by $p$ on $H_{\cts}^{k+1}(G,\Z_p(k))$
is a monomorphism. This is equivalent to $H_{\cts}^{k+1}(G,\Z_p(k))$ being torsion free.
Therefore, one concludes the following:

\begin{prop}\label{fact:p torsion}
Let $(G,\theta)$ be a $p$-oriented profinite group.
For $k\geq1$ and $U\subseteq G$ an open subgroup the following are equivalent.
\begin{itemize}
 \item[(i)] The map \eqref{eq:cyc} is surjective for every $n\geq1$.
 \item[(ii)] The map $\pi^k\colon H_{\cts}^k(U,\Z_p(k)) \to H^k(U,\F_p(k))$ is surjective.
 \item[(iii)] The connecting homomorphism $\delta^k\colon H^k(U,\F_p(k))\to H_{\cts}^{k+1}(U,\Z_p(k))$ is trivial.
 \item[(iv)] The $\Z_p$-module $H_{\cts}^{k+1}(U,\Z_p(k))$ is torsion free.
\end{itemize}
\end{prop}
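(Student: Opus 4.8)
The plan is to establish the equivalences $(\mathrm{ii})\Leftrightarrow(\mathrm{iii})\Leftrightarrow(\mathrm{iv})$ formally, and then to close the loop with the two implications $(\mathrm{ii})\Rightarrow(\mathrm{i})$ and $(\mathrm{i})\Rightarrow(\mathrm{iii})$; only the last of these carries real content. For the first block I would simply run the analogue of \eqref{eq:lesZpZpFp} with $G$ replaced by the open subgroup $U$. Exactness at $H^k(U,\F_p(k))$ gives $\image(\pi^k)=\kernel(\delta^k)$, so $\pi^k$ is surjective if and only if $\delta^k$ is trivial, which is $(\mathrm{ii})\Leftrightarrow(\mathrm{iii})$; and, exactly as recorded in \S~\ref{ss:delta}, triviality of $\delta^k$ is equivalent to injectivity of multiplication by $p$ on $H_{\cts}^{k+1}(U,\Z_p(k))$, hence to torsion freeness of that $\Z_p$-module, which is $(\mathrm{iii})\Leftrightarrow(\mathrm{iv})$.

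For $(\mathrm{ii})\Rightarrow(\mathrm{i})$ I would use only functoriality. The reduction $\Z_p(k)\to\F_p(k)$ factors through $\Z_p(k)/p^n$, so $\pi^k$ factors as
$$H_{\cts}^k(U,\Z_p(k))\longrightarrow H^k(U,\Z_p(k)/p^n)\longrightarrow H^k(U,\F_p(k)),$$
the second arrow being the map \eqref{eq:cyc}. A surjective map that factors in this way has a surjective final factor, so \eqref{eq:cyc} is onto for every $n$.

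The decisive step is $(\mathrm{i})\Rightarrow(\mathrm{iii})$. Here I would compare, for each $n\geq1$, the sequence $0\to\Z_p(k)\xrightarrow{p^n}\Z_p(k)\to\Z_p(k)/p^n\to0$ with \eqref{eq:sesZpZpFp} via the morphism of short exact sequences whose vertical maps are multiplication by $p^{n-1}$, the identity, and the reduction $\Z_p(k)/p^n\to\F_p(k)$, respectively. On connecting homomorphisms this produces the commuting relation
$$\delta^k\circ r_n=p^{n-1}\cdot\delta^k_n,$$
where $r_n$ is \eqref{eq:cyc} and $\delta^k_n$ is the connecting map of the $p^n$-sequence, whose image is the $p^n$-torsion submodule of $H_{\cts}^{k+1}(U,\Z_p(k))$. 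Now fix $w=\delta^k(\bar x)$ in the image of $\delta^k$. Assuming $(\mathrm{i})$, for each $n$ I may write $\bar x=r_n(y)$, whence $w=p^{n-1}\delta^k_n(y)$ lies in $p^{n-1}H_{\cts}^{k+1}(U,\Z_p(k))$ and satisfies $pw=p^n\delta^k_n(y)=0$. Letting $n$ vary exhibits $w$ as a $p$-torsion element of $\bigcap_{m\geq0}p^m H_{\cts}^{k+1}(U,\Z_p(k))$.

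The remaining --- and, I expect, most delicate --- point is that this intersection vanishes, i.e.\ that $H_{\cts}^{k+1}(U,\Z_p(k))$ is $p$-adically separated. Writing $\Z_p(k)=\varprojlim_n\Z_p(k)/p^n$ and invoking Tate's continuous cochain description, this module is assembled from the inverse system of the groups $H^{k+1}(U,\Z_p(k)/p^n)$, each annihilated by $p^n$, so that no nonzero element is infinitely $p$-divisible; granting $\bigcap_m p^m H_{\cts}^{k+1}(U,\Z_p(k))=0$ forces $w=0$, hence $\delta^k=0$, which is $(\mathrm{iii})$. The subtlety I would have to be careful about is that a possible derived limit $\varprojlim^1$ in lower degree not contribute a divisible $p$-torsion part spoiling separatedness; this is also why I route the argument through the $p$-divisibility of $\image(\delta^k)$ rather than trying to deduce surjectivity of the transition maps $H^k(U,\Z_p(k)/p^{n+1})\to H^k(U,\Z_p(k)/p^n)$, which $(\mathrm{i})$ does not in fact imply.
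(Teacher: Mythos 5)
Your block (ii)$\Leftrightarrow$(iii)$\Leftrightarrow$(iv), the factorization argument for (ii)$\Rightarrow$(i), and the comparison of the mod-$p$ and mod-$p^n$ Bockstein sequences giving $\delta^k\circ r_n=p^{n-1}\cdot\delta^k_n$ --- hence that every $w\in\image(\delta^k)$ is $p$-torsion and lies in $\bigcap_{m\geq0}p^m H^{k+1}_{\cts}(U,\Z_p(k))$ --- are all correct. But the point you ``grant'', namely $\bigcap_{m\geq0}p^m H^{k+1}_{\cts}(U,\Z_p(k))=0$, is a genuine gap, and it is not a general fact about profinite groups. Tate's exact sequence reads
\begin{equation*}
0\longrightarrow{\varprojlim_n}^1\, H^k\bigl(U,\Z_p(k)/p^n\bigr)\longrightarrow H^{k+1}_{\cts}\bigl(U,\Z_p(k)\bigr)\longrightarrow\varprojlim_n H^{k+1}\bigl(U,\Z_p(k)/p^n\bigr)\longrightarrow0,
\end{equation*}
and your observation that each $H^{k+1}(U,\Z_p(k)/p^n)$ is killed by $p^n$ only controls the right-hand quotient; the ${\varprojlim}^1$ subgroup --- which you yourself flag --- can be nonzero for an arbitrary profinite $U$ (towers $H^k(U,\Z_p(k)/p^n)$ need not be Mittag-Leffler: already the tower $\Hom_{\cts}(U,\Z/p^n)$ for $U=\prod_{i\geq1}\Z/p^i$ fails it), and nothing in your argument excludes divisible $p$-torsion inside it. Note also that the two properties you establish for $w$ do not suffice abstractly: in $\II_p$ every element is $p$-power torsion and infinitely $p$-divisible. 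So the separatedness you need is exactly the point where hypothesis (i) must be used, and in your write-up it never is.

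Moreover, your parting justification --- that (i) ``does not in fact imply'' surjectivity of the transition maps --- is false, and retracting it is precisely how the gap closes. Dévissage with the exact sequence $0\to\Z_p(k)/p^{n-1}\xrightarrow{\,p\cdot\,}\Z_p(k)/p^n\to\F_p(k)\to0$ works: given $a\in H^k(U,\Z_p(k)/p^n)$ and $m\geq n$, use (i) to choose $b\in H^k(U,\Z_p(k)/p^m)$ mapping to the image of $a$ in $H^k(U,\F_p(k))$; then $a-\pi^k_{m,n}(b)$ lies in the image of $(p\cdot)_*$ from $H^k(U,\Z_p(k)/p^{n-1})$, and since $(p\cdot)_*$ commutes with the reduction maps, induction on $n$ shows that every $\pi^k_{m,n}$ is surjective. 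This is exactly the Mittag-Leffler property invoked in the paper's proof; it annihilates the ${\varprojlim}^1$ term, after which your divisibility computation goes through --- or, more directly, one composes the always-surjective map $H^k_{\cts}(U,\Z_p(k))\to\varprojlim_n H^k(U,\Z_p(k)/p^n)$ with the (now surjective) projection onto $H^k(U,\F_p(k))$, which is the paper's route straight to (ii).
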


\begin{proof}
By the long exact sequence \eqref{eq:lesZpZpFp}, the equivalences between (ii), (iii) and (iv) are straightforward.
For $m\geq n\geq1$ let $\pi_{m,n}^k$ denote the natural maps
$$\pi^k_{m,n}\colon H^k(U,\Z_p(k)/p^m)\longrightarrow H^k(U,\Z_p(k)/p^n)$$
(if $m=\infty$ we set $p^\infty=0$).
If condition (i) holds then the system $$(H^k(U,\Z_p/p^{n}),\pi_{m,n}^k)$$ satisfies the Mittag-Leffler property.
In particular, $$H^k(U,\Z_p(k))\simeq\varprojlim_{n\geq1}H^k(U,\Z_p(k)/p^n)$$
(cf. \cite{roos} and \cite[Thm.~2.7.5]{nsw:cohn}).
Thus $\pi^k=\pi_{n,1}^k\circ\pi^k_{\infty,n}$ is surjective if, and only if, $\pi_{n,1}^k$ is 
surjective for every $n\geq1$. 
Conversely, if $\pi^k$ is surjective then $\pi^k=\pi_{n,1}^k\circ\pi^k_{\infty,n}$ yields the surjectivity
of $\pi_{n,1}^k$ for every $n$.
\end{proof}

%%%%%%%%%%%%%%%%%%%%%%%%%%%%%%%%%%%%%%%%%%%%%%%%%%%%%%%%%%%%%%%%%%%%

\subsection{Profinite groups of cohomological $p$-dimension at most $1$}
\label{ss:1dim}
Let $G$ be a profinite group, and let $\theta\colon G\to\Z_p^\times$ be a $p$-orientation of $G$.
Then
\begin{equation}
\label{eq:cyc1}
H^1_{\cts}(G,\Z_p(0))=\Hom_{\grp}(G,\Z_p)
\end{equation}
is a torsion free abelian group for every profinite group $G$, i.e., $\theta$ is $0$-cyclotomic.
If $G$ is of cohomological $p$-dimension less or equal to $1$,
then $H^{m+1}_{\cts}(G,\Z_p(m))=0$ for all $m\geq 1$ showing that $\theta$ is cyclotomic.
Moreover, $H^\bullet(G,\hat\theta)$ is a quadratic $\F_p$-algebra for every
profinite group with $\ccd_p(G)\leq 1$ and for any $p$-orientation $\theta\colon G\to\Z_p^\times$.
If $G$ is of cohomological $p$-dimension less or equal to $1$,
one has $\ccd_p(C)\leq1$ for every closed subgroup $C$ of $G$ (cf. \cite[\S I.3.3, Proposition~14]{ser:gal}).
Thus one has the following.

\begin{fact}
\label{fact:ccd1}
Let $G$ be a profinite group with $\ccd_p(G)\leq 1$, and let $\theta\colon G\to\Z_p^\times$
be a $p$-orientation for $G$. Then $(G,\theta)$ is Bloch-Kato and $\theta$ is cyclotomic.
\end{fact}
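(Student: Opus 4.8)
The plan is to verify the two asserted properties---that $\theta$ is cyclotomic and that $(G,\theta)$ is Bloch-Kato---separately, since each reduces almost immediately to the cohomological smallness hypothesis $\ccd_p(G)\leq 1$ and to facts already assembled in the excerpt.

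First I would establish cyclotomicity. By Proposition~\ref{fact:p torsion}, for each $k\geq 1$ and each open subgroup $U\subseteq G$ the $k$-cyclotomicity condition \eqref{eq:cyc} is equivalent to the torsion-freeness of the $\Z_p$-module $H^{k+1}_{\cts}(U,\Z_p(k))$. So it suffices to show this module vanishes. The key observation is that cohomological $p$-dimension is inherited by closed (in particular open) subgroups: by \cite[\S I.3.3, Proposition~14]{ser:gal}, as already quoted in the text, $\ccd_p(U)\leq\ccd_p(G)\leq 1$ for every open $U\subseteq G$. Since $k\geq 1$ forces $k+1\geq 2>\ccd_p(U)$, one gets $H^{k+1}_{\cts}(U,\Z_p(k))=0$, which is trivially torsion free. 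Here I would note the one delicate point: $\ccd_p$ bounds cohomology with \emph{finite} (torsion) $p$-primary coefficients, whereas $\Z_p(k)$ is not finite. The passage to continuous cochain cohomology with $\Z_p$-coefficients is handled by writing $\Z_p(k)=\varprojlim_n \Z_p(k)/p^n$ and using that $H^{j}(U,\Z_p(k)/p^n)=0$ for $j\geq 2$ together with the Mittag-Leffler/$\varprojlim$ description $H^{k+1}_{\cts}(U,\Z_p(k))\simeq\varprojlim_n H^{k+1}(U,\Z_p(k)/p^n)$, exactly the tool invoked in the proof of Proposition~\ref{fact:p torsion} (cf. \cite{roos}, \cite[Thm.~2.7.5]{nsw:cohn}); the inverse limit of zero modules is zero. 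The $k=0$ case is dispatched by \eqref{eq:cyc1}, which shows $H^1_{\cts}(G,\Z_p(0))=\Hom_{\grp}(G,\Z_p)$ is always torsion free, so $\theta$ is $0$-cyclotomic for any $G$.

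Next I would verify the Bloch-Kato property, i.e.\ that $H^\bullet(U,\htheta\vert_U)=\coprod_{k\geq 0}H^k(U,\F_p(k))$ is a quadratic $\F_p$-algebra for every open $U\subseteq G$. Again using $\ccd_p(U)\leq 1$, now with the \emph{finite} coefficient module $\F_p(k)=\Z_p(k)/p$, we get $H^k(U,\F_p(k))=0$ for all $k\geq 2$. Hence the graded algebra is concentrated in degrees $0$ and $1$: it equals $\F_p\oplus H^1(U,\F_p(1))$ with all products of two degree-one classes vanishing. Such an algebra is quadratic essentially by inspection---the defining ideal of relations is generated in degree $2$, since everything above degree $1$ is forced to be zero and there are no relations needed below degree $2$. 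This is precisely the content of the remark made just before the statement, that $H^\bullet(G,\hat\theta)$ is quadratic whenever $\ccd_p(G)\leq 1$.

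The two halves combine to give that $(G,\theta)$ is Bloch-Kato and $\theta$ is cyclotomic, proving the Fact. I do not anticipate a genuine obstacle, as the result is essentially a collation of the preceding discussion; the only step requiring care is the one flagged above, namely making the jump from the finite-coefficient vanishing encoded in $\ccd_p(U)\leq 1$ to the vanishing of the continuous $\Z_p(k)$-cohomology in degrees $\geq 2$. Provided one cites the inverse-limit formalism already used in Proposition~\ref{fact:p torsion}, this is routine, and the rest is immediate from Serre's subgroup bound on cohomological dimension together with the degree-$0$ torsion-freeness of $\Hom_{\grp}(G,\Z_p)$.
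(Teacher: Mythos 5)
Your proof is correct and follows essentially the same route as the paper, whose argument is exactly the paragraph preceding the Fact: $0$-cyclotomicity from the torsion-freeness of $\Hom_{\grp}(G,\Z_p)$, vanishing of $H^{m+1}_{\cts}(U,\Z_p(m))$ for $m\geq1$ from $\ccd_p(U)\leq 1$, quadraticity because the mod-$p$ cohomology is concentrated in degrees $0$ and $1$, and Serre's bound $\ccd_p(C)\leq\ccd_p(G)$ for closed subgroups. The only difference is that you spell out the passage from finite-coefficient vanishing to the vanishing of continuous $\Z_p(k)$-cohomology via the inverse-limit (Mittag--Leffler) formalism --- a detail the paper leaves implicit --- and your justification is sound, since the vanishing of $H^2(U,\F_p(k))$ also forces surjectivity of the transition maps on $H^1$, so $\varprojlim^1$ causes no trouble.
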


%%%%%%%%%%%%%%%%%%%%%%%%%%%%%%%%%%%%%%%%%%%%%%%%%%%%%%%%%%%%%%%%%%%%%%%%%%%%%%%%%5

\subsection{The $m^{th}$-norm residue symbol}
\label{ss:tate}
Throughout this subsection we fix a field $\K$ and a separable closure $\bar{\K}$ of $\K$.
For $p\not=\chr(\K)$, $\mu_{p^\infty}(\bar{\K})$ is a {\sl divisible} abelian group.
By construction, one has a canonical isomorphism
\begin{equation}
\label{eq:Qcyc}
\textstyle{\varprojlim_{k\geq 0} (\mu_{p^\infty}(\bar{\K}), p^k)\simeq\Z_p(1)\otimes_{\Z}\Q_p=\Q_p(1)}
\end{equation}
and a short exact sequence $0\to\Z_p(1)\to\Q_p(1)\to\mu_{p^\infty}(\bar{\K})\to 0$
of topological left $\ZpGK$-modules, where $G_{\K}=\Gal(\bar{\K}/\K)$ is the absolute Galois group of $\K$.

Let $K^M_m(\K)$, $m\geq 0$, denote the {\sl $m^{th}$-Milnor $K$-group} of $\K$ (cf. \cite[\S24.3]{ido:miln}).
For $p\not=\chr(\K)$,
J.~Tate constructed in \cite{tate:miln} a homomorphism of abelian groups
\begin{equation}
\label{eq:tate1}
h_m(\K)\colon K^M_m(\K)\longrightarrow H^m_{\cts}(G_{\K},\Z_p(m)),
\end{equation}
the so-called {\sl $m^{th}$-norm residue symbol}.
Let $K^M_m(\K)_{/p}=K^M_m(\K)/pK^M_m(\K)$.
Around ten years later S.~Bloch and K.~Kato conjectured  in \cite{bk:ihes} that the induced map
\begin{equation}
\label{eq:tate2}
h_m(\K)_{/p}\colon K^M_m(\K)_{/p}\longrightarrow H^m(G_{\K},\F_p(m))
\end{equation}
is an isomorphism for all fields $\K$, $\chr(\K)\not=p$, and for all $m\geq 0$.
This conjecture has been proved by V.~Voevodsky and M.~Rost with a ``patch'' of C.~Weibel
(cf. \cite{rost:BK,voev,weibel}).
In particular, since $K^M_\bullet(\K)_{/p}$ is a quadratic $\F_p$-algebra
and as $h_\bullet(\K)_{/p}$ is a homomorphism of algebras, 
this implies that the absolute Galois group of a field $\K$ is Bloch-Kato
(cf. \cite[\S23.4]{ido:miln}).
The Rost-Voevodsky Theorem has also the following consequence.

\begin{prop}
\label{thm:cyc}
Let $\K$ be a field, let $G_{\K}$ denote its absolute Galois group,
and let $\theta_{\K,p}\colon G_{\K}\to\Z_p^\times$ denote its canonical $p$-orientation.
Then $\theta_{\K,p}$ is cyclotomic.
\end{prop}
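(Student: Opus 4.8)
The plan is to reduce cyclotomicity to the surjectivity of the mod-$p$ reduction maps $\pi^k$ and then to invoke the Rost--Voevodsky Theorem through the norm residue symbols. By Proposition~\ref{fact:p torsion} (the equivalence of (i) and (ii)) it suffices to prove that for every open subgroup $U\subseteq G_{\K}$ and every $k\geq1$ the map
$$\pi^k\colon H^k_{\cts}(U,\Z_p(k))\longrightarrow H^k(U,\F_p(k))$$
is surjective. First I would treat the equal-characteristic case separately: if $p=\chr(\K)$ then $\theta_{\K,p}=\eqo_{G_{\K}}$ and $\ccd_p(G_{\K})\leq1$ by a classical theorem of Serre, so Fact~\ref{fact:ccd1} immediately yields that $\theta_{\K,p}$ is cyclotomic. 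Thus I may assume $p\neq\chr(\K)$. Next, every open subgroup is of the form $U=G_L=\Gal(\bar{\K}/L)$ for the finite separable extension $L=\bar{\K}^U$ of $\K$, and since $\bar L=\bar{\K}$ one has $\mu_{p^\infty}(\bar L)=\mu_{p^\infty}(\bar{\K})$; hence $\theta_{\K,p}\vert_{G_L}=\theta_{L,p}$ and the $\Z_p\dbl G_L\dbr$-module $\Z_p(k)$ is exactly the canonical cyclotomic module of $L$. It is therefore enough to establish surjectivity of $\pi^k$ for $U=G_L$ with $L/\K$ an arbitrary finite separable extension.

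The second step is a formal diagram chase. For each such $L$ I would consider the square
$$\xymatrix{ K^M_k(L)\ar[r]^-{h_k(L)}\ar[d] & H^k_{\cts}(G_L,\Z_p(k))\ar[d]^-{\pi^k} \\ K^M_k(L)_{/p}\ar[r]^-{h_k(L)_{/p}} & H^k(G_L,\F_p(k)), }$$
in which the left vertical arrow is the canonical projection and the bottom arrow is obtained from Tate's integral symbol $h_k(L)$ by reduction modulo $p$; commutativity is immediate from the very construction of $h_k(L)_{/p}$. The Rost--Voevodsky Theorem asserts that $h_k(L)_{/p}$ is an isomorphism, in particular surjective, and the left vertical projection is surjective by the definition of $K^M_k(L)_{/p}$. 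Consequently the composite $\pi^k\circ h_k(L)$, which by commutativity equals $h_k(L)_{/p}$ precomposed with a surjection, is surjective; hence $\pi^k$ itself is surjective. Running this over all $k\geq1$ and all $U=G_L$ and appealing once more to Proposition~\ref{fact:p torsion} shows that $\theta_{\K,p}$ is $k$-cyclotomic for every $k$, i.e. cyclotomic.

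The entire substance of the argument is carried by the Rost--Voevodsky Theorem, which supplies the surjectivity of $h_k(L)_{/p}$; everything downstream of it is purely formal. Accordingly the only points I would take care over are the passage from open subgroups to absolute Galois groups of finite extensions together with the restriction-compatibility $\theta_{\K,p}\vert_{G_L}=\theta_{L,p}$, and the commutativity of the displayed square, namely that reduction modulo $p$ intertwines $h_k(L)$ with $h_k(L)_{/p}$. Both are essentially contained in the constructions recalled in \S\ref{ss:tate}, so I do not expect a genuine obstacle: once the Bloch--Kato isomorphism is available, the proposition is just an application of Proposition~\ref{fact:p torsion}.
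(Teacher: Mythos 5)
Your proposal is correct and is essentially the paper's own proof: the same reduction of the equal-characteristic case via Serre's theorem and Fact~\ref{fact:ccd1}, the same identification of open subgroups with absolute Galois groups of finite separable extensions, and the same diagram chase through the norm residue symbols, where surjectivity of $K^M_k(L)\to K^M_k(L)_{/p}$ plus the Rost--Voevodsky isomorphism $h_k(L)_{/p}$ forces surjectivity of $\pi^k$. The only cosmetic difference is that the paper phrases the conclusion via the vanishing of the connecting map $\beta$ and torsion-freeness of $H^{k+1}_{\cts}(G_L,\Z_p(k))$ (condition (iv) of Proposition~\ref{fact:p torsion}) rather than condition (ii), which is equivalent.
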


Although Proposition~\ref{thm:cyc} might be well known to specialists, we add a short proof of it.
By Proposition~\ref{fact:p torsion}, Proposition~\ref{thm:cyc} in combination with Theorem~\ref{thmB}-(d)
is equivalent to \cite[Prop.~14.19]{smooth}.

\begin{proof}[Proof of Proposition~\ref{thm:cyc}]
If $\chr(\K)=p$, then $\ccd_p(G_{\K})\leq 1$ (cf. \cite[\S II.2.2, Proposition~3]{ser:gal}),
and the $p$-orientation $\theta_{\K,p}$ is cyclotomic by Fact~\ref{fact:ccd1}.
So we may assume that $\chr(\K)\neq p$.
In the commutative diagram
\begin{equation}
\label{eq:tate3}
\xymatrix@C=0.4cm{
K^M_k(\K)\ar[r]^{p}\ar[d]^{h_k}&K^M_k(\K)\ar[d]^{h_k}\ar[r]^{\pi}&K^M_k(\K)_{/p}\ar[r]\ar[d]^{(h_{k})_{/p}}&0\\
H^k_{\cts}(G_{\K},\Z_p(k))\ar[r]^{p}&H^k_{\cts}(G_{\K},\Z_p(k))\ar[r]^{\alpha}&H^k(G_{\K},\F_p(k))\ar[r]^-{\beta}&
H^{k+1}_{\cts}(G_{\K},\Z_p(k))
}
\end{equation}
the map $\pi$ is surjective, and $(h_{k})_{/p}$ is an isomorphism.
Hence $\alpha$ must be surjective, and thus $\beta=0$, i.e.,
$$p\colon H^{k+1}_{\cts}(G_{\K},\Z_p(k))\longrightarrow H^{k+1}_{\cts}(G_{\K},\Z_p(k))$$
is an injective homomorphism of $\Z_p$-modules.
Thus $H^{k+1}_{\cts}(G_{\K},\Z_p(k))$ must be $p$-torsion free.
Any open subgroup $U$ of $G_{\K}$ is the absolute Galois group of $\bar{\K}^U$.
Hence $\theta_{\K,p}$ is cyclotomic, and this yields the claim.
\end{proof}

\begin{rem}\label{rem:GKs}
Let $\K$ be a number field, let $S$ be a set of places containing all infinite places of $\K$ and all
places lying above $p$, and let $G_{\K}^S$ be the Galois group of $\bK^S/\K$,
where $\bK^S/\K$ is the maximal extension of $\bK/\K$ which is unramified outside $S$. 
Then $\theta_{\K,p}\colon G_{\K}\to\Z_p^\times$ induces a 
$p$-orientation $\theta_{k,p}^S\colon G_{\K}^S\to\Z_p^\times$.
However, it is well known (cf. \cite[Prop.~8.3.11(ii)]{nsw:cohn}) that,
\begin{equation}
\label{eq:GKS1}
H^1(G_{\K}^S,\II_p(1))\simeq H^1(G_{\K}^S,\caO_{\bK}^S)_{(p)}\simeq\ccl(\caO^S_{\K})_{(p)}
\end{equation} 
(for the definition of $\II_p(1)$ see \S\ref{s:cpo}), where $\ccl(\caO_{\K}^S)$ denotes the {\sl ideal class group} 
of the Dedekind domain $\caO_{\K}^S$, and $\argu_{(p)}$ denotes the $p$-primary component.
Hence $(G_{\K}^S,\theta_{\K,p}^S)$
is in general not cyclotomic (cf. Proposition~\ref{prop:comp}).
\end{rem}

%%%%%%%%%%%%%%%%%%%%%%%%%%%%%%%%%%%%%%%%%%%%%%%%%%%%%%%%%%%%%%%%%%%%%%%%%%%%%%%%%%%%%%%%%%%%%%%%%%
%%%%%%%%%%%%%%
%%%%%%%%%%%%%%%%%%%%%%%%%%%%%%%%%%%%%%%%%%%%%%%%%%%%%%%%%%%%%%%%%%5%%%%%%%%%%%%%%%%%%%%%%%%%%%%%%%%

\section{Cohomology of $p$-oriented profinite groups}
\label{s:cpo}

A homomorphism $\phi\colon (G_1,\theta_1)\to(G_2,\theta_2)$
of two $p$-oriented profinite groups $(G_1,\theta_1)$ and $(G_2,\theta_2)$ is a continuous group homomorphism 
$\phi\colon G_1\to G_2$ satisfying $\theta_1=\theta_2\circ\phi$.

Let $(G,\theta)$ be a $p$-oriented profinite group. 
For $k\in\Z$, put $\Q_p(k)=\Z_p(k)\otimes_{\Z_p}\Q_p$,
and also $\II_p(k)=\Q_p(k)/\Z_p(k)$, i.e., $\II_p(k)$ is a discrete left $G$-module
and --- as an abelian group --- a divisible $p$-torsion module.

Let $\II_p=\Q_p/\Z_p$, and let $\argu^\ast=\Hom_{\Z_p}(\argu,\II_p)$ denote the Pontryagin duality functor.
Then $\II_p(k)^\ast$ is a profinite left $\ZpG$-module which is isomorphic to $\Z_p(-k)$.

%%%%%%%%%%%%%%%%%%%%%%%%%%%%%%%%%%%%%%%%%%%%%%%%%%%%%%%%%%%%%%%%%%%%%%%%5

\subsection{Criteria for cyclotomicity}
\label{ss:cyclocrit}
The following proposition relates the continuous co-chain cohomology groups,
Galois cohomology and the Galois homology groups as defined by A.~Brumer in \cite{brum:pseudo}.

\begin{prop}
\label{prop:comp}
Let $(G,\theta)$ be a $p$-oriented profinite group, let $k$ be an integer, and let $m$ be a non-negative integer. 
Then the following are equivalent:
\begin{itemize}
\item[(i)] $H^{m+1}_{\cts}(G,\Z_p(k))$ is torsion free;
\item[(ii)] $H^m(G,\II_p(k))$ is divisible;
\item[(iii)] $H_m(G,\Z_p(-k))$ is torsion free.
\end{itemize}
\end{prop}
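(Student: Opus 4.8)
The plan is to deduce the three equivalences from a single coefficient sequence together with Pontryagin duality, treating $(\mathrm{i})\Leftrightarrow(\mathrm{ii})$ purely cohomologically and $(\mathrm{ii})\Leftrightarrow(\mathrm{iii})$ by duality, so as never to compare $H^{m+1}_{\cts}$ with $H_m$ directly. The two inputs are the short exact sequence $0\to\Z_p(k)\to\Q_p(k)\to\II_p(k)\to 0$ of $\ZpG$-modules and the duality $\II_p(k)^\ast\simeq\Z_p(-k)$ recorded just above the statement.

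For $(\mathrm{i})\Leftrightarrow(\mathrm{ii})$ I would first record that $\Q_p(k)$ is a $\Q_p$-vector space, so each $H^j_{\cts}(G,\Q_p(k))$ is uniquely divisible and torsion free, while $H^m(G,\II_p(k))=\varinjlim_n H^m(G,\Z_p(k)/p^n)$ is a $p$-torsion group. The associated long exact sequence reads
\[
H^m_{\cts}(G,\Q_p(k))\xrightarrow{\ \alpha\ }H^m(G,\II_p(k))\xrightarrow{\ \partial\ }H^{m+1}_{\cts}(G,\Z_p(k))\xrightarrow{\ \iota\ }H^{m+1}_{\cts}(G,\Q_p(k)).
\]
Since the right-hand group is torsion free, $\kernel(\iota)$ contains the torsion subgroup $T$ of $H^{m+1}_{\cts}(G,\Z_p(k))$; since $\kernel(\iota)=\image(\partial)$ is a quotient of the torsion group $H^m(G,\II_p(k))$, it is itself torsion, whence $\kernel(\iota)=T$. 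Thus condition $(\mathrm{i})$, i.e. $T=0$, is equivalent to $\iota$ being injective, hence to $\partial=0$, hence to surjectivity of $\alpha$; and if $\alpha$ is surjective then $H^m(G,\II_p(k))$, being the image of the divisible group $H^m_{\cts}(G,\Q_p(k))$, is divisible. This settles the implication $(\mathrm{i})\Rightarrow(\mathrm{ii})$.

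The converse is where I expect the only genuine difficulty. Divisibility of $H^m(G,\II_p(k))$ shows a priori only that $T=\coker(\alpha)$, a quotient of a divisible group, is divisible, not that it vanishes; so the crux is to exclude a nonzero divisible torsion subgroup of $H^{m+1}_{\cts}(G,\Z_p(k))$. I would argue at the finite layers: for $t\in T$ divisible, writing $t=p^n s_n$ and reducing modulo $p^n$ via $H^{m+1}_{\cts}(G,\Z_p(k))\to H^{m+1}(G,\Z_p(k)/p^n)$, whose kernel is $p^nH^{m+1}_{\cts}(G,\Z_p(k))$ by the long exact sequence of $\cdot p^n$, one finds $t\in\bigcap_n p^nH^{m+1}_{\cts}(G,\Z_p(k))$ and $t\mapsto 0$ in every $H^{m+1}(G,\Z_p(k)/p^n)$. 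The point is then the $p$-adic separatedness of $H^{m+1}_{\cts}(G,\Z_p(k))$: via Tate's description, this group maps to $\varprojlim_n H^{m+1}(G,\Z_p(k)/p^n)$ with kernel the $\varprojlim^1$-term $\varprojlim^1_n H^m(G,\Z_p(k)/p^n)$, and the delicate step is to see that this term contributes no torsion, so that the divisible torsion class $t$ must vanish. Granting this, $T$ is reduced, divisibility of $H^m(G,\II_p(k))$ forces $T=0$, and $(\mathrm{ii})\Rightarrow(\mathrm{i})$ follows.

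Finally $(\mathrm{ii})\Leftrightarrow(\mathrm{iii})$ is formal from Pontryagin duality. Using $\II_p(k)^\ast\simeq\Z_p(-k)$ and the standard duality between the cohomology of a discrete torsion module and the Brumer homology of its dual, one has $H^m(G,\II_p(k))^\ast\simeq H_m(G,\Z_p(-k))$. Dualizing multiplication by $p$ interchanges surjectivity and injectivity, so the discrete torsion group $H^m(G,\II_p(k))$ is divisible if and only if its profinite dual $H_m(G,\Z_p(-k))$ is $p$-torsion free, i.e. torsion free. This yields $(\mathrm{ii})\Leftrightarrow(\mathrm{iii})$ and closes the chain of equivalences.
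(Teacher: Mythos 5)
Your outer reductions are sound, and two of the three pieces are complete: the equivalence (ii)$\Leftrightarrow$(iii) via $\II_p(k)^\ast\simeq\Z_p(-k)$ and the duality $H^m(G,\II_p(k))^\ast\simeq H_m(G,\Z_p(-k))$ is exactly the paper's route (it cites \cite[(3.4.5)]{sw:cpg} for this), and your proof of (i)$\Rightarrow$(ii) from the long exact sequence of $0\to\Z_p(k)\to\Q_p(k)\to\II_p(k)\to 0$ is correct (with the routine remark, worth stating, that this sequence does yield a long exact sequence in \emph{continuous cochain} cohomology because $\II_p(k)$ is discrete, so the surjection $\Q_p(k)\to\II_p(k)$ admits a continuous section). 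The genuine gap is in (ii)$\Rightarrow$(i), and it is larger than your phrasing suggests: the step you introduce with ``granting this'' is not a residual technicality but the entire content of the implication. Indeed, $\kernel(\partial)=\image(\alpha)$ is \emph{always} divisible (it is the image of a $\Q_p$-vector space), and an extension of a divisible group by a divisible group is divisible; hence condition (ii) is \emph{equivalent} to divisibility of the torsion subgroup $T\simeq\coker(\alpha)$ of $H^{m+1}_{\cts}(G,\Z_p(k))$. So after your formal manipulations, what remains to be shown is precisely the assertion that $H^{m+1}_{\cts}(G,\Z_p(k))$ contains no nonzero divisible torsion --- which is the substance of \cite[Prop.~2.3]{tate:miln}, the very result the paper quotes for (i)$\Leftrightarrow$(ii). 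Your proposal proves everything except this.

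Moreover, the route you sketch for closing it does not go through as stated. You correctly observe that a divisible torsion class lies in $\bigcap_n p^nH^{m+1}_{\cts}(G,\Z_p(k))$, dies in every $H^{m+1}(G,\Z_p(k)/p^n)$, and therefore lies in the kernel $\varprojlim^1_n H^m(G,\Z_p(k)/p^n)$ of the map to the inverse limit. But ``this $\varprojlim^1$-term contributes no torsion'' is not a general property of towers of abelian groups: for instance, $\varprojlim^1$ of the tower $\Z\xleftarrow{\,p\,}\Z\xleftarrow{\,p\,}\cdots$ is $\Z_p/\Z$, a \emph{divisible} group with nonzero torsion, and divisible $p$-torsion groups such as $\II_p$ are cotorsion (being injective $\Z$-modules), hence exactly of the kind that can occur inside $\varprojlim^1$-terms. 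Since the proposition assumes no finiteness on $G$, the groups $H^m(G,\Z_p(k)/p^n)$ may be infinite and the $\varprojlim^1$-term genuinely nonzero, so nothing formal excludes the classes you need to kill from sitting there. Any honest proof must exploit the specific structure of this tower --- e.g.\ Tate's, which uses that $C^\bullet_{\cts}(G,\Z_p(k))$ is a complex of $p$-adically complete, separated, torsion-free $\Z_p$-modules with $C^\bullet/p^n\simeq C^\bullet(G,\Z_p(k)/p^n)$, followed by a careful limit argument. The economical repair is to do what the paper does and replace the granted step by the citation \cite[Prop.~2.3]{tate:miln}; as a self-contained argument, your write-up is incomplete at its crux.
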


\begin{proof}
The equivalence (i)$\Leftrightarrow$(ii)  is a direct consequence of \cite[Prop.~2.3]{tate:miln},
and (ii)$\Leftrightarrow$(iii) follows from \cite[(3.4.5)]{sw:cpg}.
\end{proof} 

The direct limit of divisible $p$-torsion modules is a divisible $p$-torsion module.
From this fact --- and Proposition~\ref{prop:comp} --- one concludes the following.

\begin{cor}
\label{cor:comp}
Let $(G,\theta)$ be a cyclotomically $p$-oriented profinite group.
Then $H^m(C,\II_p(m))$ is divisible for all $m\geq 0$ and all $C$ closed in $G$.
\end{cor}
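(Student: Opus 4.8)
The plan is to deduce Corollary~\ref{cor:comp} from Proposition~\ref{prop:comp} by reducing the statement about a general closed subgroup $C$ to a statement about the open subgroups of $G$, for which cyclotomicity gives us direct control. First I would fix $C\subseteq G$ closed and $m\geq0$, and observe that $C=\bigcap_{U} U$ where $U$ ranges over the open subgroups of $G$ containing $C$; equivalently, $C=\varprojlim_U U/(\,\cdot\,)$ realizes $C$ as an inverse limit of the open subgroups $U\supseteq C$ with the natural surjections as structure maps. The aim is to compute $H^m(C,\II_p(m))$ as a direct limit of the groups $H^m(U,\II_p(m))$ over this directed system, using the continuity/compatibility of cohomology of profinite groups with coefficients in a discrete module.

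The key steps, in order, are as follows. \emph{Step 1:} By Proposition~\ref{prop:comp}, the equivalence (i)$\Leftrightarrow$(ii) shows that for each \emph{open} $U\subseteq G$, the group $H^m(U,\II_p(m))$ is divisible if and only if $H^{m+1}_{\cts}(U,\Z_p(m))$ is torsion free; but the latter holds for every open $U$ by the definition of cyclotomicity together with Proposition~\ref{fact:p torsion}~(iv). Hence $H^m(U,\II_p(m))$ is divisible for every open subgroup $U$ of $G$. \emph{Step 2:} Since $\II_p(m)$ is a discrete $G$-module and $C$ is the intersection of the open subgroups containing it, continuity of profinite group cohomology yields a natural isomorphism
\begin{equation}
\label{eq:colim}
H^m(C,\II_p(m))\simeq\varinjlim_{U\supseteq C} H^m(U,\II_p(m)),
\end{equation}
where the colimit runs over open $U\supseteq C$ directed by reverse inclusion, with transition maps given by restriction (cf. \cite[\S II.2.2]{ser:gal} or the analogous continuity statement in \cite{nsw:cohn}). \emph{Step 3:} Invoke the remark preceding the corollary: the direct limit of divisible $p$-torsion modules is a divisible $p$-torsion module. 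Combining Step~1 (each term in the colimit is divisible) with \eqref{eq:colim} gives that $H^m(C,\II_p(m))$ is divisible, which is precisely the claim.

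The main obstacle I anticipate is justifying the colimit identification in Step~2 at the correct level of generality, namely that cohomology with coefficients in the discrete module $\II_p(m)$ commutes with the inverse limit of groups $C=\varprojlim U$. This is the standard continuity property of Galois (profinite) cohomology for discrete torsion coefficient modules, and it is exactly why the coefficients were shifted from $\Z_p(k)$ to $\II_p(k)$ via Proposition~\ref{prop:comp} before passing to the closed subgroup $C$: one cannot take such a direct limit for the continuous cochain cohomology $H^{*}_{\cts}(-,\Z_p(k))$ with the compact coefficients $\Z_p(k)$, but the discrete divisible module $\II_p(m)$ behaves well. Once the continuity isomorphism \eqref{eq:colim} is in hand, the divisibility is immediate and requires no further computation, so essentially the entire content of the corollary is this reduction to the open case plus the elementary permanence of divisibility under direct limits.
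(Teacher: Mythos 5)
Your proposal is correct and coincides with the paper's own argument: the paper likewise combines the continuity isomorphism $H^m(C,\II_p(m))\simeq\varinjlim_{U\supseteq C}H^m(U,\II_p(m))$ over open subgroups $U$ containing $C$ (citing \cite[\S I.2.2, Proposition~8]{ser:gal}) with Proposition~\ref{prop:comp} applied to each open $U$ and the permanence of divisibility under direct limits. The only cosmetic point is that for $m=0$ the required torsion-freeness of $H^1_{\cts}(U,\Z_p(0))$ comes not from the definition of cyclotomicity (which concerns $k\geq1$) but from the paper's observation in \S~\ref{ss:1dim} that every $p$-orientation is automatically $0$-cyclotomic.
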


\begin{proof}
It suffices to show (ii)$\Rightarrow$(i). Let $C$ be a closed subgroup
of $G$. Then $$H^m(C,\II_p(m))\simeq \varinjlim_{U\in\euB_C} H^m(U,\II_p(m)),$$
where $\euB_C$ denotes the set of all open subgroups of $G$ containing $C$ 
(cf. \cite[\S I.2.2, Proposition~8]{ser:gal}). Hence Proposition~\ref{prop:comp} yields the claim.
\end{proof}

In combination with \cite[Corollary~4.3(ii)]{brum:pseudo}, Proposition~\ref{prop:comp} implies the following.

\begin{cor}
\label{cor:comp2}
Let $(I,\preceq)$ be a directed set, 
let $(G,\theta)$ be a $p$-oriented profinite group, and let $(N_i)_{i\in I}$
be a family of closed normal subgroups of $G$ satisfying $N_j\subseteq N_i\subseteq\kernel(\theta)$ for 
$i\preceq j$ such that $\bigcap_{i\in I} N_i=\triv$ and
the induced $p$-orientation $\theta_i\colon G/N_i\to\Z_p^\times$ is cyclotomic
for all $i\in I$.
Then $\theta\colon G\to\Z_p^\times$ is cyclotomic.
\end{cor}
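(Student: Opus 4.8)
The plan is to translate cyclotomicity of $(G,\theta)$ into a torsion-freeness statement about Brumer's Galois homology, and then exploit the fact that such homology is continuous with respect to inverse limits of the group. Concretely, $(G,\theta)$ is cyclotomic precisely when $H^{k+1}_{\cts}(U,\Z_p(k))$ is torsion free for every open subgroup $U\subseteq G$ and every $k\geq1$ (by the definition together with Proposition~\ref{fact:p torsion}); and by the equivalence (i)$\Leftrightarrow$(iii) of Proposition~\ref{prop:comp} this is in turn equivalent to $H_k(U,\Z_p(-k))$ being torsion free for all such $U$ and $k$. Since $\bigcap_{i\in I}N_i=\triv$, one has $G=\varprojlim_i G/N_i$, and the homological formulation is exactly the one adapted to climbing from the quotients $G/N_i$ back up to $G$.

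First I would fix an open subgroup $U\subseteq G$ and an integer $k\geq1$, and set $U_i=UN_i/N_i\simeq U/(U\cap N_i)$, an open subgroup of $G/N_i$. Because $\bigcap_i(U\cap N_i)=U\cap\bigcap_iN_i=\triv$ and $U\cap N_j\subseteq U\cap N_i$ whenever $i\preceq j$, the group $U$ is the inverse limit $\varprojlim_i U_i$ with surjective transition maps. As $U\cap N_i\subseteq\kernel(\theta)$, the profinite $\Z_p\dbl U\dbr$-module $\Z_p(-k)$ is inflated from each $U_i$, its $U_i$-action being the one induced by $\theta_i^{-k}$. Since $(G/N_i,\theta_i)$ is cyclotomic and $U_i$ is open in $G/N_i$, the module $H^{k+1}_{\cts}(U_i,\Z_p(k))$ is torsion free, whence $H_k(U_i,\Z_p(-k))$ is torsion free by Proposition~\ref{prop:comp}.

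Next I would invoke the continuity of Brumer's homology, \cite[Cor.~4.3(ii)]{brum:pseudo}, applied to the inverse system $U=\varprojlim_i U_i$ with the inflated coefficient module $\Z_p(-k)$, to obtain a natural isomorphism
\begin{equation*}
H_k(U,\Z_p(-k))\simeq\varprojlim_i H_k(U_i,\Z_p(-k)).
\end{equation*}
An inverse limit of torsion-free $\Z_p$-modules embeds into their direct product and is therefore again torsion free, so $H_k(U,\Z_p(-k))$ is torsion free. As $U$ and $k$ were arbitrary, Proposition~\ref{prop:comp} together with Proposition~\ref{fact:p torsion} gives that $\theta$ is cyclotomic.

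The main obstacle is the continuity input: one must verify that the hypotheses of \cite[Cor.~4.3(ii)]{brum:pseudo} genuinely apply, i.e. that $\Z_p(-k)$ is a pseudocompact $\Z_p\dbl U\dbr$-module inflated along the surjections $U\to U_i$ and that the transition maps are surjective, so that the $\varprojlim^1$ obstruction is forced to vanish (as it does for inverse systems of profinite modules). A second, more elementary point that must not be skipped is the identification $U=\varprojlim_iU_i$ for an \emph{arbitrary} open subgroup $U$; it rests only on $\bigcap_iN_i=\triv$, but it is precisely what lets the argument reach all open subgroups rather than merely $G$ itself. I would also note that a dual route is available, replacing homology by cohomology with the discrete coefficients $\II_p(k)$ and the inverse limit by the direct limit $H^k(U,\II_p(k))\simeq\varinjlim_i H^k(U_i,\II_p(k))$, together with the fact that a direct limit of divisible groups is divisible; this uses the same device as the proof of Corollary~\ref{cor:comp}.
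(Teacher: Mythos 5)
Your proposal is correct and follows essentially the same route as the paper: the paper's proof likewise fixes an open $U\subseteq G$, uses Proposition~\ref{prop:comp} to convert cyclotomicity of each $(G/N_i,\theta_i)$ into torsion-freeness of $H_m(UN_i/N_i,\Z_p(-m))$, applies \cite[Cor.~4.3(ii)]{brum:pseudo} to deduce that $H_m(U,\Z_p(-m))$ is torsion free, and concludes via Proposition~\ref{prop:comp} again. Your additional verifications (that $U=\varprojlim_i UN_i/N_i$ with surjective transition maps, that $\Z_p(-k)$ is inflated, and that an inverse limit of torsion-free $\Z_p$-modules is torsion free) are exactly the details the paper leaves implicit when invoking Brumer's continuity result.
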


\begin{proof}
Let $U\subseteq G$ be a open subgroup of $G$.
Hypothesis (iii) implies that the group $H_m(UN_i/N_i,\Z_p(-m))$ is torsion free 
for all $i\in I$ (cf. Proposition~\ref{prop:comp}). Thus, by
\cite[Corollary~4.3(ii)]{brum:pseudo}, $H_m(U,\Z_p(-m))$ is torsion free,
and hence, by Proposition~\ref{prop:comp}, 
$\theta\colon G\to\Z_p^\times$ is a cyclotomic $p$-orientation.
\end{proof}

%%%%%%%%%%%%%%%%%%%%%%%%%%%%%%%%%%%%%%%%%%%%%%%%%%%%%%%%%%%%%%%%%%%%%$$$$$$$$$$$

\subsection{The mod-$p$ cohomology ring}

\label{ss:modpco}
An $\N_0$-graded $\F_p$-algebra $\bA=\coprod_{k\geq 0}\bA_k$ is said to be
{\sl anti-commutative} if for $x\in\bA_s$ and $y\in\bA_t$ one has $y\cdot x=(-1)^{st}\cdot x\cdot y$.
E.g., if $V$ is an $\F_p$-vector space, the {\sl exterior algebra}
$\Lambda_\bullet(V)$ (cf. \cite[Chapter~4]{mcl:hom}) is an $\N_0$-graded anti-commutative $\F_p$-algebra.
Moreover, if $G$ is a profinite group, then its cohomology ring $H^\bullet(G,\F_p)$
is an $\N_0$-graded anti-commutative $\F_p$-algebra (cf. \cite[Prop.~1.4.4]{nsw:cohn}).

Let $\boT(V)=\coprod_{k\geq 0}V^{\otimes k}$ denote the {\sl tensor algebra} generated by the $\F_p$-vector space $V$.
A $\N_0$-graded associative $\F_p$-algebra $\bA$ is said to be {\sl quadratic} if the 
canonical homomorphism $\eta^{\bA}\colon \boT(\bA_1)\to\bA$ is surjective, and 
\begin{equation}
\label{eq:quad}
\kernel(\eta^{\bA})=\boT(\bA_1)\otimes
\kernel(\eta_2^\bA)\otimes\boT(\bA_1)
\end{equation}
(cf. \cite[\S~1.2]{pp:quad}).
E.g., $\bA=\Lambda_\bullet(V)$ is quadratic.

If $\bA$ and $\bB$ are anti-commutative $\N_0$-graded $\F_p$-algebras,
then $\bA\otimes\bB$ is again an anti-commutative $\N_0$-graded $\F_p$-algebra,
where 
\begin{equation}
\label{eq:cori2}
(x_1\otimes y_1)\cdot(x_2\otimes y_2)=(-1)^{s_2t_1}\cdot 
(x_1\cdot x_2)\otimes (y_1\cdot y_2),
\end{equation}
for $x_1\in\bA_{s_1},\,x_2\in\bA_{s_2}\,y_1\in\bB_{t_1},\,y_2\in\bB_{t_2}$.
In particular, if $\bA$ and $\bB$ are quadratic, then $\bA\otimes\bB$ is quadratic
as well.

A direct set $(I,\preceq)$ maybe considered as a small category
with objects given by the set $I$ and precisely one morphism
$\iota_{i,j}$ for all $i\preceq j$, $i,j\in I$, i.e., $\iota_{i,i}=\iid_i$.
One has the following.

\begin{fact}
\label{prop:dirlimquad}
Let $\F$ be a field, let $(I,\preceq)$ be a direct system, and let $\bA\colon (I,\preceq)\to\FQAlg$ be a covariant functor
with values in the category of quadratic $\F$-algebras. 
Then $\bB=\varinjlim_{i\in \bA}\bA(i)$ is a quadratic
$\F$-algebra.
\end{fact}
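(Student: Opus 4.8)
The plan is to exploit the fact that, in the category of $\F$-vector spaces, filtered colimits are exact and commute with tensor products, so that \emph{every} construction entering the definition of ``quadratic'' is compatible with passage to $\varinjlim$. Write $\phi_{ij}\colon\bA(i)\to\bA(j)$ for the graded structure homomorphisms attached to $i\preceq j$ and $\phi_i\colon\bA(i)\to\bB$ for the canonical maps into the colimit.

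First I would record that the grading survives the colimit: since each $\phi_{ij}$ is a homomorphism of graded algebras, the colimit decomposes as $\bB=\coprod_{k\geq0}\bB_k$ with $\bB_k=\varinjlim_i\bA(i)_k$; in particular $\bB_1=\varinjlim_i\bA(i)_1$. Next I would verify that the tensor algebra functor commutes with the colimit, i.e. $\boT(\bB_1)\simeq\varinjlim_i\boT(\bA(i)_1)$. This reduces to the assertion that each finite tensor power commutes with filtered colimits, which holds because $\otimes_\F$ commutes with $\varinjlim$ over a directed set. The canonical map $\eta^{\bA}\colon\boT(\bA_1)\to\bA$ is natural with respect to graded-algebra homomorphisms, so under these identifications $\eta^{\bB}=\varinjlim_i\eta^{\bA(i)}$. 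Since each $\eta^{\bA(i)}$ is surjective and filtered colimits preserve epimorphisms, $\eta^{\bB}$ is surjective; this is the first half of quadraticity.

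For the relations, I would use exactness of filtered colimits to compute $\kernel(\eta^{\bB})=\varinjlim_i\kernel(\eta^{\bA(i)})$, and likewise in degree $2$, $\kernel(\eta_2^{\bB})=\varinjlim_i\kernel(\eta_2^{\bA(i)})$. Since each $\bA(i)$ is quadratic, $\kernel(\eta^{\bA(i)})=\boT(\bA(i)_1)\otimes\kernel(\eta_2^{\bA(i)})\otimes\boT(\bA(i)_1)$, and these equalities are compatible with the structure maps (each $\phi_{ij}$ carries the degree-$2$ kernel into the degree-$2$ kernel). Applying $\varinjlim_i$ and moving the colimit inside each tensor factor and inside the kernel then yields $\kernel(\eta^{\bB})=\boT(\bB_1)\otimes\kernel(\eta_2^{\bB})\otimes\boT(\bB_1)$, which is precisely the quadratic relation for $\bB$.

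The hard part will be the bookkeeping that justifies interchanging $\varinjlim_i$ with the threefold tensor product $\boT(\bA(i)_1)\otimes\kernel(\eta_2^{\bA(i)})\otimes\boT(\bA(i)_1)$ and, simultaneously, with $\kernel(\cdot)$. Over a field every module is flat, so $\otimes_\F$ is exact and commutes with $\varinjlim$; combined with the $\mathrm{AB}5$ exactness of filtered colimits of $\F$-vector spaces, each such interchange is legitimate, and the naturality of all the structure maps guarantees that the resulting isomorphisms are the canonical ones. Once these commutations are in place the conclusion is immediate.
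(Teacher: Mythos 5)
Your proof is correct, and since the paper states this result as a \emph{Fact} with no proof given, your verification via the AB5 exactness of filtered colimits of $\F$-vector spaces and their commutation with $\otimes_\F$ (hence with $\boT(\cdot)$, with kernels, and with the threefold tensor expression) is precisely the routine argument the authors leave to the reader. One cosmetic remark: in the paper's convention the right-hand side of \eqref{eq:quad} really denotes the two-sided ideal, i.e.\ the image of $\boT(\bA_1)\otimes\kernel(\eta_2^{\bA})\otimes\boT(\bA_1)$ under multiplication in $\boT(\bA_1)$, so your interchange of $\varinjlim$ with this expression also implicitly uses that images commute with filtered colimits --- which likewise holds in an AB5 category, so nothing is lost.
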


%\begin{proof}
%By hypothesis, one has a short exact sequence of $\F$-vector spaces
%\begin{equation}
%\label{eq:dirquad1}
%\xymatrix@C=0.4cm{
%0\ar[r]&\boT(\bA_1(i))\otimes R(i)\otimes
%\boT(\bA_1(i))
%\ar[r]&\boT(\bA_1(i))\ar[r]&
%\bA(i)\ar[r]&0
%}
%\end{equation}
%for all $i\in I$, where $R(i)=\kernel(\eta_2^{\bA(i)})\subseteq \bA_1(i)\otimes\bA_1(i)$.
%Here we considered the left-hand side of \eqref{eq:dirquad1} as the ideal generated by
%$R(i)$. In particular,
%\begin{equation}
%\label{eq:dirquad2}
%\xymatrix{
%0\ar[r]&R(i)
%\ar[r]&\bA_1(i)\otimes\bA_1(i)\ar[r]&
%\bA_2(i)\ar[r]&0 }
%\end{equation}
%is exact. Moreover, $\varinjlim_{i\in I}$ is an exact functor commuting with 
%tensor products (in both arguments) and thus also for $\boT_k(\argu)$ for all $k\geq 1$.
%Hence it commutes also with the tensor algebra functor $\boT(\argu)$.
%Thus, applying $\varinjlim_{i\in I}$ to the exact sequences \eqref{eq:dirquad2} yields the exact sequence
%\begin{equation}
%\label{eq:dirquad3}
%\xymatrix{
%0\ar[r]&\boT(\bB)\otimes R\otimes
%\boT(B)
%\ar[r]&\boT(\bB_1)\ar[r]&
%\bB\ar[r]&0
%}
%\end{equation}
%for $R=\varinjlim_{i\in I} R(i)\subseteq \bB_1\otimes\bB_1$, and thus the claim.
%\end{proof}

Let $(G,\theta)$ be a $p$-oriented profinite group, and let $\htheta\colon G\to\F_p^\times$
be the map induced by $\theta$. If $\htheta=\eqo_G$, then the {\sl mod-$p$ cohomology ring}
of $H^\bullet(G,\htheta)$ coincides with $H^\bullet(G,\F_p)$ (see \eqref{eq:H}),
and hence it is anti-commutative. Furthermore, if 
$\htheta\not=\eqo_G$ and $G^\circ=\kernel(\htheta)$, restriction 
\begin{equation}\label{eq:rst_virtual}
 \rst^\bullet_{G,G^\circ}\colon H^\bullet(G,\hat\theta)\longrightarrow H^\bullet(G^\circ,\F_p)
\end{equation}
is an injective homomorphism of $\N_0$-graded algebras.
Hence the mod-$p$ cohomology ring $H^\bullet(G,\theta)$ is
anti-commutative. In particular, if $M_{(k)}$ denotes
the homogeneous component of the left $\F_p[G/G^\circ]$-module $M$,
on which $G/G^\circ$ acts by $\htheta^k$, 
the Hochschild-Serre spectral sequence (cf. \cite[\S~II.4, Exercise~4(ii)]{nsw:cohn}) shows that
\begin{equation}
\label{eq:cori5}
H^k(G,\htheta)=H^k(G^\circ,\F_p)_{(-k)}.
\end{equation}
From \cite[\S I.2.2, Prop.~8]{ser:gal} and Fact~\ref{prop:dirlimquad} one concludes the following.

\begin{cor}
\label{cor:Bkop}
Let $(G,\theta)$ be a $p$-oriented profinite group which is Bloch-Kato.
Then $H^\bullet(C,\htheta\vert_C)$ is quadratic for all $C$ closed in $G$.
\end{cor}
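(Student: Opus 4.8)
The plan is to realize $H^\bullet(C,\htheta\vert_C)$ as a direct limit of the quadratic cohomology algebras attached to the open subgroups containing $C$, and then to invoke Fact~\ref{prop:dirlimquad}. First I would introduce the set $\euB_C$ of all open subgroups $U\subseteq G$ with $C\subseteq U$, partially ordered by reverse inclusion, i.e.\ $U\preceq U'$ whenever $U\supseteq U'$. This makes $(\euB_C,\preceq)$ a directed set, since for $U_1,U_2\in\euB_C$ the intersection $U_1\cap U_2$ is again open, contains $C$, and dominates both $U_1$ and $U_2$ in this ordering.

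Next I would assemble the functor. Since $(G,\theta)$ is Bloch-Kato, for each $U\in\euB_C$ the $\F_p$-algebra $H^\bullet(U,\htheta\vert_U)=\coprod_{k\geq0}H^k(U,\F_p(k))$ is quadratic by definition. For $U\preceq U'$ the restriction maps $\rst^\bullet_{U,U'}\colon H^\bullet(U,\htheta\vert_U)\to H^\bullet(U',\htheta\vert_{U'})$ are homomorphisms of $\N_0$-graded $\F_p$-algebras, because restriction commutes with the cup product and with the coefficient pairings $\F_p(s)\otimes\F_p(t)\to\F_p(s+t)$. Thus $U\mapsto H^\bullet(U,\htheta\vert_U)$ defines a covariant functor $\bA\colon(\euB_C,\preceq)\to\FQAlg$ with values in quadratic $\F_p$-algebras.

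The key identification is then that $H^\bullet(C,\htheta\vert_C)\simeq\varinjlim_{U\in\euB_C}\bA(U)$ as $\N_0$-graded $\F_p$-algebras. Degreewise this is exactly Serre's direct-limit formula $H^k(C,\F_p(k))\simeq\varinjlim_{U}H^k(U,\F_p(k))$ (cf.\ \cite[\S I.2.2, Prop.~8]{ser:gal}), whose transition maps are precisely the restriction maps above. Once one knows the colimit is computed along these algebra homomorphisms, the cup product on $H^\bullet(C,\htheta\vert_C)$ coincides with the one induced on the colimit, so the graded-vector-space isomorphism upgrades to an isomorphism of graded algebras. Applying Fact~\ref{prop:dirlimquad} to the functor $\bA$ then shows that $\varinjlim_{U}\bA(U)$, and hence $H^\bullet(C,\htheta\vert_C)$, is quadratic.

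The step that deserves the most care --- and the only genuine obstacle --- is the compatibility of the multiplicative structure with the colimit: Serre's proposition is typically recorded for the underlying cohomology groups only, so I would make explicit that the cup product commutes with restriction and with direct limits, ensuring that the degreewise isomorphisms respect products. With that in hand, Fact~\ref{prop:dirlimquad} delivers quadraticity immediately, and no further computation is required.
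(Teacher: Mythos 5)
Your proposal is correct and follows exactly the paper's argument: the paper proves this corollary by combining Serre's direct-limit formula \cite[\S I.2.2, Prop.~8]{ser:gal} for the cohomology of a closed subgroup with Fact~\ref{prop:dirlimquad} on direct limits of quadratic algebras, which is precisely your route. Your explicit verification that the restriction maps are graded-algebra homomorphisms and that the cup product passes to the colimit fills in details the paper leaves implicit, but introduces no new idea.
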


\begin{cor}
\label{cor:quadlim}
Let $(I,\preceq)$ be a directed set, 
let $(G,\theta)$ be a $p$-oriented profinite group,
and let $(N_i)_{i\in I}$ be a family of closed normal subgroups of $G$,
$N_j\subseteq N_i\subseteq \kernel(\theta)$ for $i\preceq j$, such that $\bigcap_{i\in I} N_i=\triv$
and $(G/N_i,\htheta_{N_i})$ is Bloch-Kato.
Then $(G,\theta)$ is Bloch-Kato.
\end{cor}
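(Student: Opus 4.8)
The plan is to realise the twisted mod-$p$ cohomology ring of $G$, and of each of its open subgroups, as a directed colimit of the corresponding rings of the quotients $G/N_i$, and then to conclude by Fact~\ref{prop:dirlimquad}. First I would fix $k\geq0$ and observe that, since $N_i\subseteq\kernel(\theta)\subseteq\kernel(\htheta)$, the finite module $\F_p(k)=\Z_p(k)/p$ carries a trivial $N_i$-action and is therefore canonically a $G/N_i$-module for every $i\in I$ and every $k\geq0$. As the $N_i$ are nested ($N_j\subseteq N_i$ for $i\preceq j$) with $\bigcap_{i\in I}N_i=\triv$, the canonical map $G\to\varprojlim_i G/N_i$ is an isomorphism of profinite groups, so Serre's continuity result \cite[\S I.2.2, Prop.~8]{ser:gal} applies with the constant coefficient system $\F_p(k)$ and yields
\begin{equation*}
H^k(G,\F_p(k))\simeq\varinjlim_i H^k(G/N_i,\F_p(k)),
\end{equation*}
the colimit being taken along the inflation maps $\mathrm{inf}\colon H^k(G/N_i,\F_p(k))\to H^k(G/N_j,\F_p(k))$ for $i\preceq j$.

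Since inflation is compatible with cup products, these isomorphisms assemble, as $k$ ranges over $\N_0$, into an isomorphism of $\N_0$-graded $\F_p$-algebras
\begin{equation*}
H^\bullet(G,\htheta)\simeq\varinjlim_i H^\bullet(G/N_i,\htheta_{N_i}),
\end{equation*}
where the colimit is now taken in the category of $\N_0$-graded $\F_p$-algebras along the covariant functor $i\mapsto H^\bullet(G/N_i,\htheta_{N_i})$ on $(I,\preceq)$. By hypothesis each $H^\bullet(G/N_i,\htheta_{N_i})$ is quadratic, whence Fact~\ref{prop:dirlimquad} forces $H^\bullet(G,\htheta)$ to be quadratic. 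To obtain Bloch-Kato-ness I must repeat this for every open subgroup $U\subseteq G$: setting $N_i^U=U\cap N_i$ one gets nested closed normal subgroups of $U$ with $\bigcap_i N_i^U=\triv$ and $N_i^U\subseteq\kernel(\htheta\vert_U)$, and $U/N_i^U\simeq UN_i/N_i$ is an open subgroup of the Bloch-Kato group $G/N_i$, hence has quadratic twisted cohomology ring. The same colimit argument applied to $U$ then gives that $H^\bullet(U,\htheta\vert_U)$ is quadratic, which is exactly the assertion that $(G,\theta)$ is Bloch-Kato.

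The cohomological bookkeeping is routine, so the point deserving care is the continuity isomorphism itself: one must check that Serre's Proposition~8 applies to the closed (not necessarily open) subgroups $N_i$, which rests on $\bigcap_i N_i=\triv$ together with the compactness of $G$ (so that every open normal subgroup of $G$ contains some $N_i$, making the system $\{G/N_i\}$ cofinal), and that the resulting identification is one of graded $\F_p$-algebras rather than merely of graded $\F_p$-vector spaces. Once these two points are settled, Fact~\ref{prop:dirlimquad} closes the argument with no further input.
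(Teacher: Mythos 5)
Your argument is correct and is precisely the paper's intended derivation: the paper presents Corollary~\ref{cor:quadlim} as an immediate consequence of the continuity of cohomology \cite[\S I.2.2, Prop.~8]{ser:gal} applied to $G\simeq\varprojlim_{i} G/N_i$ together with Fact~\ref{prop:dirlimquad}, exactly the route you take. Your additional checks --- cofinality of the system $\{G/N_i\}$, the identification being one of graded $\F_p$-algebras via compatibility of inflation with cup products, and the passage to open subgroups via $N_i^U=U\cap N_i$ with $U/N_i^U\simeq UN_i/N_i$ --- simply make explicit the details the paper leaves to the reader.
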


%\begin{proof}
%Let $U$ be an open subgroup of $G$.
%By (iii), $H^\bullet(UN_i/N_i,\htheta_{N_i})$ is quadratic, and (ii) implies that
%$H^\bullet(U,\htheta\vert_U)=\varinjlim_{i\in I} H^\bullet(UN_i/N_i,\htheta_{N_i})$,
%where the mappings are given by inflation (cf. \cite[\S I.2.2, Proposition~8]{ser:gal}).
%Hence Proposition~\ref{prop:dirlimquad} completes the proof.
%\end{proof}

\begin{rem}\label{rem:massey}
Let $G$ be a pro-$p$ group with minimal presentation
 \[
  G=\left\langle\,  x_1,\ldots,x_d\mid[x_1,x_2][[x_3,x_4],x_5]=1 \,\right\rangle,
 \]
with $d\geq5$.
In \cite[Ex.~7.3]{JanTan:massey1} and \cite[\S~4.3]{JanTan:massey2} it is shown that $G$ does not occur
as maximal pro-$p$ Galois group of a field containing a primitive $p^{th}$-root of unity,
relying on the properties of {\sl Massey products}.
It would be interesting to know whether $G$ admits a cyclotomic $p$-orientation $\theta\colon G\to\Z_p^\times$
such that $(G,\theta)$ is Bloch-Kato.
By Theorem~\ref{thmA}, a negative answer would provide a ``Massey-free'' proof of
the aforementioned fact.
\end{rem}

%%%%%%%%%%%%%%%%%%%%%%%%%%%%%%%%%%%%%%%%%%%%%%%%%%%%%%%%%%%%%%%%%%%%%%%%%%%%%%%%%%%%%%

\subsection{Fibre products}
\label{ss:fibprod}

Let $(G_1,\theta_1)$, $(G_2,\theta_2)$ be $p$-oriented profinite groups.
The {\sl fibre product} $(G,\theta)=(G_1,\theta_1)\boxtimes(G_2,\theta_2)$ denotes
the pull-back of the diagram
\begin{equation}
\label{eq:pb}
\xymatrix{
G_1\ar[r]^{\theta_1}&\Z_p^\times\\
G\ar@{-->}[u]\ar@{-->}[r]\ar@{..>}[ur]^{\theta}&G_2\ar[u]_{\theta_2}
}
\end{equation}

\begin{rem}\label{rem:fibprod}
By restricting to the suitable subgroups if necessary, 
for the analysis of a fibre product $(G,\theta)=(G_1,\theta_1)\boxtimes(G_2,\theta_2)$
one may assume that $\image(\theta_1)=\image(\theta_2)$.
In particular, if $(G_2,\theta_2)$ is split $\theta_2$-abelian and $G_2\simeq A\rtimes\image(\theta_2)$ for some free abelian pro-$p$
group $A$, then $G\simeq A\rtimes G_1$ with $gag^{-1}=a^{\theta_1(g)}$ for all $a\in A$ and $g\in G_1$.
\end{rem}

\begin{fact}
\label{fact:HI}
Let $(G,\theta)$ be a $p$-oriented profinite group, and let $N$ be a finitely generated non-trivial
torsion free closed subgroup of $\Zen_{\theta}(G)$,
i.e., $N\simeq\Z_p(1)^r$ as left $\ZpG$-modules for some $r\geq1$.
Then for $k\geq 0$ one has
\begin{equation}
\label{eq:HI}
H^1(N,\II_p(k))\simeq\II_p(k-1)^r
\end{equation}
as left $\ZpG$-module.
\end{fact}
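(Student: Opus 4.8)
The plan is to reduce \eqref{eq:HI} to a statement about continuous homomorphisms and then compute the $\ZpG$-module structure by a direct cocycle calculation. First I would note that, by the definition of the $\theta$-center, $N\subseteq\Zen_\theta(G)\subseteq O_p(\kernel(\theta))\subseteq\kernel(\theta)$, so that $\theta\vert_N$ is trivial and hence $N$ acts trivially on the coefficient module $\II_p(k)$ (whose $G$-action is via $\theta^k$). Consequently the degree-one cohomology with these discrete coefficients reduces to continuous homomorphisms,
\begin{equation*}
H^1(N,\II_p(k))\simeq\Hom_{\cts}(N,\II_p),
\end{equation*}
the coefficients on the right being $\II_p$ as an abelian group. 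Since $N$ is finitely generated, torsion free and abelian pro-$p$ of rank $r$, it is isomorphic to $\Z_p^r$ as a topological group, and Pontryagin duality (together with $\Hom_{\cts}(\Z_p,\II_p)\simeq\II_p$) gives $\Hom_{\cts}(N,\II_p)\simeq\II_p^r$ as abelian groups. This already identifies the underlying abelian group of the right-hand side of \eqref{eq:HI}; the real content of the statement is the $\ZpG$-module structure.

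For the module structure I would use that $N$ is normal in $G$, so that $G$ acts on $H^1(N,\II_p(k))$ (the action factoring through $G/N$), and describe this action on homomorphisms by the standard formula $(g\cdot f)(n)=g\cdot f(g^{-1}ng)$ for $g\in G$, $n\in N$ and $f\in\Hom_{\cts}(N,\II_p(k))$. There are two twists to combine. The conjugation action of $G$ on $N$ is governed by the defining relation of the $\theta$-center, $g^{-1}ng=n^{\theta(g)^{-1}}$, and since a continuous homomorphism out of the $\Z_p$-module $N$ is automatically $\Z_p$-linear, this yields $f(g^{-1}ng)=\theta(g)^{-1}f(n)$. Applying the coefficient action $g\cdot m=\theta(g)^{k}m$ on $\II_p(k)$ then gives
\begin{equation*}
(g\cdot f)(n)=\theta(g)^{k}\cdot\theta(g)^{-1}f(n)=\theta(g)^{k-1}f(n),
\end{equation*}
so $g$ acts on $\Hom_{\cts}(N,\II_p(k))$ by multiplication by $\theta(g)^{k-1}$. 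As $\Hom_{\cts}(N,-)$ is additive and $N\simeq\Z_p(1)^r$, this twist is uniform across the $r$ factors, and the resulting $G$-action is exactly the one defining $\II_p(k-1)$. Hence $H^1(N,\II_p(k))\simeq\II_p(k-1)^r$ as left $\ZpG$-modules.

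The routine points here are the reduction to $\Hom_{\cts}$ and the abelian-group computation via Pontryagin duality. The one place that requires care — and the main, though modest, obstacle — is getting the exponent right in the $G$-action: one must correctly combine the factor $\theta(g)^{-1}$ produced by conjugating $n$ into $g^{-1}ng=n^{\theta(g)^{-1}}$ (together with the $\Z_p$-linearity of $f$) with the factor $\theta(g)^{k}$ coming from the coefficient module, so that the net twist is $\theta^{k-1}$ rather than, say, $\theta^{k+1}$.
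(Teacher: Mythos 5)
The paper states this Fact without any proof, and your argument supplies exactly the routine verification that is being left to the reader: your reduction $H^1(N,\II_p(k))\simeq\Hom_{\cts}(N,\II_p(k))$ via triviality of the $N$-action (valid since $N\subseteq\Zen_\theta(G)\subseteq\kernel(\theta)$, which also makes $N$ normal in $G$ because closed subgroups of $\Zen_\theta(G)$ are $\Z_p$-submodules stable under $n\mapsto n^{\theta(g)}$), together with the equivariance computation combining $g^{-1}ng=n^{\theta(g)^{-1}}$, the $\Z_p$-linearity of continuous homomorphisms out of $N\simeq\Z_p^r$, and the coefficient twist $\theta^k$, correctly yields the net twist $\theta^{k-1}$ and hence $\II_p(k-1)^r$. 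The proof is correct, with no gaps.
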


%\begin{proof}
%Since $N\subseteq \kernel(\theta)$, $\II_p(k)$ is a trivial left $\Z_p\dbl N\dbr$-module.
%For $g\in G$, $h\in N$ and $f\in H^1(N,\II_p)=\Hom(N,\II_p)$ one has
%\begin{equation}
%\label{eq:HI2}
%(g\cdot f)(h)=g\cdot f(g^{-1}hg)=\theta(g)^k\cdot f\left(h^{\theta(g)^{-1}}\right)=\theta(g)^{k-1}\cdot f(h),
%\end{equation}
%and this yields the claim.
%\end{proof}

The following property will be useful for the analysis of fibre products.

\begin{lem}
\label{lem:splGr}
Let $(G_1,\theta)$ be a cyclotomically $p$-oriented profinite group, and set
$(G,\theta)=(G_1,\theta_1)\boxtimes (G_2,\theta_2)$,
where $(G_2,\theta_2)$ is split $\theta_2$-abelian with $Z=\Zen_{\theta_2}(G_2)$.
Let $\pi\colon G\to G_1$ be the canonical projection, and let $U\subseteq G$ be an open subgroup.
Then $U\simeq (Z\cap U)\rtimes \pi(U)$.
%In particular, if $Z\cap C\not=\triv$, then $(C,\theta_C)$ has a non-trivial decomposition
%$(C,\theta_C)\simeq (\pi(C),\theta_1\vert_{\pi(C)})\boxtimes (C_2,\btheta_2)$, where $C_2=(Z\cap C)\rtimes\theta(C)$.
\end{lem}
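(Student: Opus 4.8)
The plan is to recognise the asserted decomposition as the splitting of a short exact sequence of profinite groups, to encode that splitting as the vanishing of a class in $H^2_{\cts}(H,A)$, and then to kill the class by confronting the torsion coming from the finite quotient $Z/A$ with the torsion-freeness forced by cyclotomicity. First I would make the ambient semidirect structure explicit, following Remark~\ref{rem:fibprod}. The splitting $G_2\simeq Z\rtimes\image(\theta_2)$ furnishes a continuous homomorphic section $t$ of $\theta_2$, and then $s(g)=(g,t(\theta_1(g)))$ is a continuous homomorphic section of the canonical projection $\pi\colon G\to\pi(G)=\theta_1^{-1}(\image(\theta_2))$, whose kernel is $\{1\}\times Z\cong Z$. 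Hence $G\cong Z\rtimes\Gamma$ with $\Gamma=s(\pi(G))\cong\pi(G)$; using that every $z\in Z=\Zen_{\theta_2}(G_2)$ satisfies $gzg^{-1}=z^{\theta_2(g)}$, one checks that $\Gamma$ acts on $Z$ through the scalar $\theta_1$. Now fix an open $U\subseteq G$ and set $A=Z\cap U=\kernel(\pi\vert_U)$ and $H=\pi(U)$, a closed subgroup of $G_1$. Since $Z$ is normal and $\Gamma$ acts by scalars, $A$ is an open $\Z_p\dbl H\dbr$-submodule of $Z$ on which $H$ again acts through $\theta_1$, and $Z/A$ is a finite abelian $p$-group of some exponent $p^N$. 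The assertion $U\cong(Z\cap U)\rtimes\pi(U)$ is precisely the splitting of $\triv\to A\to U\to H\to\triv$.

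This extension is classified by a class $\epsilon\in H^2_{\cts}(H,A)$, and I would compute it via the inclusion $U\subseteq\pi^{-1}(H)=Z\rtimes s(H)$. Choosing for each $h\in H$ an element of $U$ lying over $h$ and recording its $Z$-coordinate modulo $A$ produces a continuous $1$-cocycle $c\colon H\to Z/A$, and a direct comparison of sections shows $\epsilon=\delta([c])$, where $\delta\colon H^1(H,Z/A)\to H^2_{\cts}(H,A)$ is the connecting map attached to $0\to A\to Z\to Z/A\to0$. As $Z/A$ is annihilated by $p^N$, so is $H^1(H,Z/A)$, and therefore $\epsilon$ is $p^N$-torsion.

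It remains to prove that $H^2_{\cts}(H,A)$ is torsion free, for then the $p^N$-torsion class $\epsilon$ vanishes, a continuous homomorphic section of $U\to H$ exists, and its image is a complement to $A$, giving the decomposition. Here cyclotomicity enters: $H$ is closed in the cyclotomically oriented $G_1$, so Corollary~\ref{cor:comp} (with $m=1$) gives that $H^1(H,\II_p(1))$ is divisible, equivalently, by Proposition~\ref{prop:comp}, that $H^2_{\cts}(H,\Z_p(1))$ is torsion free. As a $\Z_p\dbl H\dbr$-module $A$ is torsion free with $H$ acting by the scalar $\theta_1$, i.e. a free pro-$p$ $\Z_p(1)$-module. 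When $Z$ is topologically finitely generated this finishes matters at once, since then $A\cong\Z_p(1)^r$ and $H^2_{\cts}(H,A)\cong H^2_{\cts}(H,\Z_p(1))^r$ is torsion free. The one genuinely delicate point — which I expect to be the main obstacle — is to propagate torsion-freeness from the rank-one module $\Z_p(1)$ to an arbitrary free $Z$ of infinite rank; the natural route is to apply the divisibility criterion of Proposition~\ref{prop:comp} to $A$ itself, using that $H^1$ of the discrete divisible module $(A\otimes_{\Z_p}\Q_p)/A$ is a direct limit of copies of the divisible group $H^1(H,\II_p(1))$ and hence divisible. Granting this, $\epsilon=0$ and $U\cong(Z\cap U)\rtimes\pi(U)$.
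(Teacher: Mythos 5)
Your overall architecture is correct, and for $Z$ of finite rank your argument is in substance the paper's own proof in different packaging. The paper normalizes to $Z\simeq\Z_p$, writes $Z\cap U=Z^{p^k}$, extracts from the splitting of $G$ a continuous function $\eta\colon G\to Z$ whose reduction is a continuous crossed homomorphism $\bar U\to Z/Z^{p^k}$ (your cocycle $c$), uses $1$-cyclotomicity to make $H^1_{\cts}(\bar U,\Z_p(1))\to H^1(\bar U,\Z_p(1)/p^k)$ surjective, and then lifts the crossed homomorphism at the \emph{cocycle} level via the snake lemma, writing down the complement $U_1=\{\eta_\circ(\bar h)\sigma(\bar h)\}$ explicitly. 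Your statement $\epsilon=\delta([c])=0$ is exactly the exactness assertion hiding behind that lift: torsion-freeness of $H^2_{\cts}(\pi(U),\Z_p(1))$ kills the connecting map on the $p^N$-torsion group $H^1(\pi(U),Z/A)$, which is the same long exact sequence for $0\to A\to Z\to Z/A\to 0$. One mild gain of your formulation is that it works verbatim when $\pi(U)$ is merely closed in $G_1$ (via Corollary~\ref{cor:comp} and Proposition~\ref{prop:comp}), whereas the paper tacitly normalizes $\image(\theta_1)=\image(\theta_2)$ so that $\bar U$ is open; your verifications of the semidirect structure, of the scalar action of $H$ on $A$, and of $\epsilon=\delta([c])$ are all sound.

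The genuine gap is exactly where you expected it, but your proposed repair fails. When $Z$ has infinite rank, $(A\otimes_{\Z_p}\Q_p)/A$ is \emph{not} discrete: one has $p^{-n}A/A\cong A/p^nA\cong(\Z/p^n)^X$, an infinite compact group, and the union $\bigcup_n p^{-n}A/A$ strictly contains $\bigoplus_X\II_p(1)$ (consider $p^{-1}(1,1,1,\dots)+A$, all of whose coordinates are nonzero). So the topological quotient is not a direct limit of copies of $\II_p(1)$ in the category of discrete modules, the colimit computation of $H^1$ does not compute the continuous cochain cohomology of that quotient, and Proposition~\ref{prop:comp} is stated (and proved, via Tate) only for the rank-one coefficients $\Z_p(k)$, $\II_p(k)$; it does not apply to $A$ itself as you invoke it. The patch is cheap, though, and avoids $\II_p$ entirely: a torsion-free abelian pro-$p$ group is a direct \emph{product} of copies of $\Z_p$ (its Pontryagin dual is a divisible $p$-torsion group, hence a direct sum of Pr\"ufer groups), so $A\cong\prod_{y\in Y}\Z_p(1)$ as profinite $\Z_p\dbl H\dbr$-modules with scalar action; since a continuous cochain with values in a product of profinite modules is the same as a family of continuous cochains, $H^2_{\cts}(H,A)\cong\prod_{y\in Y}H^2_{\cts}(H,\Z_p(1))$, which is torsion free, and your proof closes. (Alternatively one can reduce to finite rank by an inverse-limit argument as in the proof of Theorem~\ref{thm:cycfib}; note that the paper's own proof of the lemma simply declares ``without loss of generality $Z\simeq\Z_p$'' and leaves this reduction implicit.)
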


\begin{proof}
Without loss of generality we may assume that $Z\simeq\Z_p$, so that $Z\cap U=Z^{p^k}$ for some $k\geq0$.
It suffices to show that there exists an open subgroup $U_1$ of $U$ satisfying
$Z\cap U_1=\triv$ and $\pi(U_1)=\pi(U)$.

By choosing a section $\sigma\colon G_1\to G$ (see Remark~\ref{rem:fibprod}), one has a continuous 
homomorphism $\tau=\sigma\circ\pi\colon G\to G_1$ and a continuous function
$\eta\colon G\to Z$ such that each
$g\in G$ can be uniquely written as $g=\eta(g)\cdot\tau(g)$.
In particular, for $h,h_1,h_2\in U$ and $z\in Z\cap U=Z^{p^k}$ one has
\begin{equation}
\label{eq:spl1}
\eta(z\cdot h)=z\cdot\eta(h)\qquad
\text{and}\qquad
\eta(h_1\cdot h_2)=\eta(h_1)\cdot{}^{h_1}\eta(h_2).
\end{equation}
Let $\eta_U=\chi\circ\eta\vert_U$, where $\chi\colon Z\to Z/Z^{p^k}$ is the canonical projection.
By \eqref{eq:spl1}, $\eta_U$ defines a crossed-homomorphism $\teta_U\colon \bar U\to Z/Z^{p^k}$, where $\bar U=U/Z^{p^k}$.
As $\bar U$ is canonically isomorphic to an open subgroup of $G_1$, $(\bar U,\theta_1\vert_{\bar U})$ is
cyclotomically $p$-oriented.
(Note that $Z\simeq\Z_p(1)$ as $\Z_p\dbl U\dbr$-modules.)
Hence, $H^1_{\cts}(\bar U,\Z_p(1))\to H^1(\bar U,\Z_p(1)/p^k)$ is surjective by Proposition~\ref{fact:p torsion},
and the snake lemma applied to the commutative diagram
\begin{equation}
\label{eq:spl2}
\xymatrix@C=0.5cm{
0\ar[r]&\mathcal{B}^1(\bar U,Z)\ar[r]\ar@{->>}[d]&\mathcal{Z}^1(\bar U,Z)\ar[r]\ar[d]&H^1(\bar U,\Z_p(1))\ar[r]\ar@{->>}[d]&0\\
0\ar[r]&\mathcal{B}^1(\bar U,Z/Z^{p^k})\ar[r]&\mathcal{Z}^1(\bar U,Z/Z^{p^k})\ar[r]&H^1(\bar U,\Z_p(1)/p^k)\ar[r]&0
}
\end{equation}
where the left-side and right-side vertical arrows are surjective,
shows that $\mathcal{Z}^1(\bar U,Z)\to \mathcal{Z}^1(\bar U,Z/Z^{p^k})$ is surjective.
Thus there exists $\eta_\circ\in \mathcal{Z}^1(\bar U,Z)$ such that $\teta_U=\chi\circ\eta_\circ$.
It is straightforward to verify that
$U_1=\{\eta_\circ(\bh)\cdot\sigma(\bh)\mid \bh\in\bar U\}$
is an open subgroup of $G_1$ satisfying the requirements.
\end{proof}

\begin{thm}
\label{thm:cycfib}
Let $(G_1,\theta_1)$ be a cyclotomically $p$-oriented profinite group,
and let $(G_2,\theta_2)$ be split $\theta_2$-abelian.
Then $(G_1,\theta_1)\boxtimes(G_2,\theta_2)$ is cyclotomically $p$-oriented.
\end{thm}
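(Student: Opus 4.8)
The plan is to establish cyclotomicity of $(G,\theta)=(G_1,\theta_1)\boxtimes(G_2,\theta_2)$ through criterion (ii) of Proposition~\ref{prop:comp}: I will show that $H^m(U,\II_p(m))$ is divisible for every open subgroup $U\subseteq G$ and every $m\geq 0$, which by Proposition~\ref{fact:p torsion} and Proposition~\ref{prop:comp} is exactly the $k$-cyclotomicity condition \eqref{eq:cyc} on $U$. The starting point is Lemma~\ref{lem:splGr}: writing $Z=\Zen_{\theta_2}(G_2)$ and $\pi\colon G\to G_1$ for the projection, every open $U$ splits as a semidirect product $U\simeq N\rtimes H$ with $N=Z\cap U$ and $H=\pi(U)$. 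Here $N$ is an open subgroup of the free abelian pro-$p$ group $Z$, hence $N\simeq\Z_p(1)^r$ as a $\Z_p\dbl U\dbr$-module with $N\subseteq\Zen_\theta(U)\subseteq\kernel(\theta)$, while $H=\pi(U)$ is a closed subgroup of the cyclotomically oriented $G_1$ and is therefore itself cyclotomically oriented by Corollary~\ref{cor:comp} (Remark~\ref{rem:fibprod} handles the bookkeeping allowing $\pi$ to be taken surjective).

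Thus the theorem reduces to the following assertion, which I would prove by induction on $r$: for every cyclotomically $p$-oriented profinite group $(H,\vartheta)$ and every $r\geq 0$, the group $W=\Z_p(1)^r\rtimes H$ — with $H$ acting by $g a g^{-1}=a^{\vartheta(g)}$ and the orientation $\theta_W$ trivial on the $\Z_p(1)^r$-factor — satisfies that $H^m(W,\II_p(m))$ is divisible for all $m$. The base case $r=0$ is Corollary~\ref{cor:comp} applied to $(H,\vartheta)$. For the inductive step, split off a rank-one $\Z_p(1)$-summand $N_1$ of $A:=\Z_p(1)^r$; since $H$ acts on $A$ by the single scalar $\vartheta$, any $\Z_p$-module splitting $A=N_1\oplus A'$ is automatically $H$-stable, $N_1\trianglelefteq W$, and $Q:=W/N_1\simeq A'\rtimes H\simeq\Z_p(1)^{r-1}\rtimes H$. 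Crucially, the extension $1\to N_1\to W\to Q\to 1$ is split, the section being the evident inclusion $A'\rtimes H\hookrightarrow A\rtimes H=W$.

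I would then run the Hochschild–Serre spectral sequence $E_2^{s,t}=H^s(Q,H^t(N_1,\II_p(m)))\Rightarrow H^{s+t}(W,\II_p(m))$. As $N_1\simeq\Z_p(1)$ has $\ccd_p(N_1)=1$ and acts trivially on $\II_p(m)$, only the rows $t=0,1$ survive, with $H^0(N_1,\II_p(m))=\II_p(m)$ and, by Fact~\ref{fact:HI} with $r=1$, $H^1(N_1,\II_p(m))\simeq\II_p(m-1)$ as $Q$-modules. The splitting makes the inflation maps $H^s(Q,\II_p(m))\to H^s(W,\II_p(m))$ (split) injective, which forces every differential $d_2\colon E_2^{s-2,1}\to E_2^{s,0}$ into the bottom row to vanish; having only two nonzero rows, the sequence then degenerates at $E_2$ and yields
\begin{equation*}
0\longrightarrow H^m(Q,\II_p(m))\longrightarrow H^m(W,\II_p(m))\longrightarrow H^{m-1}(Q,\II_p(m-1))\longrightarrow 0.
\end{equation*}
Both outer terms are diagonal cohomology groups of $Q\simeq\Z_p(1)^{r-1}\rtimes H$, hence divisible by the induction hypothesis, and an extension of a divisible $\Z_p$-module by a divisible one is again divisible; therefore $H^m(W,\II_p(m))$ is divisible, completing the induction and thereby the proof.

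The main obstacle — and the reason the splitting is indispensable — is that the top-row contribution $E_\infty^{m-1,1}$ is a priori only a \emph{subobject} of the divisible group $H^{m-1}(Q,\II_p(m-1))$, and submodules of divisible modules need not be divisible. It is precisely the vanishing of $d_2$ forced by the split extension that identifies this term with the full divisible group and lets divisibility propagate through the extension. The only remaining points requiring care are the verification that $N\simeq\Z_p(1)^r$ genuinely lies in $\Zen_\theta(U)$ (so that Fact~\ref{fact:HI} applies) and the routine check that the orientation descends compatibly along $W\to Q$.
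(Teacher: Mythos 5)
Your reduction and your finite-rank induction are essentially the paper's own argument, repackaged: Lemma~\ref{lem:splGr} to split open subgroups of the fibre product, the two-row Hochschild--Serre spectral sequence whose differentials $d_2$ into the bottom row vanish because the extension splits and the coefficients $\II_p(m)$ are inflated from the quotient, Fact~\ref{fact:HI} to identify the top row as $\II_p(m-1)$, and divisibility propagated through the resulting short exact sequence. (The paper instead splits that sequence using injectivity of divisible $p$-torsion $\Z_p$-modules; your direct observation that an extension of a divisible module by a divisible module is divisible is equally valid, and marginally more elementary. Your use of Corollary~\ref{cor:comp} to see that $\pi(U)$ is cyclotomically oriented is also correct, since open subgroups of a closed subgroup of $G_1$ are again closed in $G_1$.)

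There is, however, a genuine gap: nothing in the definition of a split $\theta_2$-abelian group makes $Z=\Zen_{\theta_2}(G_2)$ finitely generated, and your proof only covers $\rk(Z)<\infty$. Your step ``$N$ is an open subgroup of the free abelian pro-$p$ group $Z$, hence $N\simeq\Z_p(1)^r$'' is true only with $r$ a possibly infinite cardinal, at which point (i) your induction on $r$ does not apply, and (ii) Fact~\ref{fact:HI}, which you invoke for the top row of the spectral sequence, is stated only for finitely generated $N$. This is not cosmetic: a transfinite version of your induction would break down at limit stages, because an infinite power $\Z_p(1)^r$ cannot be exhausted by splitting off rank-one direct summands --- one genuinely needs a limit argument. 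The paper closes exactly this case at the end of its proof: it writes $Z=\varprojlim_{i\in I}Z/Z_i$ with each $Z/Z_i$ torsion free of finite rank (every closed subgroup $Z_i\subseteq Z$ is automatically normal in $G$, since $G$ acts on $Z$ through the scalars $\theta$), observes that $(G/Z_i,\btheta)\simeq(G_1,\theta_1)\boxtimes(G_2/Z_i,\btheta_2)$ is cyclotomically $p$-oriented by the finite-rank case, and then invokes Corollary~\ref{cor:comp2}, which rests on Brumer's theorem \cite[Cor.~4.3(ii)]{brum:pseudo} that continuous homology behaves well under such inverse limits. Appending this inverse-limit step makes your argument a complete proof; as written, it establishes the theorem only under the additional hypothesis $\rk\bigl(\Zen_{\theta_2}(G_2)\bigr)<\infty$.
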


\begin{rem}
\label{rem:thetab}
{\rm (a)} If $p$ is odd, then every $\theta$-abelian profinite group $(G,\theta)$ is split.
However, a 2-oriented $\theta$-abelian profinite group $(G,\theta)$ is split
 if, and only if, it is cyclotomically $2$-oriented (cf. Proposition~\ref{prop:orC2}).
 
 \noindent
 {\rm (b)}
If $(G,\theta)$ is $\theta$-abelian and $H\subseteq G$ is a closed subgroup, then 
$(H,\theta\vert_H)$ is also $\theta$-abelian.
\end{rem}

\begin{proof}[Proof of Theorem~\ref{thm:cycfib}]
Put $(G,\theta)=(G_1,\theta_1)\boxtimes(G_2,\theta_2)$ and $Z=\Zen_{\theta_2}(G_2)$.
We may also assume that $\image(\theta_1)=\image(\theta_2)$.
As $(G_2,\theta_2)$ is split $\theta_2$-abelian, one has $G=Z\rtimes G_1$. 

We first show the claim for $Z\simeq\Z_p$.
Let $U$ be an open subgroup of $G$.
By Lemma~\ref{lem:splGr}, $(U,\theta\vert_U)\simeq(U_1,\btheta_1)\boxtimes(U_2,\btheta_2)$
where $U_1$ is isomorphic to an open subgroup of $G_1$ and $(U_2,\btheta_2)$ is 
split $\btheta_2$-abelian with $N=\kernel(\btheta_2)$ open in $Z$.
As $\ccd_p(N)=1$, one has $H^m(N,\II_p(k))=0$ for $m\geq2$ and $k\geq0$.
Therefore, the $E_2$-term of the Hochschild-Serre spectral sequence associated to the short exact sequence
of profinite groups 
\begin{equation}
\label{eq:spl4}
\xymatrix{
\triv\ar[r]&N\ar[r]& U\ar[r]& U_1\ar[r]&\triv}
\end{equation}
and evaluated on the discrete $\Z_p\dbl U\dbr$-module $\II_p(k)$, is concentrated on the first and the second row.
In particular, $d_r^{s,t}=0$ for $r\geq3$.
As \eqref{eq:spl4} splits, and as $\II_p(k)$ is inflated from $U_1$,
one has $E_2^{s,0}(\II_p(k))=E_\infty^{s,0}(\II_p(k))$ for $s\geq0$ (cf. \cite[Prop.~2.4.5]{nsw:cohn}).
Hence $d_2^{s,t}=0$ for all $s,t\geq0$, i.e., $E_2^{s,t}(\II_p(k))=E_\infty^{s,t}(\II_p(k))$, 
and the spectral sequence collapses.
Thus, using the isomorphism \eqref{eq:HI},
for every $k\geq1$ one has a short exact sequence 
\begin{equation}
\label{eq:ses HI split}
 \xymatrix@C=0.5truecm{0\ar[r] & H^k(U_1,\II_p(k))\ar[r]^-{\inf} & H^k(U,\II_p(k))\ar[r] & H^{k-1}(U_1,\II_p(k-1))\ar[r] & 0,}
\end{equation}
where the right- and left-hand side are divisible $p$-torsion modules. 
As such $\Z_p$-modules are injective, \eqref{eq:ses HI split} splits showing that
$H^k(U,\II_p(k))$ is $p$-divisible.
Therefore, by Proposition~\ref{prop:comp}, $(G,\theta)$ is cyclotomic.

Thus, by induction the claim holds for all split $\theta_2$-abelian groups $(G_2,\theta_2)$
satisfying $\rk(\Zen_{\theta_2}(G_2))<\infty$.
In general, as $Z$ is a torsion free abelian pro-$p$ group, there
exists an inverse system $(Z_i)_{i\in I}$ of closed subgroups of $Z$ such that
$Z/Z_i$ is torsion free, of finite rank, and $Z=\varprojlim_{i\in I} Z/Z_i$.
Since $Z_i$ is normal in $G$ and
$$(G/Z_i,\btheta)\simeq (G_1,\theta_1)\boxtimes (G_2/Z_i,\btheta_2)$$
is cyclotomically $p$-oriented, Corollary~\ref{cor:comp2}
yields the claim.
\end{proof}

The following theorem can be seen as a generalization of a result of A.~Wadsworth
 \cite[Thm.~3.6]{wadsworth}.

\begin{thm}
\label{thm:fibco}
Let $(G_i,\theta_i)$, $i=1,2$, be $p$-oriented profinite groups
satisfying $\image(\theta_1)=\image(\theta_2)$. Assume further that 
$(G_2,\theta_2)$ is split $\theta_2$-abelian. 
Then for $(G,\theta)=(G_1,\theta_1)\boxtimes(G_2,\theta_2)$ one has that
\begin{equation}
\label{eq:fibco1}
H^\bullet(G,\htheta)\simeq H^\bullet(G_1,\htheta_1)\otimes
\Lambda_\bullet\left((\kernel(\theta_2)/\kernel(\theta_2)^p)^\ast\right).
\end{equation}
Moreover, if $(G_1,\theta_1)$ is Bloch-Kato, then $(G,\theta)$ is Bloch-Kato.
\end{thm}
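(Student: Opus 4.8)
The plan is to realise $(G,\theta)$ as a semidirect product and to compute $H^\bullet(G,\htheta)$ via a Hochschild--Serre spectral sequence, using the splitness of $(G_2,\theta_2)$ to force degeneration. First I would normalise the setup: by Remark~\ref{rem:fibprod}, since $\image(\theta_1)=\image(\theta_2)$ and $(G_2,\theta_2)$ is split $\theta_2$-abelian with $\kernel(\theta_2)=\Zen_{\theta_2}(G_2)=Z$ a free abelian pro-$p$ group, one has $G\simeq Z\rtimes G_1$ with $gzg^{-1}=z^{\theta_1(g)}$ for $z\in Z,\ g\in G_1$. In particular $Z\subseteq\kernel(\theta)$, so $\htheta\vert_Z$ is trivial and
\[
H^\bullet(Z,\htheta\vert_Z)=H^\bullet(Z,\F_p)=\Lambda_\bullet\big((Z/Z^p)^\ast\big)=\Lambda_\bullet\big((\kernel(\theta_2)/\kernel(\theta_2)^p)^\ast\big),
\]
the exterior algebra; dualising the identification $Z\simeq\Z_p(1)^{\rk Z}$ of $\Z_p\dbl G\dbr$-modules (cf. Fact~\ref{fact:HI}) shows that its degree-$t$ component carries the $G_1$-action $\htheta_1^{-t}$.

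Next I would run, in each cohomological weight $n\geq0$, the Hochschild--Serre spectral sequence (cf. \cite[\S II.4]{nsw:cohn}) $E_2^{s,t}=H^s(G_1,H^t(Z,\F_p(n)))\Rightarrow H^{s+t}(G,\F_p(n))$ attached to $\triv\to Z\to G\to G_1\to\triv$. Since $Z$ acts trivially on $\F_p(n)$, one has $H^t(Z,\F_p(n))\simeq H^t(Z,\F_p)\otimes\F_p(n)$, on which $G_1$ acts by $\htheta_1^{-t}\cdot\htheta_1^{\,n}=\htheta_1^{\,n-t}$; hence for $s+t=n$,
\[
E_2^{s,t}\simeq H^s(G_1,\F_p(s))\otimes\Lambda^t\big((\kernel(\theta_2)/\kernel(\theta_2)^p)^\ast\big),
\]
which is exactly the weight-$n$ component of $H^\bullet(G_1,\htheta_1)\otimes\Lambda_\bullet((\kernel(\theta_2)/\kernel(\theta_2)^p)^\ast)$. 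The point of the weight bookkeeping is that the intrinsic weight $-t$ of the fibre cohomology cancels against the coefficient twist to leave precisely $\F_p(s)$, so that $E_2$ is, as a bigraded algebra, the asserted tensor product.

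I would then prove degeneration. Because the extension splits, inflation $H^\bullet(G_1,\htheta_1)\to H^\bullet(G,\htheta)$ is a split-injective algebra map, so each class in $E_2^{s,0}$ is a permanent cycle; moreover the section makes restriction $H^1(G,\F_p(n))\to H^1(Z,\F_p(n))^{G_1}$ surjective, whence the transgression $d_2\colon E_2^{0,1}\to E_2^{2,0}$ vanishes and, for degree reasons, classes in $E_2^{0,1}$ support no higher differentials. As $H^\bullet(Z,\F_p)$ is generated in degree $1$ and the spectral sequence is multiplicative, these two families of permanent cycles generate $E_2$ as an algebra, so the Leibniz rule forces $d_r=0$ for all $r\geq2$; thus $E_2=E_\infty$ and $\gr H^\bullet(G,\htheta)\simeq H^\bullet(G_1,\htheta_1)\otimes\Lambda_\bullet((\kernel(\theta_2)/\kernel(\theta_2)^p)^\ast)$ as graded algebras. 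To promote this to \eqref{eq:fibco1} on the nose I would define $\Phi=\inf\otimes\ell$, where $\ell$ sends a basis of $(\kernel(\theta_2)/\kernel(\theta_2)^p)^\ast=H^1(Z,\F_p(1))^{G_1}$ to lifts $\tilde y\in H^1(G,\F_p(1))$ (normalised into the kernel of the section map), extended multiplicatively; $\Phi$ induces the identification above on the associated graded and is therefore an isomorphism of filtered, hence graded, algebras, provided $\ell$ is well defined. Here lies the main obstacle. Anticommutativity of $H^\bullet(G,\htheta)$ gives $\tilde y_i\tilde y_j=-\tilde y_j\tilde y_i$ for free, so the only relation to verify is $\tilde y^2=0$. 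For $p$ odd this is automatic, as an odd-degree class in an anticommutative $\F_p$-algebra squares to zero. For $p=2$ it is genuinely delicate: $\tilde y^2$ restricts to $y^2=0$ on $Z$ and to $0$ on the section, yet a priori has a nonzero component in $E_\infty^{1,1}$, and its vanishing must be extracted from the split $\theta_2$-abelian structure (equivalently from the divisibility of the Demushkin-type relation encoding the $\image(\theta_2)$-action on $Z$; cf. Remark~\ref{rem:kb2}). This secondary cup-square computation is where I expect the real work to be.

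Finally, for the Bloch--Kato conclusion I would verify quadraticity of $H^\bullet(U,\htheta\vert_U)$ for every open $U\subseteq G$. By Lemma~\ref{lem:splGr} such a $U$ is again a fibre product $(U_1,\theta_1\vert_{U_1})\boxtimes(U_2,\theta_2\vert_{U_2})$ with $U_1$ isomorphic to an open subgroup of $G_1$ and $(U_2,\theta_2\vert_{U_2})$ split $\theta_2$-abelian, so \eqref{eq:fibco1} applies to $U$ and presents $H^\bullet(U,\htheta\vert_U)$ as $H^\bullet(U_1,\htheta_1)\otimes\Lambda_\bullet(\cdots)$. Since $U_1$ is open in the Bloch--Kato group $G_1$, the factor $H^\bullet(U_1,\htheta_1)$ is quadratic by Corollary~\ref{cor:Bkop}, the exterior algebra is quadratic, and a tensor product of quadratic $\F_p$-algebras is quadratic (\S\ref{ss:modpco}); hence $H^\bullet(U,\htheta\vert_U)$ is quadratic and $(G,\theta)$ is Bloch--Kato.
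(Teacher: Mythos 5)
For $p$ odd your proposal is correct, and it is in substance the same argument the paper uses, carried out by hand rather than by citation: the paper disposes of the rank-one case by invoking \cite[Thm.~3.1]{wadsworth} --- whose proof is precisely your weight-adjusted Hochschild--Serre degeneration for $\triv\to Z\to G\to G_1\to\triv$ --- and then performs induction on $\dd(\Zen_{\theta_2}(G_2))$ together with an inverse-limit argument (as in Theorem~\ref{thm:cycfib} and Corollary~\ref{cor:quadlim}) for infinite rank, reductions which your single spectral-sequence computation subsumes since $H^\bullet(Z,\F_p)$ is exterior for $Z$ of arbitrary rank. Your closing paragraph (Lemma~\ref{lem:splGr}, Corollary~\ref{cor:Bkop}, tensor products of quadratic algebras) is exactly how the Bloch--Kato assertion is meant to follow; the paper does not even write it out.

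At $p=2$, however, the gap you flag is genuine, and worse than you think: you hope to extract $\tilde y^2=0$ from the split $\theta_2$-abelian structure, but this is impossible, because \eqref{eq:fibco1} is false in that generality. Take $(G_1,\theta_1)=(\Z_2,\theta_1)$ with $\image(\theta_1)=\{\pm1\}$, and let $(G_2,\theta_2)$ be the split $\theta_2$-abelian pair $\Z_2(1)\rtimes\{\pm1\}$, which is admissible by Proposition~\ref{prop:orC2}. The fibre product is then the Klein bottle pro-$2$ group of Remark~\ref{rem:kb2}, $G=\langle x,y\mid xyx^{-1}y=1\rangle$ with $Z=\overline{\langle y\rangle}\simeq\Z_2(1)$. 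Any homomorphism $\phi\colon G\to\Z/4\Z$ satisfies $2\phi(y)=0$, so the class $y^\ast\in H^1(G,\F_2)$ dual to the twisted generator does not lift modulo $4$, whence $y^\ast\cup y^\ast=\beta^1(y^\ast)\neq0$; the same holds for $y^\ast+\lambda x^\ast$ since $(x^\ast)^2=0$, so no normalisation of the lift $\tilde y$ helps (in fact $(y^\ast)^2=x^\ast y^\ast$, a twisted relation). Since every degree-one element of $H^\bullet(G_1,\htheta_1)\otimes\Lambda_\bullet\bigl((\kernel(\theta_2)/\kernel(\theta_2)^2)^\ast\bigr)$ squares to zero, the two sides of \eqref{eq:fibco1} are non-isomorphic $\F_2$-algebras; the fibre product $(C_2,\theta)\boxtimes(\Z_2(1)\rtimes\{\pm1\})$, the pro-$2$ dihedral group with ring $\F_2[x_1]\oplus\F_2[x_2]$, gives a second counterexample. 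So your ``secondary cup-square computation'' cannot be done: the correct completion of your argument needs the standing $p=2$ hypothesis used elsewhere in the paper, namely $\image(\theta_i)\subseteq1+4\Z_2$ --- then $G$ acts trivially on $Z/Z^4$, the map $zg\mapsto z\bmod Z^4$ is a cocycle lifting $\tilde y$ to $\Z_2(1)/4$-coefficients, and $\tilde y^2=\beta^1(\tilde y)=0$ (compare Fact~\ref{prop:bock}). This caveat is absent from the theorem as printed and is silently inherited from the hypotheses of the cited result of Wadsworth; as a proof of the literal statement, your proposal cannot be repaired at $p=2$, though for $p$ odd it stands.
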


\begin{proof}
Assume first that $\mathrm{d}(\Zen_{\theta_2}(G_2))$ is finite.
If $\mathrm{d}(\Zen_{\theta_2}(G_2))=1$ then one obtains the isomorphism \eqref{eq:fibco1} from \cite[Thm.~3.1]{wadsworth},
which uses the Hochschild-Serre spectral sequence associated to the short exact sequence of profinite groups 
\[\xymatrix{
\triv\ar[r]&\Zen_{\theta_2}(G_2)\ar[r]& G\ar[r]& G/\Zen_{\theta_2}(G_2)\ar[r]&\triv}
\]
and evaluated on the discrete $\Z_p\dbl G\dbr$-module $\F_p(k)$, to compute $H^\bullet(G,\htheta)$.
If $\mathrm{d}(\Zen_{\theta_2}(G_2))>1$, then applying induction on $\mathrm{d}(\Zen_{\theta_2}(G_2))$ yields the isomorphism 
\eqref{eq:fibco1}.
Finally, if $\Zen_{\theta_2}(G_2)$ is not finitely generated, then a limit argument similar to the one used in the proof Theorem~\ref{thm:cycfib} and Corollary~\ref{cor:quadlim} yield the claim.
\end{proof}

\subsection{Coproducts}

\label{ss:coproducts}
For two profinite groups $G_1$ and $G_2$ let $G=G_1\amalg G_2$ denote the {\sl coproduct} (or free product)
in the category of profinite groups (cf. \cite[\S~9.1]{ribzal:book}).
In particular, if $(G_1,\theta_1)$ and $(G_2,\theta_2)$ are two $p$-oriented profinite groups, the $p$-orientations $\theta_1$
and $\theta_2$ induce a $p$-orientation $\theta\colon G\to \Z_p^\times$ via the universal property of 
of the free product.
Thus, we may interpret $\amalg$ as the coproduct in the category of $p$-oriented profinite groups
(cf. \cite[\S3]{ido:small}). The same applies to $\ppp$ --- the coproduct in the category of pro-$p$ groups.

\begin{thm}\label{thm:freeprod}
Let $(G_1,\theta_1)$ and $(G_2,\theta_2)$ be two cyclotomically $p$-oriented profinite groups.
Then their coproduct $(G,\theta)=(G_1,\theta_1)\amalg(G_2,\theta_2)$ is cyclotomically oriented.
Moreover, if $(G_1,\theta_1)$ and $(G_2,\theta_2)$ are Bloch-Kato, then $(G,\theta)$ is Bloch-Kato.
\end{thm}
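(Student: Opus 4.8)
The plan is to establish the two assertions of Theorem~\ref{thm:freeprod} separately, treating cyclotomicity and Bloch-Kato-ness via the cohomological characterizations already developed in the excerpt. For both properties, the essential input is a Mayer-Vietoris--type decomposition of the cohomology of a free (pro-$p$ or profinite) product. The key structural fact I would invoke is that for a coproduct $G = G_1 \amalg G_2$ and a discrete (resp. profinite) $\ZpG$-module $M$, one has in positive degrees a natural isomorphism
\begin{equation*}
H^n(G, M) \simeq H^n(G_1, M\vert_{G_1}) \oplus H^n(G_2, M\vert_{G_2}), \qquad n\geq 2,
\end{equation*}
together with the exactness in degrees $0$ and $1$ governing the restriction maps (this is the standard cohomological behavior of free products, cf. \cite[Thm.~9.1.9]{ribzal:book}). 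The point is that the restriction maps $H^n(G,M)\to H^n(G_i,M)$ realize this splitting, so cohomological properties that are preserved under direct sums descend from the factors to the coproduct.

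\medskip\noindent
\emph{Cyclotomicity.} By Proposition~\ref{prop:comp}, it suffices to show that $H^m(U, \II_p(m))$ is divisible for every open subgroup $U\subseteq G$ and every $m\geq 0$. The subtlety is that an open subgroup $U$ of a free product need not itself be a free product of subgroups of the $G_i$; however, by the Kurosh subgroup theorem for profinite (resp. pro-$p$) free products (cf. \cite[Thm.~9.1.9]{ribzal:book}), $U$ decomposes as a free product of a free (pro-$p$) group together with groups of the form $U\cap gG_ig^{-1}$. Since each $(G_i,\theta_i)$ is cyclotomically oriented and cyclotomicity passes to open — indeed to all closed — subgroups via Corollary~\ref{cor:comp} and Proposition~\ref{prop:comp}, each factor in the Kurosh decomposition of $U$ is cyclotomically oriented with respect to the restricted orientation. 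The free factor contributes nothing in degree $m\geq 2$ and is harmless in degree $1$ since $\ccd_p\leq 1$ there (Fact~\ref{fact:ccd1}). Applying the Mayer-Vietoris isomorphism to $U$ and its Kurosh factors, $H^m(U,\II_p(m))$ becomes a finite direct sum of divisible modules, hence divisible, establishing cyclotomicity.

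\medskip\noindent
\emph{Bloch-Kato-ness.} Here I would show that $H^\bullet(U, \htheta\vert_U)$ is quadratic for every open subgroup $U$. By Corollary~\ref{cor:Bkop}, each $(G_i,\theta_i)$ being Bloch-Kato gives that $H^\bullet(C,\htheta\vert_C)$ is quadratic for all closed $C\subseteq G_i$. The mod-$p$ cohomology algebra of a free product decomposes as a direct sum in each positive degree, and one checks that this corresponds at the level of graded algebras to the appropriate quadratic coproduct construction (the algebra generated by $H^1(G_1)\oplus H^1(G_2)$ subject only to the relations coming from the two factors together with the vanishing of the cross cup-products). Using the Kurosh decomposition of $U$ again, together with Fact~\ref{prop:dirlimquad} to handle the direct-limit passage from open to the relevant subgroups, reduces quadraticity of $H^\bullet(U,\htheta\vert_U)$ to quadraticity of the factors, which holds by hypothesis.

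\medskip\noindent
\textbf{The main obstacle} I anticipate is the Kurosh subgroup theorem step: controlling an arbitrary open subgroup $U$ of the free product and verifying that its restricted orientation $\theta\vert_U$ decomposes compatibly with the Kurosh factors, so that each factor is genuinely cyclotomically oriented (resp. Bloch-Kato) in its own right. One must check that the restriction of the coproduct orientation to each conjugate $U\cap gG_ig^{-1}$ agrees with $\theta_i$ transported along the conjugation, and that the free factor's orientation is handled by the $\ccd_p\leq 1$ case. Once this compatibility is in place, the Mayer-Vietoris splitting reduces everything to properties already proved to be inherited by subgroups and preserved under direct sums, and the two claims follow.
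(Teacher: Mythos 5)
Your proposal is correct and follows essentially the same route as the paper's proof: the Kurosh subgroup theorem applied to an open subgroup $U$, the Mayer--Vietoris-type decomposition of \cite[Thm.~4.1.4]{nsw:cohn} combined with the divisibility criterion of Proposition~\ref{prop:comp} for cyclotomicity, and the direct sum in the category of quadratic algebras for the Bloch--Kato property. Two small points the paper makes precise: in degree $1$ one gets only an exact sequence $M\to H^1(U,\II_p(1))\to M'\to 0$ with $M$, $M'$ divisible $p$-torsion (which still forces divisibility of $H^1$), and your anticipated obstacle about the restricted orientation on the factors ${}^sG_i\cap U$ is vacuous since $\Z_p^\times$ is abelian, so $\theta$ is invariant under conjugation and the appeal to Fact~\ref{prop:dirlimquad} is unnecessary because the Kurosh decomposition of an open subgroup is finite.
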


\begin{proof}
Let $(U,\theta\vert_U)$ be an open subgroup of $(G,\theta)$.
Then, by the Kurosh subgroup theorem (cf. \cite[Thm.~9.1.9]{ribzal:book}),
\begin{equation}
 \label{eq:kurosh}
U\simeq\coprod_{s\in\mathcal{S}_1}({^s}G_1\cap U)\amalg
\coprod_{t\in\mathcal{S}_2}({^t}G_2\cap U)\amalg F,
\end{equation}
where ${^y}G_i=yG_iy^{-1}$ for $y\in G$.
The sets $\mathcal{S}_1$ and $\mathcal{S}_2$
are sets of representatives of the double cosets
$U\backslash G/G_1$ and $U\backslash G/G_2$, respectively.
In particular, the sets $\ca{S}_1$ and $\ca{S}_2$ are finite, and $F$ is a free profinite subgroup of finite rank.

Put $U_s={}^sG_1\cap U$ for all $s\in\ca{S}_1$, and $V_t={}^tG_2\cap U$ for all $t\in\ca{S}_2$.
By \cite[Thm.~4.1.4]{nsw:cohn}, one has an isomorphism
\begin{gather}
H^k(U,\II_p(k))\simeq\bigoplus_{s\in\mathcal{S}_1}H^k(U_s,\II_p(k))\oplus
\bigoplus_{t\in\mathcal{S}_2}H^k(V_t,\II_p(k)), \label{eq:freeprod1}\\
\intertext{for $k\geq 2$, and an exact sequence}
\xymatrix{  M \ar[r]^-{\alpha} & H^1(U,\II_p(1))\ar[r] & M'\ar[r] & 0.
} \label{eq:freeprod2}
\end{gather}
If $(G_1,\theta_1)$ and $(G_2,\theta_2)$ are cyclotomically $p$-oriented, 
then, by hypothesis and \eqref{eq:freeprod1}, $H^k(U,\II_p(k))$ is a divisible $p$-torsion module for $k\geq2$. 
In \eqref{eq:freeprod2}, the module $M$ is a homomorphic image of a $p$-divisible $p$-torsion module,
and the module $M'$ is the direct sum of $p$-divisible $p$-torsion modules,
showing that $H^1(U,\II_p(1))$ is divisible.
Hence, by Proposition~\ref{prop:comp} and Corollary~\ref{cor:comp2}, $(G,\theta)$ is cyclotomically $p$-oriented.

Assume that $(G_1,\theta_1)$ and $(G_2,\theta_2)$ are Bloch-Kato.
Then --- for $U$ as in \eqref{eq:kurosh} --- one has by \eqref{eq:freeprod1} and \eqref{eq:freeprod2} that
\begin{equation}
\label{eq:freeprod3}
H^\bullet(U,\htheta\vert_U)\simeq
\bA\oplus
\bigoplus_{s\in\ca{S}_1} H^\bullet(U_s,\htheta\vert_{U_s}) \oplus 
\bigoplus_{t\in\ca{S}_2} H^\bullet(V_t,\htheta\vert_{V_t}) \oplus 
H^\bullet(F,\htheta\vert_F)
\end{equation}
where $\bA$ is a quadratic algebra, and $\oplus$ denotes the {\sl direct sum} in the category of quadratic algebras
(cf. \cite[p.~55]{pp:quad}). In particular, $H^\bullet(U,\htheta\vert_U)$ is quadratic.
\end{proof} 

For pro-$p$ groups one has also the following.

\begin{thm}\label{thm:freeppp}
Let $(G_1,\theta_1)$ and $(G_2,\theta_2)$ be two cyclotomically oriented pro-$p$ groups.
Then their coproduct $(G,\theta)=(G_1,\theta_1)\ppp(G_2,\theta_2)$ is cyclotomically oriented.
Moreover, if $(G_1,\theta_1)$ and $(G_2,\theta_2)$ are Bloch-Kato, then $(G,\theta)$ is Bloch-Kato.
\end{thm}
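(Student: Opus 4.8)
The plan is to replicate the proof of Theorem~\ref{thm:freeprod} entirely within the category of pro-$p$ groups. A direct appeal to Theorem~\ref{thm:freeprod} is not available, because the pro-$p$ coproduct $G_1\ppp G_2$ is merely the maximal pro-$p$ quotient of the profinite coproduct $G_1\amalg G_2$, so that cyclotomicity of the latter says nothing about the former. Consequently the whole cohomological computation must be reproduced, using the pro-$p$ analogues of the Kurosh subgroup theorem and of the Mayer--Vietoris decomposition in place of their profinite counterparts.

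First I would fix an open subgroup $U\subseteq G=G_1\ppp G_2$ and apply the pro-$p$ version of the Kurosh subgroup theorem (the specialization of \cite[Thm.~9.1.9]{ribzal:book} to the class of finite $p$-groups) to present $U$ as a free pro-$p$ product of the factors $U_s={}^sG_1\cap U$, with $s$ ranging over a finite set $\ca{S}_1$ of representatives of $U\backslash G/G_1$, the factors $V_t={}^tG_2\cap U$, with $t\in\ca{S}_2$, and a free pro-$p$ group $F$ of finite rank. Each $U_s$ is open in the conjugate ${}^sG_1\simeq G_1$, hence cyclotomically oriented, since cyclotomicity is inherited by open subgroups by definition; the same holds for each $V_t$, while $F$ is cyclotomic by Fact~\ref{fact:ccd1} because $\ccd_p(F)\leq1$.

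Next I would evaluate the Mayer--Vietoris sequence for free pro-$p$ products (the pro-$p$ analogue of the decomposition used in Theorem~\ref{thm:freeprod} via \cite[Thm.~4.1.4]{nsw:cohn}) on the discrete $p$-torsion module $\II_p(k)$. For $k\geq2$ this yields
\begin{equation*}
H^k(U,\II_p(k))\simeq\bigoplus_{s\in\ca{S}_1}H^k(U_s,\II_p(k))\oplus\bigoplus_{t\in\ca{S}_2}H^k(V_t,\II_p(k)),
\end{equation*}
each summand being divisible by Proposition~\ref{prop:comp} applied to the cyclotomic factors, whereas for $k=1$ it produces an exact sequence presenting $H^1(U,\II_p(1))$ as an extension of divisible $p$-torsion modules, hence again divisible. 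Thus $H^k(U,\II_p(k))$ is divisible for all $k\geq1$ and all open $U$, so Proposition~\ref{prop:comp} together with Corollary~\ref{cor:comp2} shows $(G,\theta)$ to be cyclotomically oriented. For the Bloch--Kato assertion the same decomposition gives, exactly as in \eqref{eq:freeprod3}, that $H^\bullet(U,\htheta\vert_U)$ is a direct sum in the category of quadratic algebras of the algebras attached to the $U_s$, the $V_t$ and $F$, plus one further quadratic summand, and is therefore quadratic.

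The step I expect to be the main obstacle is confirming that these two structural inputs really persist in the pro-$p$ category with the twisted discrete coefficients $\II_p(k)$ and $\F_p(k)$. The Kurosh theorem and the cohomological Mayer--Vietoris decomposition are formulated for free pro-$\mathcal{C}$ products, of which free pro-$p$ products are the case where $\mathcal{C}$ is the class of finite $p$-groups, so the transfer should be routine; the delicate point is the degree-one term, where the image of the connecting map and the quadratic correction summand must be controlled precisely enough to secure both the divisibility of $H^1(U,\II_p(1))$ and the quadraticity of the associated mod-$p$ algebra, just as in the profinite proof.
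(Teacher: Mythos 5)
Your proposal is correct and follows essentially the same route as the paper: the paper's proof likewise invokes the pro-$p$ version of the Kurosh subgroup theorem (\cite[Thm.~9.1.9]{ribzal:book}) and the pro-$p$ validity of the decompositions \eqref{eq:freeprod1}--\eqref{eq:freeprod2} (\cite[Thm.~4.1.4]{nsw:cohn}), then transfers the cyclotomicity argument of Theorem~\ref{thm:freeprod} verbatim. The only cosmetic difference is that for the Bloch--Kato part the paper simply cites \cite[Thm.~5.2]{cq:BK} rather than rerunning the quadratic direct-sum decomposition \eqref{eq:freeprod3} as you do, but the underlying argument is identical.
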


\begin{proof}
The Kurosh subgroup theorem is also valid in the category of pro-$p$ groups
with $\ppp$ replacing $\amalg$ (cf. \cite[Thm.~9.1.9]{ribzal:book}),
and \eqref{eq:freeprod1} and \eqref{eq:freeprod2} hold also in this context (cf. \cite[Thm.~4.1.4]{nsw:cohn}).
Hence the proof for cyclotomicity can be transferred verbatim.
The Bloch-Kato property was already shown in \cite[Thm.~5.2]{cq:BK}.
\end{proof}

%%%%%%%%%%%%%%%%%%%%%%%%%%%%%%%%%%%%%%%%%%%%%%%%%%%%%%%%%%%%%%%%%%%%%%%%%%%%%%%%%%%%%%%%%%%%%%%%%%
%%%%%%%%%%%%%%
%%%%%%%%%%%%%%%%%%%%%%%%%%%%%%%%%%%%%%%%%%%%%%%%%%%%%%%%%%%%%%%%%%5%%%%%%%%%%%%%%%%%%%%%%%%%%%%%%%%

%%%%%%%%%%%%%%%%%%%%%%%%%%%%%%%%%%%%%%%%%%%%%%%%%%NEW
%%%%%  %%%%%%%%%%%%%%%
%%%%%%%%%%%%%%%%%%%%%%%%%%%%%%%%%%%%%

%%%%%%%%%%%%%%%%%%%%%%%%%%%%%%
%%%%%%%%%%% to be copied here %%%%%%%%%%
%%%%%%%%%%%%%%%%%%%%%%%%%%%%%%
\section{Oriented virtual pro-$p$ groups}\label{s:virtual}

We say that a $p$-oriented profinite group $(G,\theta)$ is an 
{\sl oriented virtual pro-$p$ group}
if $\kernel(\theta)$ is a pro-$p$ group.
In particular, $G$ is a virtual pro-$p$ group.
Since $\Z_2^\times$ is a pro-2 group,
every oriented virtual pro-2 group is in fact a pro-2 group.
For $p\neq 2$ let $\hat{\theta}\colon G\to{\F_p}^\times$ be the homomorphism induced by $\theta$, and put $G^\circ=\kernel(\hat{\theta})$.
Then $G/G^\circ\simeq\image(\hat{\theta})$ is a finite cyclic group of order co-prime to $p$.
The profinite version of the Schur-Zassenhaus theorem (cf. \cite[Lemma~22.10.1]{friedjarden:FA})
implies that the short exact sequence of profinite groups 
\begin{equation}\label{eq:ses virtual}
 \xymatrix@C=1.1truecm{ \{1\}\ar[r] & G^\circ\ar[r] & G\ar[r]^-{\hat{\theta}} & \image(\hat{\theta})\ar[r]\ar@/_2pc/[l]_-{\sigma} & \{1\} }
\end{equation}
splits. Indeed, if $C\subseteq G$ is a $p^\prime$-Hall subgroup of $G$, then 
$\pi\vert_C\colon C\to\image(\hat{\theta})$ is an isomorphism, and 
$\sigma=(\pi\vert_C)^{-1}$ is a canonical section for $\hat{\theta}$.

Note that $\Z_p^\times=\F_p^\times\times \Xi_p$, where $\Xi_p=O_p(\Z_p^\times)$
is the pro-$p$ Sylow subgroup of $\Z_p^\times$, and where we denoted by $\F_p^\times$ 
also the image of the Teichm\"uller section $\tau\colon\F_p^\times\to\Z_p^\times$. 
Hence a $p$-orientation $\theta\colon G\to\Z_p^\times$ on $G$ defines
a homomorphism $\hat{\theta}\colon G\to\F_p^\times$ and also a homomorphism
$\theta^\vee\colon G\to\Xi_p$. On the contrary a pair of continuous homomorphisms
$(\hat{\theta},\theta^\vee)$, where
$\hat{\theta}\colon G\to\F_p^\times$ and $\theta^\vee\colon G\to\Xi_p$, defines a
$p$-orientation $\theta\colon G\to \Z_p^\times$ given by
$\theta(g)=\hat{\theta}(g)\cdot\theta^\vee(g)$ for $g\in G$.

\begin{fact}
\label{fact:virtor}
Let $\hat{\theta}\colon G\to\F_p^\times$, $\sigma\colon \image(\hat{\theta})\to G$ be homomorphisms of groups satisfying \eqref{eq:ses virtual}. A homomorphism
$\theta^\circ\colon G^\circ\to\Xi_p$ defines a $p$-orientation $\theta\colon G\to\Z_p^\times$, provided for all 
$c\in\image(\hat{\theta})$ and for all $g\in G^\circ$ one has
\begin{equation}
\label{eq:conc1}
\theta^\circ(\sigma(c)\cdot g\cdot \sigma(c)^{-1})=\theta^\circ(g)
\end{equation}
\end{fact}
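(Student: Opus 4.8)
The plan is to build the promised orientation through its $\Xi_p$-component and then reassemble it using the decomposition $\Z_p^\times=\F_p^\times\times\Xi_p$ recorded above: once a continuous homomorphism $\theta^\vee\colon G\to\Xi_p$ restricting to $\theta^\circ$ on $G^\circ$ is in hand, the pair $(\hat\theta,\theta^\vee)$ assembles into the $p$-orientation $\theta$ given by $\theta(g)=\tau(\hat\theta(g))\cdot\theta^\vee(g)$. Since \eqref{eq:ses virtual} splits via the homomorphic section $\sigma$, every $g\in G$ has a unique expression $g=g^\circ\cdot\sigma(c)$ with $c=\hat\theta(g)\in\image(\hat\theta)$ and $g^\circ=g\cdot\sigma(c)^{-1}\in G^\circ$. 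First I would define $\theta^\vee(g)=\theta^\circ(g^\circ)$ by means of this decomposition; the assignment $g\mapsto g^\circ$ is continuous because $\hat\theta$, $\sigma$ and the group operations are, so $\theta^\vee$ is continuous, and for $g\in G^\circ$ one has $c=1$, hence $\theta^\vee\vert_{G^\circ}=\theta^\circ$.

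The key step I would verify is that $\theta^\vee$ is multiplicative, and this is exactly the point at which hypothesis \eqref{eq:conc1} enters. Writing $g_i=g_i^\circ\cdot\sigma(c_i)$ for $i=1,2$ and using that $\sigma$ is a homomorphism and $G^\circ$ is normal, one computes
\begin{equation*}
g_1g_2=g_1^\circ\cdot\bigl(\sigma(c_1)g_2^\circ\sigma(c_1)^{-1}\bigr)\cdot\sigma(c_1c_2),
\end{equation*}
so the $G^\circ$-component of $g_1g_2$ is $g_1^\circ\cdot\sigma(c_1)g_2^\circ\sigma(c_1)^{-1}$ while its $\image(\hat\theta)$-component is $c_1c_2$. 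Applying the homomorphism $\theta^\circ$ gives $\theta^\vee(g_1g_2)=\theta^\circ(g_1^\circ)\cdot\theta^\circ(\sigma(c_1)g_2^\circ\sigma(c_1)^{-1})$, and \eqref{eq:conc1} collapses the second factor to $\theta^\circ(g_2^\circ)$. Hence $\theta^\vee(g_1g_2)=\theta^\vee(g_1)\theta^\vee(g_2)$, as required.

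I do not expect a genuine obstacle: the entire content is the bookkeeping of the previous paragraph, and condition \eqref{eq:conc1} has been tailored precisely so that the cross-term $\theta^\circ(\sigma(c_1)g_2^\circ\sigma(c_1)^{-1})$ disappears. It is worth recording that this is the concrete shadow of a cohomological fact: since $\image(\hat\theta)$ is finite of order prime to $p$ while $\Xi_p$ is pro-$p$ (so multiplication by $|\image(\hat\theta)|$ is invertible on $\Xi_p$), one has $H^i(\image(\hat\theta),\Xi_p)=0$ for $i\geq1$, and the inflation-restriction sequence for \eqref{eq:ses virtual} with coefficients in $\Xi_p$ yields $\Hom_{\cts}(G,\Xi_p)\simeq\Hom_{\cts}(G^\circ,\Xi_p)^{\image(\hat\theta)}$; condition \eqref{eq:conc1} is simply the assertion that $\theta^\circ$ lies in the $\image(\hat\theta)$-invariant part, which is what makes it extend. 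The explicit construction above realizes this isomorphism by hand.
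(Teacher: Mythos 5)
Your proof is correct and takes essentially the same route as the paper's: the paper likewise uses the splitting \eqref{eq:ses virtual} to write $G=G^\circ\rtimes_\beta\bar{\Sigma}$ with $\bar{\Sigma}=\image(\hat{\theta})$, defines $\theta^\vee(g,c)=\theta^\circ(g)$, invokes \eqref{eq:conc1} to see that $\theta^\vee$ is a continuous homomorphism, and assembles $\theta$ from the pair $(\hat{\theta},\theta^\vee)$ via the decomposition $\Z_p^\times=\F_p^\times\times\Xi_p$. Your explicit cross-term computation just spells out the verification the paper leaves implicit, and your closing inflation-restriction remark is accurate but supplementary rather than a different argument.
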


\begin{proof}
By \eqref{eq:ses virtual}, one has $G=G^\circ\rtimes_\beta\bar{\Sigma}$, where
$\bar{\Sigma}=\image(\hat{\theta})$, $\beta\colon\bar{\Sigma}\to\Aut(G^\circ)$ and $\beta(c)$ is left conjugation by $\sigma(c)$ for $c\in\bar{\Sigma}$.
Thus, by \eqref{eq:conc1}, the map $\theta^\vee\colon G\to\Xi_p$ given by
$\theta^\vee(g,c)=\theta^\circ(g)$ is a continuous homomorphism of groups, and 
$(\iota,\theta^\vee)$, where $\iota\colon\bar{\Sigma}\to\F_p^\times$ is the canonical inclusion,
defines a $p$-orientation of $G$.
\end{proof}

Let $(G,\theta)$ be an oriented virtual pro-$p$ group satisfying \eqref{eq:ses virtual}. 
As $\theta\colon G\to\Z_p^\times$ is a homomorphism onto an abelian group one has
\begin{equation}
\label{eq:conc2}
\theta(c\cdot g\cdot c^{-1})=\theta(g)
\end{equation}
for all $c\in C=\image(\sigma)$ and $g\in G$. Thus, if $i_c\in\Aut(G)$ denotes left conjugation by 
$c\in C$,
one has 
\begin{equation}
\label{eq:invcon}
\theta=\theta\circ i_c
\end{equation}
for all $c\in C$.

%%%%%%%%%%%%%%%%%%%%

\subsection{Oriented $\bar{\Sigma}$-virtual pro-$p$ groups}
\label{ss:Obsig}
From now on let $p$ be odd, and fix a subgroup $\bar\Sigma$ of $\F_p^\times$.
An  oriented virtual pro-$p$ group $(G,\theta)$ is said to be an
oriented $\bar{\Sigma}$-virtual pro-$p$ group, if $\image(\hat{\theta})=\bar{\Sigma}$.
Hence, by the previous subsection, for such a group one has a split short exact
sequence
\begin{equation}\label{eq:ses virtual2}
 \xymatrix{ \{1\}\ar[r] & G^\circ\ar[r] & G\ar[r]^{\hat{\theta}} & \bar\Sigma\ar[r]\ar@/_1pc/[l]_-{\sigma} & \{1\} }.
\end{equation}
By abuse of notation, we consider from now on $(G,\theta,\sigma)$ as
an oriented $\bar{\Sigma}$-virtual pro-$p$ group. As the following fact shows there is also
\emph{an alternative form} of a $\bSigma$-virtual pro-$p$ group.

\begin{fact}
\label{fact:virtpalt}
Let $\bSigma$ be a subgroup of $\F_p^\times$. Let $Q$ be a pro-$p$ group, let
$\theta^\circ\colon Q\to\Xi_p$ be a continuous homomorphism, and
let $\gamma_Q\colon\bSigma\to\Aut_c(Q)$ be a homomorphism of groups, where 
$\Aut_c(\argu)$ is the group of continuous automorphisms,
satisfying
\begin{equation}
\label{eq:pvirt1}
\begin{gathered}
\theta^\circ(\gamma_Q(c)(q))=\theta^\circ(q),\\
\end{gathered}
\end{equation}
for all $q\in Q$ and $c\in\bSigma$, then $(Q\rtimes_{\gamma_Q}\bSigma,\theta,\iota)$ is an
oriented $\bSigma$-virtual pro-$p$ group, where $\iota\colon\bSigma\to Q\rtimes_{\gamma_Q}\bSigma$ is the canonical map, and $\theta\colon Q\rtimes_{\gamma_Q}\bSigma\to\Z_p^\times$ is the homomorphism induced by 
$\theta^\circ$ (cf. Fact~\ref{fact:virtor}).
\end{fact}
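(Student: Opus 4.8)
The plan is to exhibit $(G,\theta,\iota)$, with $G=Q\rtimes_{\gamma_Q}\bSigma$, as a direct instance of the construction of Fact~\ref{fact:virtor}, so that the existence of the $p$-orientation is immediate and only the two defining properties of an oriented $\bSigma$-virtual pro-$p$ group remain to be checked.

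First I would define $\hat\theta\colon G\to\F_p^\times$ to be the projection of $G$ onto the complement $\bSigma$ followed by the canonical inclusion $\bSigma\hookrightarrow\F_p^\times$. Since $Q$ is precisely the kernel of this map, one has $G^\circ=\kernel(\hat\theta)=Q$, the canonical map $\iota\colon\bSigma\to G$ is a section of $\hat\theta$, and $\image(\hat\theta)=\bSigma$; thus the split short exact sequence \eqref{eq:ses virtual2} holds with $\sigma=\iota$. The remaining input needed to apply Fact~\ref{fact:virtor} is that the homomorphism $\theta^\circ\colon G^\circ=Q\to\Xi_p$ satisfies the invariance condition \eqref{eq:conc1}, namely $\theta^\circ(\iota(c)\cdot q\cdot\iota(c)^{-1})=\theta^\circ(q)$ for all $c\in\bSigma$ and $q\in Q$. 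In the semidirect product $Q\rtimes_{\gamma_Q}\bSigma$ conjugation by $\iota(c)$ acts on $Q$ exactly as $\gamma_Q(c)$, so this condition is literally the hypothesis \eqref{eq:pvirt1}. Hence Fact~\ref{fact:virtor} produces a $p$-orientation $\theta\colon G\to\Z_p^\times$, which is the map ``induced by $\theta^\circ$'' in the statement; explicitly $\theta=\hat\theta\cdot\theta^\vee$ with $\theta^\vee(q,c)=\theta^\circ(q)$.

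It then remains to verify the two properties defining an oriented $\bSigma$-virtual pro-$p$ group. The equality $\image(\hat\theta)=\bSigma$ holds by construction. For the pro-$p$ condition I would use the direct product decomposition $\Z_p^\times=\F_p^\times\times\Xi_p$: writing $\theta(q,c)=\tau(c)\cdot\theta^\circ(q)$ with $\tau(c)$ in the $\F_p^\times$-factor and $\theta^\circ(q)\in\Xi_p$, an element $(q,c)$ lies in $\kernel(\theta)$ if and only if $c=1$ and $q\in\kernel(\theta^\circ)$. Consequently $\kernel(\theta)\cong\kernel(\theta^\circ)$ is a closed subgroup of the pro-$p$ group $Q$, and is therefore itself pro-$p$. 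This shows that $(G,\theta)$ is an oriented virtual pro-$p$ group with $\image(\hat\theta)=\bSigma$, i.e. an oriented $\bSigma$-virtual pro-$p$ group.

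I do not expect a serious obstacle here: the statement is essentially a repackaging of Fact~\ref{fact:virtor} into semidirect-product language, and the only two points requiring care are the identification of conjugation by $\iota(c)$ with $\gamma_Q(c)$ (needed to match \eqref{eq:conc1} with \eqref{eq:pvirt1}) and the kernel computation, both of which reduce to the direct-product structure of $\Z_p^\times$.
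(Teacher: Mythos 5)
Your proof is correct and takes essentially the same route the paper intends: the paper states this fact without a separate proof, referring directly to Fact~\ref{fact:virtor}, and your argument --- identifying conjugation by $\iota(c)$ in $Q\rtimes_{\gamma_Q}\bSigma$ with $\gamma_Q(c)$ so that hypothesis \eqref{eq:pvirt1} becomes exactly condition \eqref{eq:conc1} --- is precisely that reduction. Your explicit verification that $\kernel(\theta)\cong\kernel(\theta^\circ)$ is a closed subgroup of $Q$, hence pro-$p$, is a correct check of the virtual pro-$p$ condition that the paper leaves implicit.
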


If $(G_1,\theta_1,\sigma_1)$ and  $(G_2,\theta_2,\sigma_2)$
are oriented $\bar{\Sigma}$-virtual pro-$p$ groups, a continuous group homomorphism
$\phi\colon G_1\to G_2$ is said to be a morphism of  $\bar\Sigma$-virtual pro-$p$ groups,
if $\sigma_2=\phi\circ\sigma_1$ and $\theta_1=\theta_2\circ\phi$.
Similarly, if $(Q,\theta_Q^\circ,\gamma_Q)$ and  $(R,\theta_R^\circ,\gamma_R)$
are $\bSigma$-virtual pro-$p$ groups in alternative form (cf. Fact~\ref{fact:virtpalt}), the continuous group homomorphism $\phi\colon Q\to R$ is a homomorphims of $\bSigma$-virtual pro-$p$ groups provided
$\theta_R\circ\phi=\theta_Q$ and if for all $c\in\bSigma$ and for all $q\in Q$ one has that
\begin{equation}
\label{eq:pvirt2}
\gamma_R(c)(\phi(q))=\phi(\gamma_Q(c)(q)).
\end{equation}

With this slightly more sophisticated set-up the category of $\bar\Sigma$-virtual pro-$p$ groups admits coproducts. In more detail, let
$(Q,\theta_Q^\circ,\gamma_Q)$ and  $(R,\theta_R^\circ,\gamma_R)$ be
 $\bar{\Sigma}$-virtual pro-$p$ groups in alternative form. Put $X=Q\amalg^{\mathrm{p}} R$. Then for every element $c\in\bar{\Sigma}$ there exists an element $\delta(c)\in\Aut(X)$ making the
diagram
\begin{equation}
\label{eq:diacop}
\xymatrix{Q\ar[r]^-{\iota_1}\ar[d]_{\gamma_Q(c)}& X\ar[d]^{\delta(c)}& 
R\ar[l]_-{\iota_2}\ar[d]^{\gamma_R(c)}\\
Q\ar[r]^-{\iota_1}& X& R\ar[l]_-{\iota_2}
}
\end{equation}
commute.  Since $\Xi_p$ is a pro-$p$ group, there exists a continuous
group homomorphism $\theta^\circ\colon X\to\Xi_p$ making the lower two rows of the
diagram 
\begin{equation}
\label{eq:unior}
\xymatrix{Q\ar[r]^-{j_Q}\ar[d]_{\gamma_Q(c)}&X\ar[d]^-{\delta(c)}&
R\ar[l]_-{j_R}\ar[d]^{\gamma_R(c)}\\
Q\ar[r]^-{j_Q}\ar[dr]_-{\theta_Q^\circ}&X\ar[d]^-{\theta^\circ}&
R\ar[l]_-{j_R}\ar[dl]^-{\theta_R^\circ}\\
&\Xi_p&}
\end{equation} 
commute. Since $\theta^\circ_{Q/R}=\theta^\circ_{Q/R}\circ \gamma_{Q/R}(c)$ for all $c\in\bar{\Sigma}$,
one has $\theta^\circ=\theta^\circ\circ\delta(c)$ for all $c\in\bar{\Sigma}$. The commutativity
of the diagram \eqref{eq:unior} yields that the group homomorphisms
$j_Q\colon (Q,\theta^\circ_Q,\gamma_Q)\to (X,\theta^\circ,\delta)$ and
$j_R\colon(R,\theta^\circ_R,\gamma_R)\to (X,\theta^\circ,\delta)$ are  homomorphisms
of oriented $\bSigma$-virtual pro-$p$ groups in alternative form. Moreover, one has the following.

\begin{prop}
\label{prop:virtcop}
The oriented $\bSigma$-virtual pro-$p$ group $(X,\theta^\circ,\delta)$ together with the homomorphisms $j_Q\colon Q\to X$, and $j_R\colon R\to X$ is a  coproduct in the category of oriented $\bSigma$-virtual pro-$p$ groups.
\end{prop}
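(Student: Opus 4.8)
The plan is to verify the universal property of the coproduct directly, reducing everything to the universal property of the underlying free pro-$p$ product $X=Q\ppp R$ together with its uniqueness clause. So let $(S,\theta_S^\circ,\gamma_S)$ be an arbitrary oriented $\bSigma$-virtual pro-$p$ group in alternative form, and let $f\colon(Q,\theta_Q^\circ,\gamma_Q)\to(S,\theta_S^\circ,\gamma_S)$ and $g\colon(R,\theta_R^\circ,\gamma_R)\to(S,\theta_S^\circ,\gamma_S)$ be morphisms. First I would forget the extra structure and apply the universal property of the free pro-$p$ product to the continuous homomorphisms $f\colon Q\to S$ and $g\colon R\to S$: this yields a unique continuous group homomorphism $h\colon X\to S$ with $h\circ j_Q=f$ and $h\circ j_R=g$. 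The only thing left to check is that $h$ is a morphism in the category of $\bSigma$-virtual pro-$p$ groups, i.e., that it is compatible with the orientations $\theta^\circ,\theta_S^\circ$ and with the $\bSigma$-actions $\delta,\gamma_S$.

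For the orientation, I would compare the two continuous homomorphisms $\theta_S^\circ\circ h$ and $\theta^\circ$ from $X$ to $\Xi_p$. Restricting along $j_Q$ gives $\theta_S^\circ\circ h\circ j_Q=\theta_S^\circ\circ f=\theta_Q^\circ$, since $f$ is a morphism, whereas $\theta^\circ\circ j_Q=\theta_Q^\circ$ by the construction of $\theta^\circ$ in \eqref{eq:unior}; the same holds after restricting along $j_R$. As $\Xi_p$ is a pro-$p$ group, the uniqueness clause of the universal property of $X$ forces $\theta_S^\circ\circ h=\theta^\circ$.

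For the $\bSigma$-equivariance, I would fix $c\in\bSigma$ and compare $\gamma_S(c)\circ h$ and $h\circ\delta(c)$ as continuous homomorphisms $X\to S$. Restricting along $j_Q$, and using the commutativity of \eqref{eq:diacop} (namely $\delta(c)\circ j_Q=j_Q\circ\gamma_Q(c)$) together with the fact that $f$ is a morphism (namely $\gamma_S(c)\circ f=f\circ\gamma_Q(c)$, which is \eqref{eq:pvirt2}), I would compute $\gamma_S(c)\circ h\circ j_Q=\gamma_S(c)\circ f=f\circ\gamma_Q(c)$ and $h\circ\delta(c)\circ j_Q=h\circ j_Q\circ\gamma_Q(c)=f\circ\gamma_Q(c)$, so the two homomorphisms agree on $j_Q(Q)$; symmetrically they agree on $j_R(R)$. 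By the uniqueness clause again, $\gamma_S(c)\circ h=h\circ\delta(c)$, which is exactly the required equivariance. Hence $h$ is a morphism of oriented $\bSigma$-virtual pro-$p$ groups satisfying $h\circ j_Q=f$ and $h\circ j_R=g$.

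Finally, uniqueness of such an $h$ is immediate: any morphism $h'\colon X\to S$ with $h'\circ j_Q=f$ and $h'\circ j_R=g$ is in particular a continuous homomorphism with these restrictions, so $h'=h$ by the uniqueness already furnished by the free pro-$p$ product. I do not expect a serious obstacle here: the genuine content of the statement is concentrated in the construction of $\delta$ and $\theta^\circ$ (their well-definedness, continuity and the homomorphism property of $\delta(c)$ and $\theta^\circ$), which the paragraphs preceding the proposition already establish. The one point requiring a little care within the proof itself is the systematic use of the uniqueness part of the universal property of $X$ to upgrade agreement on the images of $j_Q$ and $j_R$ to genuine equality of homomorphisms on all of $X$.
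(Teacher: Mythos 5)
Your proposal is correct and follows essentially the same route as the paper: both obtain $h$ from the universal property of the free pro-$p$ product $X=Q\ppp R$ and then use its uniqueness clause to upgrade agreement on $j_Q(Q)$ and $j_R(R)$ to the identities $\theta_S^\circ\circ h=\theta^\circ$ and $\gamma_S(c)\circ h=h\circ\delta(c)$. No gaps.
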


\begin{proof}
Let $(H,\theta_H,\gamma_H)$ be an oriented $\bSigma$-virtual pro-$p$ group
in alternative form,
and let $\phi_Q\colon Q\to H$ and $\phi_R\colon R\to H$ be homomorphisms of oriented
$\bSigma$-virtual pro-$p$ groups in alternative form. Then there exists a unique homomorphism of pro-$p$ groups $\phi\colon X\to H$ making the diagram
concentrated on the second and third row of
\begin{equation}
\label{eq:unior2}
\xymatrix{Q\ar[r]^-{j_Q}\ar[d]_{\gamma_Q(c)}&X\ar[d]^-{\delta(c)}&
R\ar[l]_-{j_R}\ar[d]^{\gamma_R(c)}\\
Q\ar@/_1pc/[2,1]_{\theta_Q^\circ}\ar[r]^-{j_Q}\ar[dr]_-{\phi_Q}&X\ar[d]^-{\phi}&
R\ar@/^1pc/[2,-1]^{\theta_R^\circ}\ar[l]_-{j_R}\ar[dl]^-{\phi_R}\\
&H\ar[d]^{\theta_H^\circ}&\\
&\Xi_p&}
\end{equation} 
commute.
Since $\phi_{Q/R}\circ\gamma_{Q/R}(c)=\gamma_H(c)\circ\phi_{Q/R}$ for all
$c\in\bSigma$, the uniqueness of $\phi$ implies that 
$\phi\circ\delta(c)=\gamma_H(c)\circ\phi$ for all $c\in\bSigma$. As $\phi_Q\colon Q\to H$
and $\phi_R\colon R\to H$ are homomorphisms of $\bSigma$-virtual pro-$p$ groups,
one has that $\theta_{Q/R}^\circ=\theta_H^\circ\circ\phi_{Q/R}$. This implies that
$(\theta_H^\circ\circ\phi)\circ j_{Q/R}=\theta_{Q/R}^\circ$, and from the construction of $\theta^\circ\colon X\to\Xi_p$ one concludes that $\theta^\circ=\theta_H^\circ\circ\phi$. This implies that $\phi$ is a homomorphism of oriented $\bSigma$-virtual pro-$p$ groups.
\end{proof}

\begin{ex}
For $p=3$ set $\bar\Sigma=\F_3^\times=\{1,s\}$.
Then the free product $(\Z_3^\times,\iid)\amalg^{\bar\Sigma}(\Z_3^\times,\iid)$ is isomorphic
to $F\rtimes\bar{\Sigma}$, where $F=\langle\,  x,y \,\rangle$ is a free pro-$3$ group of rank $2$
and the induced isomorphism $s\colon F\to F$ satisfies $s(x)=x^{-1}$, $s(y)=y^{-1}$.
\end{ex}

\begin{prop}
\label{prop:maxSig}
Let $(Q,\theta_Q,\gamma_Q)$ be an oriented $\bSigma$-virtual pro-$p$ group,
and let $\Zen$ be a normal $\bSigma$-invariant subgroup of $Q$
isomorphic to $\Z_p$, which is not contained in the Frattini subgroup
$\Phi(Q)=\cl([Q,Q]Q^p)$ of $Q$.
Then there exists a maximal closed subgroup $M$ of $Q$ which is
$\bSigma$-invariant, such that $M\cdot\Zen=Q$ and $M\cap\Zen=\Zen^p$.
\end{prop}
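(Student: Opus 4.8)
The plan is to pass to the Frattini quotient $V=Q/\Phi(Q)$, which is a profinite $\F_p$-vector space, and to produce $M$ as the preimage of a suitable hyperplane. Since $\Phi(Q)=\cl([Q,Q]Q^p)$ is characteristic, it is $\bSigma$-invariant, so the action $\gamma_Q$ descends to a continuous $\F_p$-linear action of $\bSigma$ on $V$, making $V$ a topological $\F_p[\bSigma]$-module. Under the correspondence between maximal (equivalently, index-$p$) closed subgroups of the pro-$p$ group $Q$ and closed hyperplanes of $V$, the $\bSigma$-invariant maximal closed subgroups of $Q$ correspond exactly to the $\bSigma$-invariant closed hyperplanes of $V$. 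Thus it suffices to find one such hyperplane missing the image of $\Zen$.

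First I would record the position of $\Zen$. As $\Zen\cong\Z_p$ is topologically cyclic and $\Zen\not\subseteq\Phi(Q)$ by hypothesis, its image $\bar{\Zen}$ in $V$ is a $1$-dimensional $\bSigma$-submodule, spanned by the image of a topological generator. The first isomorphism theorem then gives $\Zen/(\Zen\cap\Phi(Q))\cong\bar{\Zen}\cong\F_p$, so that $\Zen\cap\Phi(Q)$ is the unique closed subgroup of index $p$ in $\Zen$, namely $\Zen^p$.

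The heart of the argument is to split $\bar{\Zen}$ off $\bSigma$-equivariantly, and here the hypotheses that $p$ is odd and $\bSigma\subseteq\F_p^\times$ are decisive: $|\bSigma|$ divides $p-1$ and is therefore prime to $p$, so a Maschke-type averaging applies. Since $\bar{\Zen}$ is finite-dimensional and closed, there is a continuous $\F_p$-linear projection $\pi_0\colon V\to\bar{\Zen}$; averaging it over $\bSigma$,
\[
\pi(v)=\frac{1}{|\bSigma|}\sum_{c\in\bSigma} c\cdot\pi_0(c^{-1}\cdot v),
\]
yields a continuous $\bSigma$-equivariant projection onto $\bar{\Zen}$, using that $\bSigma$ acts on the line $\bar{\Zen}$ through a character which cancels in the averaging. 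I then set $W=\kernel(\pi)$, a closed $\bSigma$-invariant hyperplane with $V=\bar{\Zen}\oplus W$, and let $M$ be its preimage in $Q$; this $M$ is a $\bSigma$-invariant maximal closed subgroup of $Q$.

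Finally I would verify the two identities. As $M$ is maximal it contains $\Phi(Q)$, and $\bar{\Zen}\not\subseteq W$ forces $\Zen\not\subseteq M$, so $M\cdot\Zen$ is a closed subgroup (a product of compact subgroups with $\Zen$ normal) strictly containing $M$; by maximality $M\cdot\Zen=Q$. For the intersection, $\Phi(Q)\subseteq M$ gives $\Zen^p=\Zen\cap\Phi(Q)\subseteq\Zen\cap M$, while $\bar{\Zen}\cap W=0$ shows that $\Zen\cap M$ maps to $0$ in $V$, i.e. $\Zen\cap M\subseteq\Phi(Q)$, whence $\Zen\cap M\subseteq\Zen\cap\Phi(Q)=\Zen^p$; thus $\Zen\cap M=\Zen^p$. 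The only delicate point — the main obstacle — is the equivariant splitting in the profinite setting, where one must guarantee that the averaged projection is continuous; this is ensured precisely because $\bar{\Zen}$ is finite-dimensional and closed in $V$, which reduces the existence of $\pi_0$ to choosing a continuous functional nonzero on the image of a generator of $\Zen$.
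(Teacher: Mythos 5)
Your proof is correct and follows essentially the same route as the paper: both pass to the Frattini quotient $\baQ=Q/\Phi(Q)$, viewed as a profinite $\F_p[\bSigma]$-module, and exploit that $|\bSigma|$ divides $p-1$ (hence is prime to $p$) to produce a $\bSigma$-invariant closed hyperplane missing the image of $\Zen$, whose preimage is the desired $M$. The paper phrases this via Pontryagin duality, choosing an equivariant functional $\psi\in\Hom^c_{\bSigma}(\baQ,\F_p)$ with $\psi\vert_{\Zen}\neq 0$ and setting $M=\kernel(\psi)$; your explicit Maschke averaging of a continuous projection onto $\bar{\Zen}$ is precisely the mechanism underlying that duality argument.
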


\begin{proof}
Let $\baQ=Q/\Phi(Q)$. Then $\gamma_Q$ induces a homomorphism
$\bgamma_{\baQ}\colon\bSigma\to\Aut_c(\baQ)$ making $\baQ$ a compact
$\F_p[\bSigma]$-module. Let $\Omega=\Hom_{\bSigma}^c(\baQ,\F_p)$, where $\F_p$ denotes the
finite field $\F_p$ with canonical left $\bSigma$-action.
By Pontryagin duality, one has $\bigcap_{\omega\in\Omega}\kernel(\omega)=\{0\}$. Thus, by hypothesis, there exists
$\psi\in\Omega$ such that $\psi\vert_\Zen\not=0$.
Hence $M=\kernel(\psi)$ has the desired properties.
\end{proof}

%%%%%%%%%%%%%%%%%%%%%%%%%%%%%%%%%%%%%

\subsection{The maximal oriented virtual pro-$p$ quotient}

For a prime $p$ and a profinite group $G$ we denote by $O^p(G)$  the closed subgroup of $G$
generated by all Sylow pro-$\ell$ subgroups of $G$, $\ell\not=p$.
In particular, $O^p(G)$ is {\sl $p$-perfect}, i.e., $H^1(O^p(G),\F_p)=0$,
and one has the short exact sequence 
\[ \xymatrix{\{1\}\ar[r] & O^p(G)\ar[r] & G\ar[r] & G(p) \ar[r] & \{1\}},\]
where $G(p)$ denotes the {\sl maximal pro-$p$ quotient} of $G$.

For a $p$-oriented profinite group $(G,\theta)$, we denote by
 \[G(\theta)=G/O^p(G^\circ)\]
 the {\sl maximal $p$-oriented virtual
pro-$p$ quotient} of $G$ (for the definition of $G^\circ$ see the beginning of \S~\ref{s:virtual}).
By construction, it carries naturally a $p$-orientation $\theta\colon G(\theta)\to\Z_p^\times$ inherited by $G$.

Note that if $\image(\theta)$ is a pro-$p$ group, then $G^\circ=G$, and $G(\theta)=G(p)$.

\begin{prop}
\label{propB}
 Let $(G,\theta)$ be a $p$-oriented Bloch-Kato profinite group,
 and let $O\subseteq G$ be a $p$-perfect subgroup such that $O\subseteq\kernel(\theta)$.
Then the inflation map
\begin{equation}
\label{eq:infGp}
\ifl^k(M)\colon H^k_{\cts}(G/O,M)\longrightarrow H^k_{\cts}(G,M),
\end{equation}
is an isomorphism for all $k\geq 0$ and all $M\in\ob({}_{\Z_p\dbl G/O\dbr}\prf)$, where
${}_{\Z_p\dbl G/O\dbr}\prf$ denotes the abelian category of profinite left 
$\Z_p\dbl G/O\dbr$-modules.
\end{prop}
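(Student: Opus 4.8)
The plan is to compute $H^\bullet_{\cts}(G,M)$ via the Hochschild-Serre spectral sequence of the extension $\triv\to O\to G\to G/O\to\triv$ (here $O$ is understood to be closed and normal, so that $G/O$ is a profinite group), and to show that it collapses onto the bottom row. Since $M$ is inflated from $G/O$, the subgroup $O$ acts trivially on $M$, so that $H^0_{\cts}(O,M)=M$, and the only obstruction to the collapse is the vanishing of $H^t_{\cts}(O,M)$ for $t\geq1$.

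The heart of the argument --- and the step where the Bloch-Kato hypothesis enters essentially --- is the computation of $H^\bullet(O,\F_p)$. As $O$ is closed in $G$ and $O\subseteq\kernel(\theta)\subseteq\kernel(\htheta)$, the restriction $\htheta\vert_O$ is trivial, so by Corollary~\ref{cor:Bkop} the $\F_p$-algebra $H^\bullet(O,\htheta\vert_O)=H^\bullet(O,\F_p)$ is quadratic. A quadratic algebra is generated by its degree-one component; but $O$ is $p$-perfect, i.e. $H^1(O,\F_p)=0$. Hence the canonical map $\boT(H^1(O,\F_p))\to H^\bullet(O,\F_p)$ is surjective with source concentrated in degree $0$, which forces $H^k(O,\F_p)=0$ for all $k\geq1$; equivalently $\ccd_p(O)=0$.

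I would then propagate this vanishing to arbitrary coefficients in two steps. First, for a finite module $M$ (necessarily a finite abelian $p$-group on which $O$ acts trivially), a d\'evissage along a filtration with successive quotients isomorphic to $\F_p$, together with the long exact cohomology sequences, gives $H^t_{\cts}(O,M)=0$ for $t\geq1$. Second, writing a general profinite $M$ as an inverse limit $M=\varprojlim_i M_i$ of finite $\Z_p\dbl G/O\dbr$-modules and using that continuous cochain cohomology commutes with such inverse limits (cf. \cite[Thm.~2.7.5]{nsw:cohn}) yields $H^t_{\cts}(O,M)=0$ for $t\geq1$, while $H^0_{\cts}(O,M)=\varprojlim_i M_i=M$.

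With $E_2^{s,t}=H^s(G/O,H^t_{\cts}(O,M))$ vanishing for $t\geq1$ and equal to $H^s(G/O,M)$ for $t=0$, the spectral sequence degenerates and the edge homomorphism --- which is exactly $\ifl^s(M)$ --- is an isomorphism for every $s\geq0$. The main obstacle is the $\F_p$-computation; once quadraticity of $H^\bullet(O,\F_p)$ is in hand, the vanishing in positive degrees is immediate, and the remaining reductions (d\'evissage and the passage to the limit) are routine. A minor point worth recording is that, since inflation commutes with inverse limits on both sides, one may reduce the whole statement to finite $M$ from the outset, should one prefer to avoid invoking the spectral sequence with profinite coefficients directly.
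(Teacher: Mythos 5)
Your proposal is correct, and it shares with the paper only the opening move: exactly as in the paper's proof, you use Corollary~\ref{cor:Bkop} together with $p$-perfectness to conclude that $H^\bullet(O,\F_p)$ is a quadratic algebra with trivial degree-one part, hence concentrated in degree $0$. After that the two arguments genuinely diverge. The paper dualizes and stays in homology: by Pontryagin duality the vanishing becomes $H_k(O,\F_p)=0$ for $k>0$ in Brumer's sense, the long exact sequence gives $H_k(O,\Z_p)=0$, and then a projective resolution $P_\bullet$ of $\Z_p$ in $\ZpGprf$ is deflated via $\dfl(P)=\Z_p\dbl G/O\dbr\,\hotimes_G\,P$; the Eckmann--Shapiro lemma shows $\dfl(P_\bullet)$ is again a projective resolution, and the adjunction $\Hom_{G/O}(\dfl(P),M)\simeq\Hom_G(P,M)$ identifies the two cochain complexes outright --- so all profinite $M$ are handled in one stroke, with no spectral sequence and no limit argument. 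Your route (d\'evissage to $\F_p$, Hochschild--Serre for finite coefficients, passage to the limit) is more elementary and avoids completed tensor products and deflated resolutions, but the profinite-coefficient step needs the care you yourself half-acknowledge: the Hochschild--Serre spectral sequence for continuous cochain cohomology with general profinite coefficients is not standard, so your closing remark --- reduce to finite $M$ from the outset --- should be promoted to the actual structure of the proof. Note also that every profinite $\Z_p\dbl G/O\dbr$-module is an inverse limit of finite quotient modules with surjective transition maps (and these quotients are finite $p$-groups, so your $\F_p$-d\'evissage applies); the appeal to \cite[Thm.~2.7.5]{nsw:cohn} is then cleanest via the Milnor $\varprojlim^1$-sequence: the cochain systems are degreewise surjective, the $\varprojlim^1$ terms for $O$ vanish because the relevant groups are finite or zero, and since inflation is an isomorphism on $H^j(\argu,M_i)$ for every $j$ and $i$, both the $\varprojlim$ and $\varprojlim^1$ terms for $G$ and $G/O$ match, giving the isomorphism for $M$ by the five lemma. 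In exchange for this extra bookkeeping, your argument uses nothing beyond standard discrete-coefficient cohomology of profinite groups, whereas the paper's deflation argument is shorter, uniform in $M$, and produces the isomorphism at the level of cochain complexes.
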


\begin{proof}
As $O\subseteq\kernel(\theta)$, 
$\Z_p(k)$ is a trivial $\Z_p\dbl O\dbr$-module for every $k\in\Z$.
Since $O$ is $p$-perfect, and as the $\F_p$-algebra $H^\bullet(O,\F_p)$ is quadratic, 
$H^\bullet(O,\F_p)$ is 1-dimensional concentrated in degree 0.
By Pontryagin duality, this is equivalent to $H_k(O,\F_p)=0$ for all $k>0$,
where $H_k(O,\argu)$ denotes Galois homology as defined by A.~Brumer in \cite{brum:pseudo}.
Thus, the long exact sequence in Galois homology implies that
$H_k(O,\Z_p)=0$ for all $k>0$. 

Let $(P_\bullet,\der_\bullet,\eps)$ be a projective resolution of the trivial left
$\ZpG$-module in the category $\ZpGprf$. For a projective left $\ZpG$-module $P\in\ob(\ZpGprf)$ define
\begin{equation}
\label{eq:defdef}
\dfl(P)=\dfl^G_{G/O}(P)=\Z_p\dbl G/O\dbr\,\hotimes_G\, P,
\end{equation}
where $\hotimes$ denotes the completed tensor product as defined in \cite{brum:pseudo}.
Then, by the Eckmann-Shapiro lemma in homology, one has that
\begin{equation}
\label{eq:def1}
H_k(\dfl(P_\bullet),\dfl(\der_\bullet))\simeq H_k(O,\Z_p).
\end{equation}
Hence, by the previously mentioned remark, $(\dfl(P_\bullet),\dfl(\der_\bullet))$ is a 
projective resolution of $\Z_p$ in the category ${}_{\Z_p\dbl G/O\dbr}\mathbf{prf}$.

Let $M\in\ob({}_{\Z_p\dbl G/O\dbr}\mathbf{prf})$.
Then for every projective profinite left $\ZpG$-module $P$, one has a natural isomorphism
\begin{equation}
\label{eq:def2}
\Hom_{G/O}(\dfl(P),M)\simeq\Hom_G(P,M).
\end{equation}
Hence $\Hom_{G/O}(\dfl(P_\bullet),M)$ and $\Hom_G(P_\bullet,M)$ are 
isomorphic co-chain complexes, and the
induced maps in cohomology --- which coincide with $\ifl^\bullet(M)$ --- are isomorphisms.
\end{proof}

\begin{cor}\label{cor:hered_cyc}
  Let $(G,\theta)$ be a $p$-oriented profinite group which is Bloch-Kato, respectively cyclotomically oriented.
Then the maximal oriented virtual pro-$p$ quotient $(G(\theta),\theta)$ is Bloch-Kato,
 respectively cyclotomically oriented.
\end{cor}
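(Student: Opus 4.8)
The plan is to exhibit $(G(\theta),\theta)$ as the quotient of $(G,\theta)$ by the closed subgroup $O=O^p(G^\circ)$ and then to feed every open subgroup into Proposition~\ref{propB}. First I would verify that $O$ meets the hypotheses of that proposition. It is $p$-perfect by construction; it is characteristic in $G^\circ=\kernel(\htheta)$ (automorphisms permute the Sylow pro-$\ell$ subgroups generating it) and hence normal in $G$, since $G^\circ$ is normal in $G$; and it is contained in $\kernel(\theta)$. Only the last point needs an argument: as $\htheta$ is trivial on $G^\circ$, the orientation $\theta$ maps $G^\circ$ into the pro-$p$ Sylow subgroup $\Xi_p$ of $\Z_p^\times$, and a $p$-perfect profinite group admits no non-trivial continuous homomorphism to an abelian pro-$p$ group (such a map would factor through its trivial maximal abelian pro-$p$ quotient), so $\theta\vert_O=\eqo_O$.

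Next I would reduce both assertions to a statement about open subgroups. The open subgroups of $G(\theta)=G/O$ are precisely the groups $V=U/O$ with $U\subseteq G$ open and $O\subseteq U$. For each such $U$, the pair $(U,\theta\vert_U)$ is again Bloch-Kato, $O$ is a $p$-perfect closed normal subgroup of $U$ with $O\subseteq\kernel(\theta\vert_U)$, and Proposition~\ref{propB} applied to $(U,\theta\vert_U)$ and $O$ produces, for every $M\in\ob({}_{\Z_p\dbl U/O\dbr}\prf)$ and every $k\geq0$, an inflation isomorphism
\[
\ifl^k(M)\colon H^k_{\cts}(U/O,M)\longrightarrow H^k_{\cts}(U,M).
\]

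For the Bloch-Kato conclusion I would take $M=\F_p(k)$ (which lies in ${}_{\Z_p\dbl U/O\dbr}\prf$ because $O$ acts trivially). Since inflation respects cup products and the pairings $\F_p(k)\otimes\F_p(l)\to\F_p(k+l)$, these isomorphisms assemble into an isomorphism of graded $\F_p$-algebras $H^\bullet(U/O,\htheta\vert_{U/O})\simeq H^\bullet(U,\htheta\vert_U)$; the right-hand side is quadratic, hence so is the left-hand side, and letting $V=U/O$ range over all open subgroups shows that $(G(\theta),\theta)$ is Bloch-Kato. For the cyclotomic conclusion I would instead take $M=\Z_p(k)$ and read the isomorphism in degree $k+1$: $H^{k+1}_{\cts}(U/O,\Z_p(k))\simeq H^{k+1}_{\cts}(U,\Z_p(k))$. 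By cyclotomicity of $(G,\theta)$ the right-hand side is torsion free, hence so is the left-hand side, and Proposition~\ref{fact:p torsion} then gives that $\theta\vert_{U/O}$ is $k$-cyclotomic; letting $U$ and $k$ vary yields that $(G(\theta),\theta)$ is cyclotomically oriented.

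The only genuine obstacle is that invoking Proposition~\ref{propB} requires $(U,\theta\vert_U)$ to be Bloch-Kato: this is exactly the hypothesis forcing $H^\bullet(O,\F_p)$ to be quadratic (via Corollary~\ref{cor:Bkop}) and hence, together with $p$-perfectness, concentrated in degree $0$, which is what makes $\dfl^G_{G/O}$ carry a projective resolution to a projective resolution in the proof of Proposition~\ref{propB}. Consequently the cyclotomic half of the statement is to be read in the presence of the Bloch-Kato hypothesis (the standing assumption in this part of the paper); $p$-perfectness of $O$ and cyclotomicity of $\theta$ alone do not control the higher homology groups $H_k(O,\Z_p)$.
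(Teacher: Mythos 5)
Your proof is correct and is essentially the paper's own (implicit) argument: the corollary follows by applying Proposition~\ref{propB} with $O=O^p(G^\circ)$ to every open subgroup $U\supseteq O$ of $G$, taking $M=\F_p(k)$ to transfer quadraticity and $M=\Z_p(k)$ to transfer torsion-freeness via Proposition~\ref{fact:p torsion}. You have merely made explicit the details the paper leaves tacit --- that $\theta\vert_O=\eqo_O$, that open subgroups of $G(\theta)$ are exactly the $U/O$, and that the Bloch-Kato hypothesis is what powers Proposition~\ref{propB} even for the cyclotomic half --- all of which are accurate readings of the paper's intent.
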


%%%%%%%%%%%%%%%%%%%%%%%%%%%%%%%%%
%%%% Poincare duality, Thomas, 15/3/2016 %%%%%%%%%%%
%%%%%%%%%%%%%%%%%%%%%%%%%%%%%%%%%

%%%%%%%%%%%%%%%%%%%%%%%%%%%%%%%%%%%%%%%%%%%%%%%%%%NEW
%%%%%  %%%%%%%%%%%%%%%
%%%%%%%%%%%%%%%%%%%%%%%%%%%%%%%%%%%%%

%%%%%%%%%%%%%%%%%%%%%%%%%%%%%%%%%%5%%%%%%%%%%%%%%%%%%%%%%%%%
%%%%%%%5
%%%%%%%%%%%%%%%%%%%%%%%%%%%%%%%%%

%%%%%%%%%%%%%%%%%%%%%%%%%%%%%%%%%
%%%% Poincare duality, Thomas, 15/3/2016 %%%%%%%%%%%
%%%%%%%%%%%%%%%%%%%%%%%%%%%%%%%%%

\section{Profinite Poincar\'e duality groups and $p$-orientations}
\label{s:PDgps}
%%%%%%

\subsection{Profinite Poincar\'e duality groups}
\label{ss:PDpor}
Let $G$ be a profinite group, and let $p$ be a prime number.
Then $G$ is called a {\sl $p$-Poincar\'e duality group} of
dimension $d$, if
\begin{itemize}
\item[(PD$_1$)] $\ccd_p(G)=d$;
\item[(PD$_2$)] $|H^k_{\cts}(G,A)|<\infty$ for every finite discrete left $G$-module $A$ of $p$-power order;
\item[(PD$_3$)] $H^k_{\cts}(G,\ZpG)=0$ for $k\not= d$, and $H^d_{\cts}(G,\ZpG)\simeq\Z_p$.
\end{itemize}
Although quite different at first glance, for a pro-$p$ group our definition of $p$-Poincar\'e duality
coincides with the definition given by J-P.~Serre in \cite[\S I.4.5]{ser:gal}.
However, some authors prefer to omit the condition (PD$_2$) in the definition of a
$p$-Poincar\'e duality group (cf. \cite[Chap.~III, \S7, Definition~3.7.1]{nsw:cohn}).

For a profinite $p$-Poincar\'e duality group $G$ of dimension $d$
the profinite right $\ZpG$-module $D_G=H^d_{\cts}(G,\ZpG)$ is called the {\sl dualizing module}. 
Since $D_G$ is isomorphic to $\Z_p$ as a pro-$p$ group, there exists a unique $p$-orientation $\eth_G\colon G\to\Z_p^\times$
such that for $g\in G$ and $z\in D_G$ one has $$z\cdot g=z\cdot\eth_G(g)=\eth_G(g)\cdot z.$$
We call $\eth_G$ the {\sl dualizing $p$-orientation}.

Let ${}^\times D_G$ denote the associated 
profinite left $\ZpG$-module, i.e., setwise ${}^\times D_G$ coincides with $D_G$ and
for $g\in G$ and $z\in {}^\times D_G$ one has \[g\cdot z=z\cdot g^{-1}=\eth_G(g^{-1})\cdot z.\]
For a profinite $p$-Poincar\'e duality group of dimension $d$ the usual standard arguments 
(cf. \cite[\S VIII.10]{coh:brown} for the discrete case) provide
natural isomorphisms
\begin{equation}
\label{eq:natPD}
\begin{aligned}
\Tor^G_k(D_G,\argu)&\simeq H^{d-k}_{\cts}(G,\argu),\\
\Ext^k_G({}^\times D_G,\argu)&\simeq H_{d-k}(G,\argu),
\end{aligned}
\end{equation}
where $\Tor^G_\bullet(\argu,\argu)$ denotes the left derived functor of
$\argu\hotimes_G\argu$, and $\Ext^\bullet_G(\argu,\argu)$
denotes the right derived functors of $\Hom_G(\argu,\argu)$
in the category $\ZpGprf$ (cf. \cite{brum:pseudo}).

If $A$ is a discrete left $G$-module which is also a $p$-torsion module, then $A^\ast$
carries naturally the structure of a left (profinite) $\ZpG$-module (cf. \cite[p.~171]{ribzal:book}).
Then, by \cite[\S~I.3.5, Proposition~17]{ser:gal}, Pontryagin duality and \cite[(3.4.5)]{sw:cpg}, one obtains
for every finite discrete left $\ZpG$-module $A$ of $p$-power order that
\begin{equation}
\label{eq:natPD2}
H^d_{\cts}(G,A)\simeq \Hom_G(A,I_G)^\ast\simeq \Hom_G(I_G^\ast,A^\ast)^\ast
\simeq(I_G^\ast)^\times\,\hotimes_G\, A,
\end{equation}
where $I_G$ denotes the discrete left dualizing module of $G$ (cf. \cite[\S I.3.5]{ser:gal}).
In particular, by \eqref{eq:natPD}, $D_G\simeq (I^\ast_G)^\times$.

\begin{ex}
\label{ex:ladic}
Let $G_\K$ be the absolute Galois group of an $\ell$-adic field $\K$. Then 
$G_\K$ satisfies $p$-Poincar\'e duality of dimension $2$ for all prime numbers $p$.
One has $I_G\simeq\mu_{p^\infty}(\bar\K)$ (cf. \cite[\S II.5.2, Theorem~1]{ser:gal}). Hence 
${}^\times D_{G_\K}\simeq \Z_p(-1)$
with respect to the cyclotomic $p$-orientation $\theta_{\K,p}\colon G_{\K}\to\Z_p^\times$, i.e.,
 $\eth_{G_\K}=\theta_{\K,p}$.
\end{ex}

As we will see in the next proposition, the final conclusion in Example~\ref{ex:ladic} 
is a consequence of a general property of Poincar\'e duality groups.

\begin{prop}
\label{prop:woPD}
Let $G$ be a $p$-Poincar\'e duality group of dimension $d$, and let 
$\theta\colon G\to\Z_p^\times$ be a cyclotomic $p$-orientation of $G$. 
Then $\theta^{d-1}=\eth_G$ and  ${}^\times D_G\simeq \Z_p(1-d)$.
\end{prop}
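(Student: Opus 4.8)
The two conclusions are equivalent, so the plan is to establish the single identity $\eth_G=\theta^{d-1}$. Indeed, ${}^\times D_G$ is free of rank $1$ over $\Z_p$ with $G$ acting through the character $\eth_G^{-1}$ (recall $g\cdot z=\eth_G(g^{-1})z$), whereas $\Z_p(1-d)$ is $\Z_p$ with $G$ acting through $\theta^{1-d}$; hence ${}^\times D_G\simeq\Z_p(1-d)$ as $\ZpG$-modules exactly when $\eth_G=\theta^{d-1}$. I would derive this identity from the interplay between Poincar\'e duality in top degree and the torsion-freeness furnished by cyclotomicity, exactly as in the toy computation behind Example~\ref{ex:ladic}.

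First I would feed the twists $\Z_p(k)$ into the duality isomorphism \eqref{eq:natPD}: taking $\Tor^G_0$ gives a natural identification $H^d_{\cts}(G,\Z_p(k))\simeq D_G\hotimes_G\Z_p(k)$. Since $D_G$ (a right module, $z\cdot g=\eth_G(g)z$) and $\Z_p(k)$ (a left module, $g\cdot z=\theta(g)^k z$) are both free of rank $1$ over $\Z_p$, the completed tensor product is the cyclic module $\Z_p/I_k$, where $I_k$ is the closed ideal generated by the $\eth_G(g)-\theta(g)^k=\theta(g)^k\bigl(\eth_G(g)\theta(g)^{-k}-1\bigr)$. Setting $\psi_k=\eth_G\cdot\theta^{-k}\colon G\to\Z_p^\times$, the ideal is generated by $\{\psi_k(g)-1\}$, so that $\Z_p/I_k\simeq\Z_p$ if $\psi_k\equiv\mathbf{1}$, is finite and nonzero if $\image(\psi_k)\subseteq 1+p\Z_p$ is nontrivial, and vanishes if $\image(\psi_k)\not\subseteq 1+p\Z_p$.

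I would then apply cyclotomicity at the single level $k=d-1$: by Proposition~\ref{fact:p torsion}(iv) the group $H^d_{\cts}(G,\Z_p(d-1))$ is torsion free, so $\Z_p/I_{d-1}$ is torsion free. This rules out the finite nonzero case and leaves only $I_{d-1}=0$, i.e.\ $\eth_G=\theta^{d-1}$ (which is what we want), or $I_{d-1}=\Z_p$, i.e.\ $H^d_{\cts}(G,\Z_p(d-1))=0$. It remains to exclude this vanishing, equivalently to prove the congruence $\htheta^{d-1}=\widehat{\eth}_G$ of the mod-$p$ reductions. Here I would bring in the fundamental class: by (PD$_3$) and the identification of $H^d_{\cts}(G,-)$ with coinvariants against the dualizing module one has $H^d(G,\F_p(\widehat{\eth}_G))\simeq\F_p\neq 0$, while \eqref{eq:lesZpZpFp} together with the vanishing of $\delta^{d-1}$ (cyclotomicity) gives $H^d(G,\F_p(d-1))\simeq H^d_{\cts}(G,\Z_p(d-1))/p$; matching the unique twist that carries the fundamental class against the twist $d-1$ then forces $\widehat{\eth}_G=\htheta^{d-1}$, whence $I_{d-1}=0$.

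The hard part is precisely this last, mod-$p$, step. The torsion-free condition only controls the \emph{wild} (pro-$p$) part of the discrepancy character $\psi_{d-1}$, while remaining insensitive to its tame part: if $\image(\psi_{d-1})\not\subseteq 1+p\Z_p$ then $\Z_p/I_{d-1}$ already vanishes and cyclotomicity sees nothing. Thus determining $\widehat{\eth}_G$ genuinely requires top-degree Poincar\'e duality (existence and uniqueness of the fundamental class) rather than cyclotomicity alone. I expect to handle the wild part by restricting to a pro-$p$ Sylow subgroup $P\subseteq G$, where $\eth_P=\eth_G|_P$ and $\theta|_P$ both take values in $1+p\Z_p$, so that the torsion-free argument applied to $P$ pins down the integral statement on $P$; the delicate point that I would need to argue carefully is the comparison of the tame parts of $\eth_G$ and $\theta^{d-1}$, controlled through the prime-to-$p$ quotient $G/\kernel(\htheta)$ and the nonvanishing of $H^d(G,\F_p(\widehat{\eth}_G))$.
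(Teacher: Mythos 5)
Your first two paragraphs are, in expanded form, exactly the paper's proof: the paper invokes \eqref{eq:natPD} to identify $H^d_{\cts}(G,\Z_p(d-1))$ with $D_G\hotimes_G\Z_p(d-1)=\Z_p/I_{d-1}$, notes this is torsion free by cyclotomicity, and then simply writes ``torsion free, and hence isomorphic to $\Z_p$''. The loophole you isolate --- $I_{d-1}=\Z_p$, i.e.\ vanishing of the top cohomology when the discrepancy character $\psi_{d-1}=\eth_G\theta^{1-d}$ has a nontrivial tame part --- is passed over silently in the paper. Your treatment of the wild part is correct and closes the gap whenever $\psi_{d-1}$ takes values in $1+p\Z_p$: this is automatic for $p=2$ (every element of $\Z_2^\times$ is $\equiv 1 \bmod 2$, so $I_{d-1}\subseteq 2\Z_2$ is never the unit ideal and torsion-freeness forces $I_{d-1}=0$), and it holds for $p$ odd whenever $G$ is pro-$p$ (a continuous character of a pro-$p$ group lands in the pro-$p$ Sylow subgroup $1+p\Z_p$). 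These cases cover every use the paper makes of the proposition (Demu\v skin groups, $\theta$-abelian pro-$p$ groups in Corollary~\ref{cor:tab}, and $\ell$-adic fields, where $H^2_{\cts}\neq 0$ is known independently).

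Your final step, however --- excluding a tame discrepancy via the fundamental class --- fails, and no argument can succeed there, because the statement is false in that generality. The nonvanishing $H^d(G,\F_p(d-1))\neq 0$ you want is, by the same duality taken mod $p$, \emph{equivalent} to $\widehat{\eth}_G=\htheta^{d-1}$, so ``matching the twist that carries the fundamental class'' presupposes the conclusion; and cyclotomicity is vacuous against a tame discrepancy, since then $\Z_p/I_{d-1}=0$, which is torsion free. Concretely, let $p$ be odd, $1<q\mid p-1$, and $G=\Z_p\rtimes_\chi C_q$ with $\chi$ a faithful tame character. One checks $H^0(G,\ZpG)=0$ and $H^1(G,\ZpG)\simeq\Z_p$ (take $C_q$-invariants of $H^1(\Z_p,\ZpG)\simeq\Z_p^{\,q}$), so $G$ is a profinite $p$-Poincar\'e duality group of dimension $d=1$; since $(\chi(\gamma)-1)$ is a unit, $G^{\mathrm{ab}}\simeq C_q$, all characters of $G$ are tame, and a direct Hochschild--Serre computation shows $H^1(G,\Z_p(\psi))\neq 0$ only for $\psi=\chi$, which compared with \eqref{eq:natPD} forces $\eth_G=\chi\neq\bone$. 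Yet every $p$-orientation $\theta$ of $G$ is cyclotomic by Fact~\ref{fact:ccd1}, while $\theta^{d-1}=\theta^{0}=\bone$. Crossing with $\Z_p^{d-1}$ and taking $\theta=\bone$ (still cyclotomic: for every open subgroup the relevant groups $H^{k+1}_{\cts}(U,\Z_p)$ are invariants of exterior powers, hence $0$ or free) produces counterexamples in every dimension $d$. So the proposition actually requires an extra hypothesis --- $G$ pro-$p$, or $p=2$, or $H^d_{\cts}(G,\Z_p(d-1))\neq 0$ --- and the merit of your proposal is that it makes this visible; but the ``delicate point'' you flag is a genuine obstruction, not a missing lemma.
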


\begin{proof}
By \eqref{eq:natPD} and the hypothesis, 
$H^d_{\cts}(G,\Z_p(d-1))\simeq D_G\, \hotimes\,\Z_p(d-1)$
is torsion free, and hence isomorphic to $\Z_p$.
This implies $\eth_G=\theta^{d-1}$.
\end{proof}

%%%%%%%%%%%%%%%%%%%%%%%%%%%%%%%%%%%

\subsection{Finitely generated $\theta$-abelian pro-$p$ groups}
\label{ss:thetaab}
Recall that $(G,\theta)$ is said to be $\theta$-abelian if $\kernel(\theta)=\Zen_\theta(G)$ and 
$\Zen_\theta(G)$ is $p$-torsion free --- in particular $\kernel(\theta)$ is an abelian pro-$p$ group.
If $G$ is finitely generated then one has an isomorphism of left $\ZpG$-modules $N\simeq \Z_p(1)^r$
for some non-negative integer $r$, and either $\Gamma=\image(\theta)$ is a finite group of order coprime to $p$, or
$\Gamma$ is a $p$-Poincar\'e duality group of dimension $1$ satisfying $\eth_\Gamma=\eqo_\Gamma$
(cf. \cite[Prop.~3.7.6]{nsw:cohn}).
Moreover, one has isomorphisms of left $\ZpG$-modules
\begin{equation}
\label{eq:tab1}
H_k(N,\Z_p)\simeq\Lambda_k(N)\simeq \Z_p(k)^{\binom{r}{k}},
\end{equation}
where $\Lambda_\bullet(\argu)$ denotes the exterior algebra over the ring $\Z_p$.
Since $\ccd_p(\Gamma)\leq 1$, the Hochschild-Serre spectral sequence for homology (cf. \cite[\S~6.8]{weibel:book})
\begin{equation}
\label{eq:HShom}
E_{s,t}^2=H_s(\Gamma,H_t(N,\Z_p(-m)))\ \Longrightarrow H_{s+t}(G,\Z_p(-m))
\end{equation}
is concentrated in the first two columns. Hence, the spectral sequence
collapses at the $E^2$-term, i.e., $E_{s,t}^2=E_{s,t}^\infty$. Thus, 
for $n\geq 1$ one has a short exact sequence
\begin{equation}
\label{eq:tab2}
\xymatrix@C=0.5truecm{
0\ar[r] &H_{n-1}(N,\Z_p(-m))^\Gamma\ar[r]& H_n(G,\Z_p(-m))\ar[r] &H_n(N,\Z_p(-m))_\Gamma\ar[r]&0
}
\end{equation}
if $\ccd_p(\Gamma)=1$, and isomorphisms
\begin{equation}
\label{eq:tab3}
 H^n(G,\Z_p(-m))\simeq H_n(N,\Z_p(-m))_\Gamma
\end{equation}
if $\Gamma$ is a finite group of order coprime $p$.
Here we used the fact that $H_0(\Gamma,\argu)=\argu_\Gamma$ coincides with the
coinvariants of $\Gamma$, and
that $H_1(\Gamma,\argu)=\argu^\Gamma$ coincides with the invariants of $\Gamma$
if $\Gamma$ is a $p$-Poincar\'e duality group of dimension $1$ with 
$\eth_\Gamma=\eqo_\Gamma$.
Since $H_{m-1}(N,\Z_p(-m))^\Gamma$ is a torsion free abelian pro-$p$ group, and as
\begin{equation}
\label{eq:tab4}
H_m(N,\Z_p(-m))_\Gamma=(H_m(N,\Z_p)\otimes\Z_p(-m))_\Gamma\simeq \Lambda_m(N)
\end{equation}
by \eqref{eq:tab1}, one concludes from \eqref{eq:tab2} and \eqref{eq:tab3} that $H_m(G,\Z_p(-m))$ is torsion free.

\begin{prop}
\label{prop:theta}
Let $(G,\theta)$ be a $\theta$-abelian $p$-oriented virtual pro-$p$ group such that
$N=\kernel(\theta)$ is a finitely generated torsion free abelian pro-$p$ group, and
that $\Gamma=\image(\theta)$ is $p$-torsion free.
Then $G$ is a $p$-Poincar\'e duality group of dimension $d=\ccd(G)$, and $\theta$ is cyclotomic.
\end{prop}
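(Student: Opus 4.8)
The plan is to prove the two assertions --- that $G$ is a $p$-Poincar\'e duality group and that $\theta$ is cyclotomic --- separately, after recording the relevant structure. Since $(G,\theta)$ is $\theta$-abelian with $N=\kernel(\theta)=\Zen_\theta(G)$ finitely generated and torsion free, one has $N\simeq\Z_p(1)^r$ as left $\ZpG$-modules, with $r=\rk(N)$, while $\Gamma=\image(\theta)\simeq G/N$ is a closed, hence topologically finitely generated, subgroup of $\Z_p^\times$. In particular $G$ is topologically finitely generated, so the homological analysis of \S\ref{ss:thetaab} is available for $G$, and $\Gamma$ is either finite of order prime to $p$ or a $p$-Poincar\'e duality group of dimension $1$.

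For the Poincar\'e duality statement I would first reduce to the pro-$p$ case. Writing $\pi\colon G\to\Gamma$ for the projection and $O_p(\Gamma)$ for the maximal normal pro-$p$ (equivalently, pro-$p$ Sylow) subgroup of the abelian group $\Gamma$, the open normal subgroup $G_\circ=\pi^{-1}(O_p(\Gamma))$ is a pro-$p$ group sitting in the extension $\triv\to N\to G_\circ\to O_p(\Gamma)\to\triv$, and $[G:G_\circ]=|\Gamma/O_p(\Gamma)|$ is finite and prime to $p$ because $\Z_p^\times/\Xi_p\simeq\F_p^\times$. As $\Gamma$ is $p$-torsion free, $O_p(\Gamma)$ embeds into $\Xi_p\simeq\Z_p$ and is therefore a free abelian pro-$p$ group of rank $e=\ccd_p(\Gamma)\in\{0,1\}$, which is a pro-$p$ Poincar\'e duality group of dimension $e$; similarly $N\simeq\Z_p^r$ is a pro-$p$ Poincar\'e duality group of dimension $r$. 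I would then invoke the extension theorem for Poincar\'e duality pro-$p$ groups (cf. \cite[\S I.4.5]{ser:gal}) to conclude that $G_\circ$ is a pro-$p$ Poincar\'e duality group of dimension $r+e$. To pass from $G_\circ$ to $G$ I would use that, since $[G:G_\circ]$ is prime to $p$, the Hochschild--Serre spectral sequence for $G_\circ\trianglelefteq G$ collapses and yields $H^k_\cts(G,A)\simeq H^k_\cts(G_\circ,A)^{\Gamma/O_p(\Gamma)}$ for every profinite left $\ZpG$-module $A$. Applied with finite coefficients and with $A=\ZpG\simeq\Z_p\dbl G_\circ\dbr^{[G:G_\circ]}$, this gives (PD$_1$)--(PD$_3$) for $G$ with dimension $d=r+e=\ccd_p(G)$. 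I expect this transfer step --- verifying in particular that $H^d_\cts(G,\ZpG)\simeq\Z_p$ while the lower cohomology of the free module vanishes --- to be the main technical obstacle, the rest being formal.

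For cyclotomicity I would argue that the hypotheses are inherited by open subgroups. Given an open $U\subseteq G$, Remark~\ref{rem:thetab}(b) shows $(U,\theta\vert_U)$ is again $\theta$-abelian, with $\kernel(\theta\vert_U)=N\cap U$ a finitely generated torsion free abelian pro-$p$ group of rank $r$ and $\image(\theta\vert_U)$ an open, hence still $p$-torsion free, subgroup of $\Gamma$; in particular $U$ is finitely generated. Thus $U$ satisfies the hypotheses of the proposition, and the computation of \S\ref{ss:thetaab} --- built on the collapse of the homological Hochschild--Serre spectral sequence and culminating in the exact sequence \eqref{eq:tab2}, the isomorphism \eqref{eq:tab3}, and \eqref{eq:tab4} --- shows that $H_m(U,\Z_p(-m))$ is a torsion free $\Z_p$-module for every $m\geq1$. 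By Proposition~\ref{prop:comp} this is equivalent to $H^{m+1}_\cts(U,\Z_p(m))$ being torsion free, i.e.\ $\theta\vert_U$ is $m$-cyclotomic. Since this holds for all $m\geq1$ and all open $U\subseteq G$, the $p$-orientation $\theta$ is cyclotomic, completing the plan.
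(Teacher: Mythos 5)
Your proposal is correct, but the Poincar\'e duality half takes a genuinely different route from the paper. The paper's proof is a two-line appeal to deep machinery: under the hypotheses $G$ is a $p$-torsion free compact $p$-adic analytic group (an extension of a closed, $p$-torsion free subgroup of $\Z_p^\times$ by $\Z_p^r$), so Lazard's theorem in the form of \cite[Thm.~5.1.5]{sw:cpg} immediately gives that $G$ is a $p$-Poincar\'e duality group of dimension $\ccd(G)$; cyclotomicity is then, as in your last paragraph, read off from Proposition~\ref{prop:comp} together with the homological computation of \S~\ref{ss:thetaab}. You instead assemble duality by hand: extension of $PD^e$ by $PD^r$ via the extension theorem for Poincar\'e duality pro-$p$ groups, followed by descent along the open normal subgroup $G_\circ=\pi^{-1}(O_p(\Gamma))$ of index prime to $p$. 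That descent step, which you rightly flag as the main obstacle, does go through: restriction--corestriction (or the collapsing Hochschild--Serre spectral sequence for the prime-to-$p$ quotient) handles (PD$_1$) and (PD$_2$), and for (PD$_3$) one identifies $H^d_{\cts}(G_\circ,\ZpG)\simeq D_{G_\circ}\otimes_{\Z_p}\Z_p[G/G_\circ]$ as an induced module, whose $G/G_\circ$-invariants are $\simeq\Z_p$ by Shapiro's lemma. What each approach buys: the paper's argument is short but imports Lazard's theory wholesale, whereas yours is self-contained at the level of standard closure properties of Poincar\'e duality groups and makes the dualizing data explicit along the way. (Two small caveats: for $p=2$ one has $\Xi_2=\Z_2^\times$, so $\Gamma/O_2(\Gamma)$ is trivial rather than embedding in $\F_2^\times$ --- harmless, since then $G_\circ=G$; and your citation for the extension theorem is better placed at \cite[Thm.~3.7.4]{nsw:cohn} than at Serre's exercises.) On the cyclotomicity half you in fact do slightly more than the paper: the paper's \S~\ref{ss:thetaab} computation is written only for $G$ itself, and your explicit verification that the hypotheses pass to every open subgroup $U$ (via Remark~\ref{rem:thetab}(b) and openness of $\theta(U)$ in $\Gamma$) is exactly the point the paper leaves implicit.
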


\begin{proof}
By hypothesis, $G$ is a $p$-torsion free $p$-adic analytic group. Hence the former assertion is a direct consequence of M.~Lazard's theorem (cf. \cite[Thm.~5.1.5]{sw:cpg}). The latter  follows from Proposition~\ref{prop:comp}.
\end{proof}

From Proposition~\ref{prop:woPD} one concludes the following:

\begin{cor}
\label{cor:tab}
Let $(G,\theta)$ be a $\theta$-abelian pro-$p$ group.
If $p=2$ assume further that $\image(\theta)$ is torsion free.
\begin{itemize}
\item[\textup{(}a\textup{)}] The orientation $\theta$ is cyclotomic.
\item[\textup{(}b\textup{)}] Suppose that $G$ is finitely generated with minimun number of generators $d=d(G)<\infty$.
If $p=2$ assume further that $\image(\theta)\subseteq 1+4\Z_2$.
Then $G$ is a Poincar\'e duality pro-$p$ group of dimension $d$. Moreover, 
$\eth_G=\theta^{d-1}$.
    \item[\textup{(}c\textup{)}] If $G$ satisfies the hypothesis of \textup{(}b\textup{)} and $d(G)\geq 2$, then for $p$ odd, any cyclotomic
orientation $\theta^\prime\colon G\to\Z_p^\times$ of $G$ must coincide with $\theta$, i.e., 
$\theta^\prime=\theta$. For $p=2$ any cyclotomic orientation $\theta^\prime\colon G\to\Z_2^\times$ satisfying 
$\image(\theta^\prime)\subseteq 1+4\Z_2$ must coincide with $\theta$.
\end{itemize}
\end{cor}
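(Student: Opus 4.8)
The plan is to handle the three parts in turn, reducing each to the finitely generated situation treated in Proposition~\ref{prop:theta}, and then feeding the outcome into Proposition~\ref{prop:woPD}.

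For part (a), I would first record that $\Gamma=\image(\theta)$ is torsion free in both cases: for $p$ odd the image of the pro-$p$ group $G$ lands in the torsion free group $1+p\Z_p$, while for $p=2$ this is exactly the standing hypothesis. Since $N=\kernel(\theta)=\Zen_\theta(G)$ satisfies $gxg^{-1}=x^{\theta(g)}$, the group $G$ acts on $N$ by $\Z_p^\times$-scalars, so \emph{every} closed subgroup of $N$ is automatically normal in $G$. As in the closing limit argument in the proof of Theorem~\ref{thm:cycfib}, I would choose an inverse system $(N_i)_{i\in I}$ of such (hence $G$-normal) closed subgroups with $N/N_i$ torsion free of finite rank and $\bigcap_{i\in I}N_i=\triv$. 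Each quotient $(G/N_i,\theta_i)$ is again $\theta$-abelian, with finitely generated torsion free kernel $N/N_i$ and torsion free image $\Gamma$, hence cyclotomically oriented by Proposition~\ref{prop:theta}; Corollary~\ref{cor:comp2} then yields that $\theta$ itself is cyclotomic.

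For part (b), part (a) already gives that $\theta$ is cyclotomic. Finite generation of $G$ forces $N\simeq\Z_p^r$ to be finitely generated and $\Gamma$ to be either trivial or isomorphic to $\Z_p$, so by Proposition~\ref{prop:theta} the group $G$ is a Poincar\'e duality pro-$p$ group of dimension $\ccd_p(G)=\dim(G)=r+\dim(\Gamma)$. The crux is to identify this dimension with $d=d(G)$. Because $\Gamma$ acts on $N$ by scalars lying in $1+p\Z_p$ (for $p$ odd) respectively in $1+4\Z_2$ (for $p=2$), every commutator $[\gamma,a]=a^{\theta(\gamma)-1}$ lies in $N^p$, whence $[G,G]\subseteq N^p$ and $N\cap\Phi(G)=N^p$. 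Reading off the extension $\triv\to N/(N\cap\Phi(G))\to G/\Phi(G)\to\Gamma/\Phi(\Gamma)\to\triv$ gives $d(G)=\dim_{\F_p}H^1(G,\F_p)=r+\dim(\Gamma)=\ccd_p(G)$. Thus $G$ is a Poincar\'e duality pro-$p$ group of dimension $d$, and Proposition~\ref{prop:woPD} yields $\eth_G=\theta^{d-1}$. This is precisely the step where the hypothesis $\image(\theta)\subseteq 1+4\Z_2$ for $p=2$ is needed: it keeps $G$ torsion free and forces the commutators into $N^p$, so that no relation survives modulo $\Phi(G)$ and the generator count matches the dimension.

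For part (c), let $\theta'$ be any cyclotomic orientation of $G$ (with $\image(\theta')\subseteq 1+4\Z_2$ when $p=2$). By part (b), $G$ is a Poincar\'e duality pro-$p$ group of dimension $d$, so Proposition~\ref{prop:woPD} applied to both $\theta$ and $\theta'$ gives $(\theta')^{d-1}=\eth_G=\theta^{d-1}$. Since $G$ is pro-$p$, both $\theta$ and $\theta'$ take values in the torsion free group $1+p\Z_p\simeq\Z_p$ (for $p$ odd) respectively $1+4\Z_2\simeq\Z_2$ (for $p=2$); writing the homomorphism $\chi=\theta'\theta^{-1}$ additively, the identity above reads $(d-1)\,\chi=0$, and as $d-1\geq 1$ with torsion free target we conclude $\chi=0$, i.e. $\theta'=\theta$. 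The hypothesis $d\geq 2$ enters exactly here. The main obstacle throughout is the dimension identification $d(G)=\ccd_p(G)$ of part (b)—the Frattini/commutator computation pinning down the generator count—on which both (b) and (c) ultimately rest.
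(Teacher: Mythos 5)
Your proof is correct and follows essentially the same route as the paper: parts (a) and (b) reduce to Proposition~\ref{prop:theta}, and part (c) combines Proposition~\ref{prop:woPD} with the torsion-freeness of $1+p\Z_p$ (resp.\ $1+4\Z_2$), exactly as in the paper's proof. The only differences are elaborations: in (a) you make explicit the inverse-limit reduction via Corollary~\ref{cor:comp2} (the paper's bare citation of Proposition~\ref{prop:theta} tacitly assumes $\kernel(\theta)$ finitely generated, so your limit argument, modeled on the proof of Theorem~\ref{thm:cycfib}, genuinely fills that in), and in (b) you verify $d(G)=\ccd_p(G)$ by a direct Frattini/commutator computation where the paper simply cites that $G$ is uniformly powerful.
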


\begin{proof}
(a) follows from Proposition~\ref{prop:theta}.

\noindent
(b) By hypothesis, $G$ is uniformly powerful (cf. \cite[Ch.~4]{ddms:padic}), or
equi-$p$-value, as it is called in \cite{lazard:padic}. Hence the claim follows from 
Proposition~\ref{prop:theta}. By Proposition~\ref{prop:woPD}, $\eth_G=\theta^{d-1}$.

\noindent
(c) An element $\phi\in\Hom_{\grp}(G,\Z_p^\times)$ has finite order if, and only if,
$\image(\phi)$ is finite. Proposition~\ref{prop:woPD} and part (b) imply that
$$\theta^{d-1}=\eth_G=(\theta^{\prime})^{d-1}.$$ Hence $(\theta^{-1}\theta^\prime)^{d-1}=\eqo_G$.
For $p$ odd, $\Hom_{\grp}(G,\Z_p^\times)$ does not contain non-trivial elements of finite order.
Hence $\theta^\prime=\theta$. For $p=2$ the hypothesis implies that $\image(\theta^{-1}\theta^\prime)\subseteq 1+4\Z_2$. 
Hence $(\theta^{-1}\theta^\prime)^{d-1}=\eqo_G$ implies that $\theta^\prime=\theta$.
\end{proof}

Note that, by Fact~\ref{fact:ccd1}, Corollary~\ref{cor:tab}(c) cannot hold if $d(G)=1$.

%%%%%%%%%%%%5%%%%%%%%%%%5%%%%%%%%%%%

\subsection{Profinite $p$-Poincar\'e duality groups of dimension 2}
\label{ss:PDdim2} 
As the following theorem shows, 
for a profinite $p$-Poincar\'e duality group $G$ of dimension $2$, 
the dualizing $p$-orientation $\eth_G\colon G\to\Z_p^\times$ is always cyclotomic.

\begin{thm}
\label{thm:PD2}
Let $G$ be a profinite $p$-Poincar\'e duality group of dimension $2$. 
Then $\eth_G\colon G\to\Z_p^\times$ is a cyclotomic $p$-orientation.
\end{thm}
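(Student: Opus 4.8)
The plan is to apply the criterion of Proposition~\ref{fact:p torsion}: the orientation $\eth_G$ is cyclotomic precisely when, for every open subgroup $U\subseteq G$ and every $k\geq 1$, the $\Z_p$-module $H^{k+1}_{\cts}(U,\Z_p(k))$ is torsion free, where $\Z_p(k)$ now carries the action induced by $\eth_G\vert_U^{\,k}$. Since an open subgroup $U$ of a profinite $p$-Poincar\'e duality group of dimension $2$ is again a profinite $p$-Poincar\'e duality group of dimension $2$, with dualizing $p$-orientation $\eth_U=\eth_G\vert_U$, it suffices to carry out the computation for $U=G$ and then repeat it verbatim for each open $U$.

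First I would feed the profinite coefficient module $\Z_p(k)$ into the duality isomorphism \eqref{eq:natPD}, namely $\Tor^G_j(D_G,\argu)\simeq H^{2-j}_{\cts}(G,\argu)$. Taking $j=1-k$, so that $2-j=k+1$, gives $H^{k+1}_{\cts}(G,\Z_p(k))\simeq\Tor^G_{1-k}(D_G,\Z_p(k))$. For $k\geq 2$ the homological index $1-k$ is negative, so the right-hand side vanishes and torsion-freeness is automatic (this also reflects $\ccd_p(G)=2$).

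The only remaining case is $k=1$, and this is where the dualizing orientation does the work. Here $1-k=0$ and $\Tor^G_0=\argu\hotimes_G\argu$, so $H^2_{\cts}(G,\Z_p(1))\simeq D_G\hotimes_G\Z_p(1)$. By the very definition of $\eth_G$, the left $\ZpG$-module ${}^\times D_G$ associated to $D_G$ is $\Z_p(-1)$. Writing the completed tensor product as the coinvariants of $({}^\times D_G)\otimes_{\Z_p}\Z_p(1)$ under the diagonal $G$-action, the two twists cancel --- an element $g$ acts by $\eth_G(g)^{-1}\cdot\eth_G(g)=1$ --- so the module is $G$-trivial and $D_G\hotimes_G\Z_p(1)\simeq\Z_p$, which is torsion free. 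Proposition~\ref{fact:p torsion} then yields that $\eth_G$ is cyclotomic.

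I expect the main technical points to be, first, checking that the duality isomorphism \eqref{eq:natPD} genuinely applies to the profinite module $\Z_p(k)$ and identifies $\Tor^G_0$ with $\hotimes_G$ (both are built into the set-up of \S\ref{ss:PDpor}); and second, the reduction to open subgroups, i.e.\ the standard but essential fact that $\eth_U=\eth_G\vert_U$, which guarantees that the twisted module $\Z_p(1)$ appearing in the cyclotomicity criterion for $U$ is exactly the one governed by the Poincar\'e duality of $U$. Everything else is the cancellation of twists in the final tensor computation.
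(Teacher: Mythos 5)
Your argument is correct, but it takes a genuinely different route from the paper's. You first invoke heredity of Poincar\'e duality --- every open subgroup $U\subseteq G$ is again a profinite $p$-Poincar\'e duality group of dimension $2$ with $\eth_U=\eth_G\vert_U$ --- and then apply the duality \eqref{eq:natPD} \emph{for $U$ itself}, where the crucial case $k=1$ becomes the twist cancellation $H^2_{\cts}(U,\Z_p(1))\simeq D_U\,\hotimes_U\,\Z_p(1)\simeq\Z_p$. The paper never passes to duality for $U$: it works in homology and uses duality only for $G$. Namely, by Proposition~\ref{prop:comp} it suffices that $H_1(U,\Z_p(-1))$ be torsion free, and since ${}^\times D_G\simeq\Z_p(-1)$, the Eckmann--Shapiro lemma together with \eqref{eq:natPD} gives $H_1(U,\Z_p(-1))\simeq\Tor_1^G(D_G,\Z_p\dbl G/U\dbr)\simeq H^1_{\cts}(G,\Z_p\dbl G/U\dbr)\simeq\Hom_{\grp}(U,\Z_p)$, which is visibly torsion free. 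So the paper's route avoids the heredity statement you rely on --- which is indeed standard (the dualizing module of $G$ is also the dualizing module of every open subgroup, whence $\eth_U=\eth_G\vert_U$; the paper itself cites only the Demu\v skin case, \cite[Thm.~3.9.15]{nsw:cohn}, in Corollary~\ref{thm:propPD2}) but is nowhere established for general profinite $p$-Poincar\'e duality groups in this paper --- at the cost of a longer chain of identifications; your route, once heredity is granted, is a one-line computation and yields the sharper conclusion $H^2_{\cts}(U,\Z_p(1))\simeq\Z_p$ rather than bare torsion-freeness. One point you should make explicit: disposing of $k\geq2$ via ``negative Tor index'' tacitly asserts $H^{k+1}_{\cts}(U,\Z_p(k))=0$ above the dualizing dimension; this is true, but it uses (PD$_2$) --- finiteness of the groups $H^2(U,\Z_p(k)/p^n)$ --- to kill the $\varprojlim^1$ obstruction (Mittag-Leffler) when writing $H^{k+1}_{\cts}$ as a limit, which is exactly the step the paper also glosses over when it declares that ``it suffices to show that $H^2_{\cts}(U,\Z_p(1))$ is torsion free''.
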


\begin{proof}
As every $p$-oriented profinite group is $0$-cyclotomic, 
it suffices to show that $H^2_{\cts}(U,\Z_p(1))$ is torsion free
for every open subgroup $U\subseteq G$. 
By Proposition~\ref{prop:woPD}, $\Z_p(-1)\simeq {}^\times D_G$.
Hence, from the Eckmann-Shapiro lemma in homology and \eqref{eq:natPD},
one concludes that
\begin{equation}
\label{eq:PDcyc2}
\begin{aligned}
H_1(U,\Z_p(-1))&=\Tor_1^U(\Z_p,\Z_p(-1))\simeq\Tor_1^U(\Z_p(-1)^\times,\Z_p)\\
&\simeq\Tor_1^G(D_G,\Z_p\dbl G/U\dbr)\simeq H^1_{\cts}(G,\Z_p\dbl G/U\dbr) \\ &\simeq \Hom_{\grp}(U,\Z_p).
\end{aligned}
\end{equation}
Hence $H_1(U,\Z_p(-1))$ is a torsion free $\Z_p$-module, and, by Proposition~\ref{prop:comp}, 
$H_{\cts}^2(U,\Z_p(1))$ is torsion free as well.
\end{proof}

\begin{rem}
Let $G$ be a profinite $p$-Poincar\'e duality group of dimension
$2$, and let $\eth_G\colon G\to\Z_p^\times$ be 
the dualizing $p$-orientation.
Then $(G,\eth_G)$ is not necessarily Bloch-Kato, as the following example shows.

Let $p=2$ and let $A=\PSL_2(q)$ where $q\equiv 3\mod 4$.
Then there exists a $p$-Frattini extension $\pi\colon G\to A$ of $A$
such that $G$ is a $2$-Poincar\'e duality group of dimension $2$,
i.e., $\kernel(\pi)$ is a pro-$2$ group contained in the Frattini subgroup of $G$
(cf. \cite{wei:stand}).
In particular, $G$ is perfect, and thus $\eth_G=\eqo_G$.
Hence $\F_2(1)=\F_2(0)$ is the trivial $\F_2\dbl G\dbr$-module, and
 --- as $G$ is perfect --- $H^1(G,\F_2(1))=0$.
Moreover, $H^2(G,\F_2(2))\simeq\F_2$, as $G$ is a profinite
$2$-Poincar\'e duality group of dimension 2 with $\eth_G=\eqo_G$.
Therefore, $H^\bullet(G,\eqo_G)$ is not quadratic.
\end{rem}

A pro-$p$ group $G$ which satisfies $p$-Poincar\'e duality in dimension $2$
is also called a {\sl Demu\v skin group} (cf. \cite[Def.~3.9.9]{nsw:cohn}).
For this class of groups one has the following.

\begin{cor}
\label{thm:propPD2}
Let $G$ be a Demu\v skin pro-$p$ group.
Then $G$ is a Bloch-Kato pro-$p$ group, and $\eth_G\colon G\to\Z_p^\times$ is a cyclotomic $p$-orientation.
\end{cor}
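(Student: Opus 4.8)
The cyclotomicity of $\eth_G$ requires no new work: a Demu\v{s}kin pro-$p$ group is by definition a profinite $p$-Poincar\'e duality group of dimension $2$, so Theorem~\ref{thm:PD2} applies directly. The plan is therefore to concentrate entirely on the Bloch-Kato property, i.e.\ on showing that $H^\bullet(U,\htheta\vert_U)$ is a quadratic $\F_p$-algebra for every open subgroup $U\subseteq G$.

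First I would reduce to trivial coefficients over an arbitrary Demu\v{s}kin group. Since $G$ is pro-$p$, $\image(\theta)$ is a pro-$p$ subgroup of $\Z_p^\times$; for $p$ odd this forces $\image(\theta)\subseteq 1+p\Z_p$, and for $p=2$ one simply has $\F_2^\times=\{1\}$, so in all cases $\htheta=\eqo_G$. Consequently $\F_p(k)=\F_p$ is the trivial module for every $k$ and $H^\bullet(U,\htheta\vert_U)=H^\bullet(U,\F_p)$. As every open subgroup $U$ of $G$ is again a profinite $p$-Poincar\'e duality pro-$p$ group of dimension $2$ with $\dim_{\F_p}H^2(U,\F_p)=1$, it is itself a Demu\v{s}kin group; hence it suffices to prove that $H^\bullet(G,\F_p)$ is quadratic for an arbitrary Demu\v{s}kin group $G$.

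For this, write $V=H^1(G,\F_p)$ and $n=\dim_{\F_p}V=d(G)$. From $\ccd_p(G)=2$ one has $H^k(G,\F_p)=0$ for $k\geq 3$, while $p$-Poincar\'e duality gives $H^2(G,\F_p)\simeq\F_p$ together with a \emph{non-degenerate} cup-product pairing $b\colon V\otimes V\to\F_p$; moreover $n\geq 2$, since a procyclic pro-$p$ group has cohomological $p$-dimension $1$. The canonical map $\eta\colon\boT(V)\to H^\bullet(G,\F_p)$ is then surjective, and $\kernel(\eta_2)=K:=\kernel(b)$ has codimension $1$ in $V\otimes V$. To verify the quadraticity condition \eqref{eq:quad} I must show that the two-sided ideal generated by $K$ exhausts $\kernel(\eta)$; as this kernel is all of $V^{\otimes k}$ for $k\geq 3$, and by tensoring on the left and right, everything comes down to the single degree-$3$ identity $K\otimes V+V\otimes K=V^{\otimes 3}$.

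This identity is the step I expect to be the main obstacle, and it is where both the non-degeneracy of $b$ and the inequality $n\geq 2$ are genuinely used. The approach is to consider the map $(b_{12},b_{23})\colon V^{\otimes 3}\to V\oplus V$ with $b_{12}(x\otimes y\otimes z)=b(x,y)\,z$ and $b_{23}(x\otimes y\otimes z)=b(y,z)\,x$, so that $\kernel(b_{12})=K\otimes V$ and $\kernel(b_{23})=V\otimes K$. Using non-degeneracy one produces, for each $w\in V$, a nonzero $y_0$ with $b(y_0,w)=0$ (available because $\dim V\geq 2$) and an $x_0$ with $b(x_0,y_0)=1$, so that $x_0\otimes y_0\otimes w\mapsto(w,0)$; the symmetric choice yields $(0,w)$, whence $(b_{12},b_{23})$ is surjective. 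A dimension count then gives $\dim(K\otimes V+V\otimes K)=2(n^3-n)-(n^3-2n)=n^3$, forcing equality with $V^{\otimes 3}$. This establishes quadraticity of $H^\bullet(G,\F_p)$, hence of $H^\bullet(U,\htheta\vert_U)$ for every open $U$, and therefore that $(G,\eth_G)$ is Bloch-Kato.
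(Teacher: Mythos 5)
Your proof is correct, and it follows the same skeleton as the paper's --- cyclotomicity straight from Theorem~\ref{thm:PD2}, quadraticity of $H^\bullet(G,\F_p)$, and heredity to open subgroups via the fact that open subgroups of Demu\v{s}kin groups are again Demu\v{s}kin --- but where the paper disposes of the two non-trivial points by citation (quadraticity is ``well known'' via \cite[\S I.4.5]{ser:gal}, heredity via \cite[Thm.~3.9.15]{nsw:cohn}), you prove both in-line, which is genuine added content. Your explicit reduction to trivial mod-$p$ coefficients is also a point the paper leaves implicit, and it is exactly right: for $p$ odd a pro-$p$ image in $\Z_p^\times$ lands in $1+p\Z_p$, while for $p=2$ the image of $\eth_G$ may well contain $-1$ (e.g.\ the Klein bottle group of Remark~\ref{rem:kb2}), but $\F_2^\times=\{1\}$ makes $\htheta=\eqo_G$ regardless; note this also explains why you never need the paper's extra fact $\eth_U=\eth_G\vert_U$, since only the mod-$p$ twist enters the Bloch--Kato condition. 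Your degree-$3$ argument checks out: $\kernel(b_{12})=K\otimes V$ and $\kernel(b_{23})=V\otimes K$ because tensoring over a field is exact, the surjectivity of $(b_{12},b_{23})$ uses non-degeneracy plus $n\geq 2$ exactly as you say, and the dimension count $2(n^3-n)-(n^3-2n)=n^3$ is what forces $K\otimes V+V\otimes K=V^{\otimes 3}$, which indeed suffices for all higher degrees by left-tensoring. Two small quibbles: your justification of $n\geq 2$ is slightly off as stated, since a \emph{finite} procyclic pro-$p$ group has infinite (not equal to $1$) cohomological $p$-dimension --- though the conclusion stands, because no procyclic pro-$p$ group (trivial, finite, or $\Z_p$) has $\ccd_p=2$, which is what the paper's definition of Demu\v{s}kin demands (in particular this definition excludes $\Z/2\Z$, for which your $n\geq2$ step would genuinely fail, as $H^\bullet(\Z/2\Z,\F_2)\simeq\F_2[x]$); and the heredity claim that open subgroups of $p$-Poincar\'e duality groups are again such of the same dimension deserves at least a reference, since verifying (PD$_3$) for $U$ requires the standard identification $H^k_{\cts}(U,\Z_p\dbl U\dbr)\simeq H^k_{\cts}(G,\Z_p\dbl G\dbr)$.
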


\begin{proof} By Theorem~\ref{thm:PD2}, it suffices to show that $(G,\eth_G)$ is Bloch-Kato.
It is well known that $H^\bullet(G,\hat\eth_G)$ is quadratic (cf. \cite[\S I.4.5]{ser:gal}).
Moreover, every open subgroup $U$ of $G$ is again a Demu\v skin group, with $\eth_U=\eth_G\vert_U$
(cf. \cite[Thm.~3.9.15]{nsw:cohn}).
Hence $(G,\eth_G)$ is Bloch-Kato.
\end{proof}

\begin{rem}
\label{rem:kb2}
[The Klein bottle pro-$2$ group]
Let $G$ be the pro-$2$ group given by the presentation
\begin{equation}
\label{eq:kb2}
G=\langle\, x,y\mid xyx^{-1}y=1\,\rangle
\end{equation}
Then $G$ is a Demu\v skin pro-$2$ group containing the free abelian pro-$2$ group
$H=\langle\,x^2,y \,\rangle$ of rank $2$. Thus, by Corollary~\ref{thm:propPD2} $(G,\eth_G)$ is cyclotomic. 
Since $H^1(G,\II_2(0))\simeq\II_2\oplus\Z/2\Z$, Proposition~\ref{prop:comp} implies that $\eth_G\not=\bone_G$ is non-trivial. In particular, since $\eth_G\vert_H=\bone_H$, this implies that $\image(\eth_G)=\{\,\pm1\,\}$. 
Note that $H=\kernel(\eth_G)$ and that one has a canonical isomorphism
\begin{equation}
\label{eq:isoKB}
H=\langle\,  x^2 \,\rangle\oplus\langle\,  y \,\rangle\simeq\Z_2(0)\oplus\Z_2(1).
\end{equation}
In particular, $(G,\eth_G)$ is not $\eth_G$-abelian.
\end{rem}

%\begin{rem}
%\label{rem:dem}
%The structure of Demu\v skin group has been analyzed in detail by J-P.~Serre (cf. \cite{ser:demu})
%and by J.~Labute (cf. \cite{labute:demushkin}).
%A Demu\v skin group is uniquely determined by the minimum number of generators and by the image of 
%$\eth_G$ (cf. \cite{labute:demushkin}, \cite[\S I.4.5, Exercise~2]{ser:gal}).
%\end{rem}
\begin{ex}
 Let $G$ be the pro-$p$ group with presentation
\[
 G=\langle\,  x,y,z\mid [x,y]=z^{-p} \,\rangle.
\]
If $p=2$ then $G$ is a Demu\v skin group, and $\eth_G\colon G\to\Z_2^\times$ is given by $\eth_G(x)=\eth_G(y)=1$,
$\eth_G(z)=-1$.
On the other hand, if $p\neq2$ then $G$ is not a Demu\v skin group, and any $p$-orientations
$\theta\colon G\to\Z_p^\times$ is not 1-cyclotomic (cf. \cite[Thm.~8.1]{eq:kummer}).
However, $H^\bullet(G,\hat \theta)$
is still quadratic.
\end{ex}

%%%%%%%%%%%%%%%%%%%%%%%%%%%%%%%%%
%%%%%%%%%%%%%%%%%%%%%%%%%%%%%%%%%%

%%%%%%%%%%%%%%%%%%%%%%%%%%%%%%%%%%

%%%%%%%%%%%%%%%%%%%%%%%%%%%%%%%%%%%%%%%%%%%%%%%%%%%%%%%%%%%%%%%%%5%%%%%%%%%%%%%%%%%%%%%%%%%%%
%%%%%%%%%%%%%%%%%%%%%%%%%55
%%%%%%%%%%%%%%%%%%%%%%%%%%%%%%%%%%%%%%%%%%%%%%%%%%%%%%%%%%%%%%%%%%%%%%%%%%%%%%%%%%%%%%%%%%

\section{Torsion}
\label{s:torBK}

It is well known that a Bloch-Kato pro-$p$ group may have non-trivial torsion only if, $p=2$.
More precisely, a Bloch-Kato pro-$2$ group $G$ is torsion if, and only if, $G$ is abelian and of exponent $2$.
Moreover, any such group is a Bloch-Kato pro-$2$ group 
(cf. \cite[\S2]{cq:BK}).
The following result --- which appeared first in \cite[Prop.~2.13]{cq:phd} --- holds for 1-cyclotomically oriented pro-$p$ groups
(see also \cite[Ex.~3.5]{eq:kummer} and \cite[Ex.~14.27]{smooth}).

\begin{prop}\label{prop:tor2}
Let $(G,\theta)$ be a 1-cyclotomically oriented pro-$p$ group.
\begin{itemize}
\item[(a)] If $\image(\theta)$ is torsion free, then $G$ is torsion free.
\item[(b)] If $G$ is non-trivial and torsion, then $p=2$, $G\simeq C_2$ and $\theta$ is injective.
\end{itemize}
\end{prop}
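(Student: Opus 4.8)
The plan is to reduce the presence of torsion in $G$ to the existence of a finite cyclic closed subgroup $\langle x\rangle\cong C_p$ and to read off a cohomological obstruction from it. The preliminary step I would establish first is that the $1$-cyclotomicity of $(G,\theta)$ --- which by Proposition~\ref{prop:comp} is exactly the divisibility of $H^1(U,\II_p(1))$ for every \emph{open} $U$ --- already forces $H^2_{\cts}(C,\Z_p(1))$ to be torsion free for every \emph{closed} subgroup $C\subseteq G$. Indeed, running the direct-limit argument in the proof of Corollary~\ref{cor:comp} in degree $m=1$ only (where it rests on $1$-cyclotomicity rather than full cyclotomicity) gives $H^1(C,\II_p(1))\simeq\varinjlim_{U\in\euB_C}H^1(U,\II_p(1))$, a direct limit of divisible modules and hence divisible; Proposition~\ref{prop:comp} then yields the claim. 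Since any element of order $p$ generates a finite, hence closed, subgroup isomorphic to $C_p$, it then suffices to compute $H^2_{\cts}(C_p,\Z_p(1))$ for the possible actions of $C_p$ on $\Z_p(1)$.

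For part~(a), I would argue by contradiction: if $G$ had torsion, pick $x$ of order $p$; since $\image(\theta)$ is torsion free and $\theta(x)^p=1$, necessarily $\theta(x)=1$, so $C_p=\langle x\rangle$ acts trivially on $\Z_p(1)$. The standard computation for a cyclic group of order $p$ gives $H^2_{\cts}(C_p,\Z_p)\simeq\Z_p/p\Z_p\cong\Z/p\Z$, which has torsion and contradicts the preliminary step. Hence $G$ is torsion free.

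For part~(b), let $G\neq\triv$ be torsion and again take $x$ of order $p$. If $p$ were odd, then $\Z_p^\times$ would contain no element of order $p$ (its torsion subgroup has order $p-1$), forcing $\theta(x)=1$ and the same $\Z/p\Z$-obstruction; so $p=2$. Next I would note that $H^2_{\cts}(C_2,\Z_2(1))$ is torsion free precisely when $C_2$ acts by $-1$ (the computation for the nontrivial action gives $H^2=0$), so every involution $x\in G$ must satisfy $\theta(x)=-1$. To exclude elements of higher order, suppose some $g\in G$ had order $2^k$ with $k\geq2$; then $g^{2^{k-1}}$ is an involution, whence $\theta(g)^{2^{k-1}}=-1$, but $2^{k-1}$ is even, so the left-hand side is a square in $\Z_2^\times$, while $-1\notin(\Z_2^\times)^2=1+8\Z_2$ --- a contradiction. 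Thus every nontrivial element of $G$ has order $2$, so $G$ is abelian of exponent $2$; and two distinct involutions $a,b$ would give a further involution $ab$ with $\theta(ab)=\theta(a)\theta(b)=1$, against $\theta(ab)=-1$. Hence $G$ has a unique involution, $G\simeq C_2$, and $\theta$ is injective.

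The two cohomology computations for $C_p$ and the elementary structure of $\Z_2^\times$ are routine; the point I expect to require the most care is the reduction to closed subgroups, namely the observation that only the degree-$1$ instance of the argument of Corollary~\ref{cor:comp} is needed and that this instance rests solely on $1$-cyclotomicity. Together with the fact that $-1$ is not a square in $\Z_2^\times$ (which is what obstructs torsion of order $\geq4$), this reduction is the heart of the proof.
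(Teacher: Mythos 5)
Your proof is correct and self-contained. Note that the paper does not actually prove Proposition~\ref{prop:tor2}: it defers to \cite[Prop.~2.13]{cq:phd} (see also \cite[Ex.~3.5]{eq:kummer} and \cite[Ex.~14.27]{smooth}), so there is no internal argument to compare line-by-line; the cited sources work with the degree-one lifting criterion for (crossed) homomorphisms, in the style of the paper's own Proposition~\ref{fact:C2Z2}, whereas you argue through a degree-two obstruction. Your route fits the paper's machinery well: the preliminary reduction --- that $1$-cyclotomicity already forces $H^2_{\cts}(C,\Z_p(1))$ to be torsion free for every \emph{closed} $C\subseteq G$ --- is precisely the degree-$1$ instance of the limit argument of Corollary~\ref{cor:comp} combined with Proposition~\ref{prop:comp} applied to $(C,\theta\vert_C)$, and you correctly observe that this instance needs only $1$-cyclotomicity. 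The two local computations are right: for the trivial action, $H^2_{\cts}(C_p,\Z_p)\simeq \Z_p/p\Z_p$ has torsion, while for $C_2$ acting by $-1$ one has $\hat H^0(C_2,\Z_2(1))=\Z_2(1)^{C_2}/N_{C_2}\Z_2(1)=0$, which is exactly what the paper records in Remark~\ref{rem:C2}; so an involution must satisfy $\theta(x)=-1$. The endgame of (b) is also sound: $\theta(g)^{2^{k-1}}=-1$ with $2^{k-1}$ even contradicts $(\Z_2^\times)^2=1+8\Z_2\not\ni -1$, exponent-$2$ groups are abelian, and a second involution $b$ would make $ab$ an involution with $\theta(ab)=1$. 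Two small points deserve an explicit sentence in a final write-up: a nontrivial torsion element of a pro-$p$ group has $p$-power order, hence some power of it has order exactly $p$; and a finite subgroup of a profinite group is automatically closed, so $\langle x\rangle\simeq C_p$ is a legitimate closed subgroup to which your preliminary step applies. What your approach buys relative to the cited one is uniformity: both parts (a) and (b) reduce to a single torsion-freeness criterion for $H^2_{\cts}$ on closed subgroups plus elementary Tate cohomology of cyclic groups, rather than an explicit construction of crossed homomorphisms.
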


\begin{rem}
\label{rem:C2} Let $\theta\colon C_2\to\Z_2^\times$ be an injective homomorphism of groups. Then 
$\Z_2(1)\simeq\omega_{C_2}$ is isomorphic to the augmentation ideal
$$\omega_{C_2}=\kernel(\Z_2[C_2]\to\Z_2).$$  Hence --- by dimension shifting ---
$$H^2(C_2,\Z_2(1))=H^1(C_2,\Z_2(0))=0.$$ Thus --- as $C_2$ has periodic cohomology of period $2$ --- one concludes that $H^s(C_2,\Z_2(t))=0$ for $s$ odd and $t$ even, and also for 
$s$ even and $t$ odd. Hence $(C_2,\theta)$ is cyclotomic.
\end{rem}

%\begin{proof} 
%(a) Suppose that $G$ has non-trivial torsion, and that $\image(\theta)$ is torsion free.
%This would imply that
%$(C_p,\eqo_{C_p})$ is a cyclotomically oriented finite $p$-group.
%But $H^2(C_p,\Z_p)\simeq\Z/p\Z$, and thus $(C_p,\eqo_{C_p})$ is not cyclotomically oriented,
%a contradiction. Part (b) is a direct consequence of (a).
%\end{proof}

From Proposition~\ref{prop:tor2} and the profinite version of Sylow's theorem one concludes the following corollary,
which can be seen as a version of the Artin-Schreier theorem
for 1-cyclotomically $p$-oriented profinite groups.

\begin{cor}\label{cor:torsion}
Let $p$ be a prime number, and let $(G,\theta)$ be a profinite group with
a 1-cyclotomic $p$-orientation.
\begin{itemize}
\item[\textup{(}a\textup{)}] If $p$ is odd, then $G$ has no $p$-torsion.
\item[\textup{(}b\textup{)}] If $p=2$, then every non-trivial $2$-torsion subgroup is isomorphic
to $C_2$. Moreover, if $\image(\theta)$ has no 
$2$-torsion, then $G$ has no $2$-torsion. 
\end{itemize}
\end{cor}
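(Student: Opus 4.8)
The plan is to bootstrap from the pro-$p$ case, which is already settled by Proposition~\ref{prop:tor2}, to an arbitrary profinite group $G$. Torsion in $G$ is always witnessed by a finite cyclic subgroup of prime-power order, and such a subgroup is a finite $p$-group; by the profinite version of Sylow's theorem (cf. \cite{ribzal:book}) it sits inside a Sylow pro-$p$ (for $p=2$, pro-$2$) subgroup, but in fact I only need the cyclic subgroup it generates. The single point that is not purely formal is that $1$-cyclotomicity must descend from $G$ to such a \emph{closed} subgroup, whereas the definition only quantifies over \emph{open} subgroups.

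First I would establish this descent. By Proposition~\ref{fact:p torsion} together with Proposition~\ref{prop:comp} (in degree $m=1$, with $k=1$), $1$-cyclotomicity of $(G,\theta)$ amounts to the divisibility of $H^1(U,\II_p(1))$ for every open subgroup $U\subseteq G$. Now any open subgroup $V$ of a closed subgroup $C\subseteq G$ is itself closed in $G$, and the standard identification
\[
H^1(V,\II_p(1))\simeq\varinjlim_{U\supseteq V}H^1(U,\II_p(1)),
\]
the limit running over the open subgroups $U$ of $G$ containing $V$ (cf. \cite[\S I.2.2, Proposition~8]{ser:gal}), exhibits the left-hand side as a direct limit of divisible modules, hence divisible. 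Applying Proposition~\ref{prop:comp} to $V$ returns that $H^2_{\cts}(V,\Z_p(1))$ is torsion free, and as $V$ runs over the open subgroups of $C$ this shows $(C,\theta\vert_C)$ is again $1$-cyclotomically oriented. This is precisely the degree-one instance of the argument proving Corollary~\ref{cor:comp}, and it matters that it uses only $1$-cyclotomicity rather than full cyclotomicity, so that the hypothesis of Proposition~\ref{prop:tor2} is genuinely available.

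With the descent in hand the two parts follow quickly. For part (a), suppose $G$ had a non-trivial $p$-element; raising it to a suitable power yields $g$ of order exactly $p$, so $\langle g\rangle\simeq C_p$ is a non-trivial torsion pro-$p$ group carrying the $1$-cyclotomic orientation $\theta\vert_{\langle g\rangle}$, and Proposition~\ref{prop:tor2}(b) forces $p=2$, contradicting that $p$ is odd. For part (b) with $p=2$, let $T$ be a non-trivial $2$-torsion subgroup; being a profinite group all of whose elements have $2$-power order it is a torsion pro-$2$ group, so Proposition~\ref{prop:tor2}(b) gives $T\simeq C_2$ together with the injectivity of $\theta\vert_T$. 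Finally, if $\image(\theta)$ has no $2$-torsion, then a hypothetical order-$2$ element $g\in G$ would make $\theta(g)$ an order-$2$ element of $\image(\theta)$ by this injectivity, a contradiction; hence $G$ is torsion free.

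The main obstacle is the descent step: one must check that the open-subgroup criterion for $1$-cyclotomicity really propagates to closed subgroups, and that only degree-one cohomological information is involved. Once that is secured, the reduction through finite cyclic $p$-subgroups and Proposition~\ref{prop:tor2} is routine.
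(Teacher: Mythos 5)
Your proof is correct and takes essentially the same route as the paper, whose entire argument is the one-sentence reduction to Proposition~\ref{prop:tor2} via the profinite version of Sylow's theorem. The one point you rightly single out and verify explicitly --- that $1$-cyclotomicity descends from open to arbitrary closed subgroups via the chain Proposition~\ref{fact:p torsion} $\leftrightarrow$ Proposition~\ref{prop:comp} together with the direct-limit identification $H^1(V,\II_p(1))\simeq\varinjlim_{U\supseteq V}H^1(U,\II_p(1))$, i.e.\ the degree-one instance of the argument for Corollary~\ref{cor:comp} --- is exactly what the paper leaves implicit, and you carry it out correctly.
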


\begin{rem}\label{rem:2tor}
Let $\theta\colon \Z_2\to\Z_2^\times$ be the homomorphism of groups
given by $\theta(1+\lambda)=-1$ and $\theta(\lambda)=1$
for all $\lambda\in 2\Z_2$. Then $\theta$ is a $2$-orientation of $G=\Z_2$ satisfying
$\image(\theta)=\{\pm1\}$. As $\ccd_2(\Z_2)=1$, Fact~\ref{fact:ccd1} implies that $(\Z_2,\theta)$ is Bloch-Kato and cyclotomically
2-oriented.
However, $\image(\theta)$ is not torsion free.
\end{rem}

%%%%%%%%%%%%%%%%%%%%%%%%%%%%%%%%%%%%%%%%%%%%%%%%%%%%%%%%%%%%%%%%%%%%%%%%%%%%%

\subsection{Orientations on $C_2\times\Z_2$}
\label{ss:orC2}
As we have seen in Proposition~\ref{prop:theta}, for $p$ odd, every
$\theta$-abelian oriented pro-$p$ group is cyclotomically $p$-oriented.
For $p=2$, this is not true. Indeed, one has the following.

\begin{prop}
\label{fact:C2Z2}
Any 2-orientation $\theta\colon G\to\Z_2^\times$ on $G\simeq C_2\times \Z_2$ is not 1-cyclotomic.
\end{prop}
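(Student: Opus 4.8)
The plan is to show that the single open subgroup $U=G$ already violates the torsion-freeness criterion of Proposition~\ref{fact:p torsion}: I will prove that $H^2_{\cts}(G,\Z_2(1))\simeq\Z/2\Z$ for \emph{every} $2$-orientation $\theta$, and then conclude from the equivalence (i)$\Leftrightarrow$(iv) there that $\theta$ is not $1$-cyclotomic. Write $G=\langle t\rangle\times\overline{\langle s\rangle}$ with $t^2=1$ and $\overline{\langle s\rangle}\simeq\Z_2$. Since $\pm1$ are the only elements of order dividing $2$ in $\Z_2^\times$, one has $\theta(t)=\eps\in\{\pm1\}$, while $\theta(s)=u$ may be an arbitrary unit; thus the whole family of orientations is captured by these two cases for $\eps$ together with the irrelevant parameter $u$.

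First I would record the cohomology of the normal subgroup $C_2=\langle t\rangle$ with values in $\Z_2(1)$. If $\eps=1$ the module is $C_2$-trivial and $H^0(C_2,\Z_2(1))=\Z_2$, $H^1(C_2,\Z_2(1))=0$, $H^2(C_2,\Z_2(1))=\Z/2\Z$; if $\eps=-1$ the module is the sign representation and $H^0=0$, $H^1=\Z/2\Z$, $H^2=0$. These are routine Tate-cohomology computations for $C_2$. Next I would feed this into the Lyndon--Hochschild--Serre spectral sequence $E_2^{p,q}=H^p_{\cts}(\Z_2,H^q(C_2,\Z_2(1)))\Rightarrow H^{p+q}_{\cts}(G,\Z_2(1))$ attached to $\triv\to C_2\to G\to\Z_2\to\triv$. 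Because $\ccd_2(\Z_2)=1$, the page is concentrated in the two columns $p=0,1$, so it degenerates at $E_2$ and yields short exact sequences $0\to E_2^{1,n-1}\to H^n_{\cts}(G,\Z_2(1))\to E_2^{0,n}\to0$.

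The residual $\Z_2$-action on the coefficient groups $H^q(C_2,\Z_2(1))$ is multiplication by $u$, which acts trivially on every $2$-torsion group since $u\equiv1\pmod 2$; hence $E_2^{0,2}=H^0(\Z_2,H^2(C_2,\Z_2(1)))$ and $E_2^{1,1}=H^1(\Z_2,H^1(C_2,\Z_2(1)))$. For $\eps=1$ this gives $E_2^{0,2}=\Z/2\Z$ and $E_2^{1,1}=0$, while for $\eps=-1$ it gives $E_2^{0,2}=0$ and $E_2^{1,1}=H^1(\Z_2,\Z/2\Z)=\Z/2\Z$; in either case the $n=2$ sequence forces $H^2_{\cts}(G,\Z_2(1))\simeq\Z/2\Z$. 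Having produced $2$-torsion in $H^2_{\cts}(G,\Z_2(1))$, Proposition~\ref{fact:p torsion} shows that $\theta$ fails to be $1$-cyclotomic, and since $\theta$ was arbitrary the claim follows.

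The only point requiring care is the bookkeeping at the edge: one must check that no differential can alter the entries $(0,2)$ and $(1,1)$, which is immediate from the vanishing of all columns $p\geq2$ (every $d_r$ with $r\geq2$ out of these spots lands in a zero column, and every incoming $d_r$ originates in a column of negative index), and that the two-step filtration of $H^2$ has exactly the displayed nonzero entry, so that $H^2$ \emph{is} the torsion group $\Z/2\Z$ rather than merely admitting $\Z/2\Z$ as a sub- or quotient module. This last distinction is where one could otherwise be led astray, since a torsion-free $\Z_2$-module can have $\Z/2\Z$ as a quotient; the concentration in two columns is precisely what pins $H^2$ down exactly.
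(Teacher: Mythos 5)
Your proof is correct, and it takes a genuinely different route from the paper's. The paper argues at the level of cocycles: it first applies Proposition~\ref{prop:tor2} to the order-two element $x$ to get $\theta(x)=-1$, then invokes Labute's realization result \cite[Prop.~6]{labute:demushkin} --- which repackages the surjectivity of $\pi^1\colon H^1_{\cts}(G,\Z_2(1))\to H^1(G,\F_2(1))$ --- to produce a continuous crossed homomorphism with the prescribed values $c(x)=c(y)=1$; evaluating on $xy=yx$ forces $\theta(y)=-1$, and repeating the argument with $xy$ in place of $y$ forces $\theta(xy)=-1$, contradicting $\theta(xy)=\theta(x)\theta(y)=1$. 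You instead compute the obstruction group outright: a two-column Hochschild--Serre computation for $\triv\to C_2\to G\to\Z_2\to\triv$ showing $H^2_{\cts}(G,\Z_2(1))\simeq\Z/2\Z$ for \emph{every} orientation, and then you conclude via the equivalence (i)$\Leftrightarrow$(iv) of Proposition~\ref{fact:p torsion}. Your case analysis ($\eps=\pm1$), the $C_2$-cohomology values, the triviality of the residual $\Z_2$-action on $2$-torsion groups (units of $\Z_2$ are odd), and the edge/filtration bookkeeping in degree $2$ are all correct --- and you rightly flag that the vanishing of the complementary graded piece is what pins down $H^2$ as exactly $\Z/2\Z$ rather than merely admitting it as a subquotient. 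What your approach buys: it identifies the torsion explicitly and uniformly in the parameter $u=\theta(s)$, and it exhibits the failure already at the single subgroup $U=G$, whereas the paper's appeal to Proposition~\ref{prop:tor2} for the closed (non-open) subgroup $\langle x\rangle$ implicitly uses that $1$-cyclotomicity passes to closed subgroups by a limit argument. What the paper's approach buys: it is entirely elementary, with no spectral sequences, and localizes the obstruction in a short explicit computation.

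One technical point you should make explicit: the Hochschild--Serre spectral sequence as cited in this paper (e.g.\ from \cite{nsw:cohn}) is stated for discrete modules, whereas your coefficient module $\Z_2(1)$ is profinite and the cohomology is Tate's continuous-cochain cohomology. This is harmless here but needs a sentence: either run the identical two-column argument with the finite coefficients $\Z_2(1)/2^n$ and pass to the inverse limit (all groups occurring are finite, so $\varprojlim$ is exact and $H^\ast_{\cts}(G,\Z_2(1))\simeq\varprojlim_n H^\ast(G,\Z_2(1)/2^n)$ by the Mittag-Leffler argument already used in the proof of Proposition~\ref{fact:p torsion}), or invoke Jannsen's continuous-cohomology version of the spectral sequence, which applies since every $H^q(C_2,\Z_2(1)/2^n)$ is finite. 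Likewise, the vanishing of the columns $p\geq 2$ for the profinite coefficient modules $H^q(C_2,\Z_2(1))$ follows from $\ccd_2(\Z_2)=1$ plus this same limit argument, not from $\ccd$ alone. With that justification supplied, the proof is complete.
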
 

\begin{proof}
Suppose that $(G,\theta)$ is 1-cyclotomically 2-oriented.
Let $x,y$ be elements of $G$ such that $x^2=1$ and $\mathrm{ord}(y)=2^\infty$, and that $x,y$ generate $G$.
Proposition~\ref{prop:tor2} applied to the cyclic pro-2 group generated by $x$ yields $\theta(x)=-1$.
Put $\theta(y)=1+2\lambda$ for some $\lambda\in\Z_2$.
By \cite[Prop.~6]{labute:demushkin}, if $\theta$ is 1-cyclotomic then for any pair of elements $c_x,c_y\in\Z_2(1)$ there exists
a continuous crossed-homomorphism $c\colon G\to\Z_2(1)$ (i.e., a map satisfying $c(g_1g_2)=c(g_1)+\theta(g_1)c(g_2)$,
cf. \cite[p.~15]{nsw:cohn}) such that $c(x)=c_x$, $c(y)=c_y$.
Set $c_x=c_y=1$.
Then one computes
\[\begin{split}
   & c(xy)=c_x+\theta(x)c_y=1-1=0,\qquad\text{and}\\
   & c(yx)=c_y+\theta(y)c_x=1+1+2\lambda,
  \end{split}\]
which yields $\lambda=-1$.
The element $xy$ has the same properties as $y$. Hence the previously mentioned argument applied to the element $xy$ yields $\theta(xy)=1-2=-1$,
whereas $\theta(xy)=\theta(x)\theta(y)=1$, a contradiction.
\end{proof}

\begin{rem}\label{rem:centralizing}
From Proposition~\ref{prop:tor2} and Proposition~\ref{fact:C2Z2} one deduces that in a 1-cyclotomically $2$-oriented
pro-$2$ group, every element of order 2 is self-centralizing, which is a remarkable property of absolute Galois groups
(cf. \cite[Prop.~2.3]{formrealfields} and \cite[Cor.~2.3]{addistruct}).
\end{rem}

\begin{prop}
\label{prop:orC2}
Let $(G,\theta)$ be a $\theta$-abelian oriented pro-$2$ group.
Then $\theta$ is cyclotomic if, and only if, either
\begin{itemize}
 \item[\textup{(}a\textup{)}] $\image(\theta)$ is torsion free; or 
     \item[\textup{(}b\textup{)}] $\image(\theta)$ has order $2$.
\end{itemize}
In both these cases $(G,\theta)$ is split $\theta$-abelian.
\end{prop}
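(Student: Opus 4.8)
The plan is to exploit the description of a $\theta$-abelian pro-$2$ group as an extension and to read off the possibilities for $\image(\theta)$ from the subgroup lattice of $\Z_2^\times$. Set $N=\kernel(\theta)=\Zen_\theta(G)$, a torsion free abelian pro-$2$ group on which $G$ acts by $gxg^{-1}=x^{\theta(g)}$, and $\Gamma=\image(\theta)$, so that $G$ is an extension $\triv\to N\to G\to\Gamma\to\triv$. Since $\Z_2^\times\simeq\{\pm1\}\times(1+4\Z_2)$ with $1+4\Z_2\simeq\Z_2$, a closed subgroup $\Gamma$ is isomorphic to one of $\triv$, $\Z_2$, $C_2$, or $C_2\times\Z_2$: the first two give the torsion free case (a), $C_2$ gives case (b), and the content of the statement is that cyclotomicity excludes exactly $C_2\times\Z_2$.

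For the splitting and for the implication ``(a) or (b) $\Rightarrow$ cyclotomic'' I would argue as follows. In case (a), $\Gamma$ is torsion free, so $\ccd_2(\Gamma)\le1$, whence $H^2_{\cts}(\Gamma,N)=0$ and the extension splits; cyclotomicity is then Corollary~\ref{cor:tab}(a). In case (b), $\Gamma=\{\pm1\}$ acts on $N$ by inversion; any $g\in G$ with $\theta(g)=-1$ has $g^2\in N$ and commutes with $g^2$ while inverting it, forcing $g^2=1$, so every such lift is an involution and $G\simeq N\rtimes\langle t\rangle$ is split $\theta$-abelian. Realising $(G,\theta)$ as the fibre product $(C_2,\theta_{-1})\boxtimes(G,\theta)$, where $\theta_{-1}$ is the injective orientation of $C_2$ (cyclotomic by Remark~\ref{rem:C2}) and the second factor is split $\theta$-abelian, Remark~\ref{rem:fibprod} identifies this fibre product with $N\rtimes C_2$, and Theorem~\ref{thm:cycfib} yields cyclotomicity (the limit over finite rank quotients of $N$ being handled there). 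This also proves the final clause, that in both cases $(G,\theta)$ is split $\theta$-abelian.

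For the converse, assume $\theta$ is cyclotomic, hence $1$-cyclotomic, and suppose toward a contradiction that $\Gamma\simeq C_2\times\Z_2$. Choose an involution $t$ with $\theta(t)=-1$ (as above) and $a\in G$ with $\gamma:=\theta(a)\in1+4\Z_2$ a topological generator of the torsion free part of $\Gamma$; then $[a,t]=:n_0\in N$, and one computes $[a^2,t]=n_0^{\gamma+1}$ and $[a^2n,t]=n^{2\gamma^2}\,n_0^{\gamma+1}$ for $n\in N$. The key step is to annihilate this commutator: since $\gamma\equiv1\pmod4$ we have $v_2(\gamma+1)=1=v_2(2\gamma^2)$, so $(\gamma+1)(2\gamma^2)^{-1}$ is a $2$-adic unit and $b:=a^2\,n_0^{-(\gamma+1)(2\gamma^2)^{-1}}$ satisfies $[b,t]=1$ while $\theta(b)=\gamma^2$ has infinite order. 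Thus $b\in C_G(t)\setminus\langle t\rangle$, contradicting the fact that in a $1$-cyclotomically $2$-oriented pro-$2$ group every involution is self-centralizing (Remark~\ref{rem:centralizing}; equivalently, $\langle t,b\rangle\simeq C_2\times\Z_2$ would be a $1$-cyclotomically oriented copy of $C_2\times\Z_2$, contradicting Proposition~\ref{fact:C2Z2}). Hence $\Gamma\not\simeq C_2\times\Z_2$, so $\Gamma$ is torsion free or of order $2$.

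The cohomological splitting and the citations are routine; the main obstacle is the explicit $2$-adic correction in the converse, namely verifying that a generator of the torsion free part of $\Gamma$ can be lifted, after squaring and multiplying by a suitable element of $N$, to an element commuting with $t$. This is precisely where a nontrivial torsion free part is used, through the valuation identity $v_2(\gamma+1)=v_2(2\gamma^2)$, and it is what produces the forbidden $C_2\times\Z_2$.
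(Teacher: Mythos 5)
Your proof is correct, and its skeleton matches the paper's: the same three-way split into case (a), case (b), and a converse that manufactures a copy of $C_2\times\Z_2$ inside $G$ to contradict Proposition~\ref{fact:C2Z2}. Your converse is the paper's argument in different coordinates: the paper takes an involution $g$ with $\theta(g)=-1$, an element $h$ with $\theta(h)\in 1+4\Z_2$ nontrivial, sets $z=ghgh^{-1}$ and $\lambda=(1+\theta(h))/2$, and checks that $z^\lambda h^2$ centralizes $g$ --- exactly your correction $b=a^2n_0^{-(\gamma+1)(2\gamma^2)^{-1}}$, resting on the same valuation fact $v_2(\gamma+1)=1$. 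Where you genuinely diverge, you improve on the paper twice. First, for the splitting in case (b) the paper distinguishes Case I (an involution exists above $-1$) from Case II (all lifts of $-1$ have infinite order), ruling out Case II by embedding the Klein bottle pro-$2$ group and invoking Remark~\ref{rem:thetab}(b); your direct computation $g^2=g\cdot g^2\cdot g^{-1}=(g^2)^{\theta(g)}=g^{-2}$, with $g^2\in\Zen_\theta(G)$ torsion free by the $\theta$-abelian hypothesis, shows every lift of $-1$ is an involution and makes Case II vacuous (the paper runs this very computation in its converse paragraph, but there it needlessly invokes cyclotomicity for torsion-freeness of the kernel). Second, for cyclotomicity in case (b) the paper redoes the Hochschild--Serre/Tate-cohomology computation by hand, checking $\hat H^m(C_2,\II_2(m))=0$ via periodicity and treating open subgroups through Lemma~\ref{lem:splGr}; your self-referential fibre product $(C_2,\theta_{-1})\boxtimes(G,\theta)\simeq(G,\theta)$ outsources all of that to Theorem~\ref{thm:cycfib}, and this is not circular: Theorem~\ref{thm:cycfib} only requires the first factor to be cyclotomic (Remark~\ref{rem:C2} for $(C_2,\theta_{-1})$) and the second to be split $\theta$-abelian, which you have already established at that point, and its proof nowhere depends on Proposition~\ref{prop:orC2}. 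What the paper's direct computation buys is independence from the fibre-product machinery and the explicit vanishing $H^k(U,\II_2(k))=0$; what your route buys is economy and a clean conceptual reason why case (b) behaves like the torsion free case. One cosmetic remark: in case (a) the cleaner citation for the splitting is projectivity of $\image(\theta)\simeq\Z_2$ as a pro-$2$ group (as in the paper) rather than a vanishing of $H^2_{\cts}(\Gamma,N)$ for the profinite module $N$, though the two amount to the same thing here.
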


\begin{proof}
Assume first that $\image(\theta)$ is torsion free.
Then the short exact sequence $\{1\}\to\kernel(\theta)\to G\to\image(\theta)\to\{1\}$ splits,
as $\image(\theta)\simeq\Z_2$ is a projective pro-2 group.
Moreover, $(G,\theta)$ is cyclotomic by Proposition~\ref{prop:theta}.

Second assume that $\theta$ is cyclotomic, $p=2$ and that 
$\image(\theta)\supseteq\{\pm1\}$.
If $g\in G$ satisfies $\theta(g)=-1$, 
then $g^2\in\kernel(\theta)=\Zen_\theta(G)$, and consequently 
\[
 g^2=g\cdot g^2\cdot g^{-1}=(g^2)^{\theta(g)}=g^{-2},
\]
i.e., $g^4=1$.
Since $(\kernel(\theta),\mathbf{1})$ is cyclotomically 2-oriented, $\kernel(\theta)$ is torsion free, and one deduces that $g^2=1$.
Therefore, the short exact sequence 
\[\xymatrix{\{1\}\ar[r] & H\ar[r] & G\ar[r] & C_2\ar[r] &\{1\}}\]
splits (here $H=\kernel(\pi\circ\theta)$, where $\pi$ is the canonical epimorphism $\Z_2^\times\twoheadrightarrow\{\pm1\}$).
Since $(H,\theta\vert_H)$ is again cyclotomically 2-oriented and as
$\image(\theta\vert_H)$ is torsion free, $(H,\theta\vert_H)$ is split $\theta\vert_H$-abelian by the previously mentioned argument.
We claim that $H=\kernel(\theta)$.
Indeed, suppose there exists $h\in H$ such that $\theta(h)\neq1$.
Put $\lambda=(1+\theta(h))/2$ and  let $z=ghgh^{-1}=[g,h^{-1}]\in \kernel(\theta)$.
Then --- as $g=g^{-1}$ and $\theta(g)=-1$ --- one has 
\[ \begin{split}
  g(z^{\lambda}h^2)g^{-1} &= (gzg)^{\lambda}\cdot gh^2g \\ &=z^{-\lambda}\cdot(ghg)^{2}
= z^{-\lambda}\cdot(ghgh^{-1}\cdot h)^2 \\ &= z^{-\lambda}\cdot(zhzh^{-1}\cdot h^2)=z^{-\lambda+1+\theta(h)}h^2 \\ &=z^{\lambda}h^2,
 \end{split}\]
i.e., $g$ and $z^\lambda h^2$ commute which implies that 
$\langle\,  g, z^\lambda h^2 \,\rangle\simeq C_2\times\Z_p$
contradicting Proposition~\ref{fact:C2Z2}.
Therefore, $H=\kernel(\theta)$ is a free abelian pro-2 group, and $G\simeq H\rtimes C_2$.

Finally, let $p=2$ and assume that $\image(\theta)=\{\pm1\}$. By Remark~\ref{rem:C2},
we may also assume that $\kernel(\theta)$ is non-trivial. Then, either

\noindent
Case I: $\theta^{-1}(\{-1\})$ contains an element of order $2$ and $(G,\theta)$ is split $\theta$-abelian, i.e., $G\simeq \kernel(\theta)\rtimes C_2$ with $\kernel(\theta)$ a free abelian pro-$2$ group, or

\noindent
Case II: all elements in $x\in\theta^{-1}(\{-1\})$ are of infinite order. Then for $y\in\kernel(\theta)$, the group $K=\langle\,  x,y \,\rangle$ must be isomorphic to the
Klein bottle pro-$2$ group which is impossible as $G$ is $\theta$-abelian and thus contains
only $\theta$-abelian closed subgroups (cf. Remark~\ref{rem:thetab}(b)). Hence Case II is
impossible.

By Lemma~\ref{lem:splGr}, if $U\subseteq G$ is an open subgroup, then either $U\subseteq\kernel(\theta)$,
or $U\simeq V\rtimes C_2$ for some open subgroup $V$ of $\kernel(\theta)$.
In the first case, $(U,\mathbf{1})$ is cyclotomically 2-oriented by Proposition~\ref{prop:theta}.
For the second case, we claim that $H^k(U,\II_2(k))$ is 2-divisibe for all $k\geq1$.

Recall that $\Z_2[C_2]$ has periodic cohomology (of period $2$), and that one has the equalities of $\Z_2\dbl U\dbr$-modules
$\II_2(k)=\II_2(0)$ for $k$ even and $\II_2(k)=\II_2(-1)$ for $k$ odd.
Moreover,  
\begin{equation}\label{eq:tatecoh}
 \begin{split}
  &\hat H^0(C_2,\II_2(0))= \II_2(0)^{C_2}/N_{C_2}\II_2(0)=\II_2(0)/2\cdot \II_2(0)=0,\\
&\hat H^{-1}(C_2,\II_2(-1))= \kernel(N_{C_2})/\omega_{C_2}\II_2(-1)=\II_2(-1)/2\cdot\II_2(-1)=0,
 \end{split} \end{equation}
where $\hat H^k$ denotes Tate cohomology,
$N_{C_2}=\sum_{x\in C_2} x\in\Z_2[C_2]$ is the norm element, and $\omega_{C_2}$ is the augmentation ideal of the group algebra $\Z_2[C_2]$ (cf. \cite[\S~I.2]{nsw:cohn}).
Thus, by \eqref{eq:tatecoh}, one has 
\begin{equation}
\label{eq:tate3b}
H^m(C_2,\II_2(m))=\hat{H}^m(C_2,\II_2(m))\simeq \hat{H}^k(C_2,\II_2(k))=0,
\end{equation}
for all positive integers $m>0$ and $m\equiv k(\mod 2)$.

Suppose first that $V\simeq\Z_2$.
As in the proof of Theorem~\ref{thm:cycfib}, the $E_2$-term of the Hochschild-Serre spectral sequence associated                                                                                                                                                                                                                                                                                                                                                                        
to the short exact sequence $\{1\}\to V\to U\to C_2\to\{1\}$ evaluated on $\II_2(k)$ is concentrated in the first and the second row.
In particular, $d_2^{\bullet,\bullet}=0$ and thus $E_2^{s,t}(\II_2(k))=E_\infty^{s,t}(\II_2(k))$. 
Thus, by Fact~\ref{fact:HI}, for every $k\geq1$ one has a short exact sequence
\[
 \xymatrix@C=0.5truecm{ 0\ar[r] & H^k(C_2,\II_2(k))\ar[r] & H^k(U,\II_2(k))\ar[r] & H^{k-1}(C_2,\II_2(k-1))\ar[r] &0},
\]
and $H^k(C_2,\II_2(k))=0$ by \eqref{eq:tate2}.
Hence, $(U,\theta\vert_U)$ is cyclotomically 2-oriented by Proposition~\ref{prop:comp}.
If $V\simeq\Z_2^n$ with $n>1$, then $H^k(U,\II_2(k))=0$ by induction on $n$ and the previously mentioned argument.
Finally, Corollary~\ref{cor:comp2} yields the claim in case $V$ not finitely generated.
\end{proof}

%%%%%%%%%%%%%%%%%%%%%%%%%%%%%%%%%%
%%%%% Cyc. oriented pro-p, Thomas, 9/3/2016 %%%%%%%%%
%%%%%%%%%%%%%%%%%%%%%%%%%%%%%%%%%%

%%%%%%%%%%%%%%%%%%%%%%%%%%%%%%%%%%
%%%%% Cyc. oriented pro-p, Thomas, 9/3/2016 %%%%%%%%%
%%%%%%%%%%%%%%%%%%%%%%%%%%%%%%%%%%

\section{Cyclotomically oriented pro-$p$ groups}
\label{s:prop}

For a cyclotomically oriented pro-$2$ group $(G,\theta)$ satisfying
$\image(\theta)\subseteq 1+4\Z_2$ one has the following.

\begin{fact}\label{prop:bock}
Let $(G,\theta)$ be a pro-$2$ group with a cyclotomic orientation satisfying $\image(\theta)\subseteq1+4\Z_2$.
Then $\chi\cup\chi=0$ for all $\chi\in H^1(G,\F_2)$, i.e.,
the first Bockstein morphism $\beta^1\colon H^1(G,\F_2)\to H^2(G,\F_2)$ vanishes.
\end{fact}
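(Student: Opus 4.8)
The plan is to prove $\beta^1=0$ directly by exhibiting, for each $\chi\in H^1(G,\F_2)$, a lift of $\chi$ to $H^1(G,\Z/4\Z)$; by the long exact sequence attached to $0\to\F_2\to\Z/4\Z\to\F_2\to0$ this is exactly the statement $\beta^1(\chi)=0$, and since $\beta^1(\chi)=\chi\cup\chi$ for degree-one classes, this gives $\chi\cup\chi=0$.

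First I would unwind the hypothesis. Since $\image(\theta)\subseteq1+4\Z_2\subseteq1+2\Z_2$, the reduction $\htheta\colon G\to\F_2^\times$ is trivial, so $\F_2(1)=\F_2$ and $H^1(G,\F_2(1))=H^1(G,\F_2)$. The stronger condition $\theta(g)\equiv1\pmod4$ for all $g\in G$ guarantees that $G$ acts trivially on $\Z_2(1)/4$, so that $\Z_2(1)/4\simeq\Z/4\Z$ as (trivial) $\ZpG$-modules, compatibly with the reductions $\Z_2(1)\to\Z_2(1)/4\to\Z_2(1)/2=\F_2(1)$. This identification is the precise point where the assumption $\image(\theta)\subseteq1+4\Z_2$ is genuinely used.

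Next I would invoke cyclotomicity. Since $(G,\theta)$ is cyclotomically oriented, Proposition~\ref{fact:p torsion}(ii) (with $U=G$ and $k=1$) tells us that
\[
\pi^1\colon H^1_{\cts}(G,\Z_2(1))\longrightarrow H^1(G,\F_2(1))
\]
is surjective. Thus a given $\chi\in H^1(G,\F_2)=H^1(G,\F_2(1))$ admits a lift $\tilde\chi\in H^1_{\cts}(G,\Z_2(1))$. Pushing $\tilde\chi$ forward along the reduction $\Z_2(1)\to\Z_2(1)/4\simeq\Z/4\Z$ yields a class $\hat\chi\in H^1(G,\Z/4\Z)$, and because $\pi^1$ factors through this reduction the image of $\hat\chi$ under $H^1(G,\Z/4\Z)\to H^1(G,\F_2)$ is again $\chi$.

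Finally, the long exact cohomology sequence of $0\to\F_2\to\Z/4\Z\to\F_2\to0$ reads
\[
\xymatrix@C=0.7cm{\cdots\ar[r] & H^1(G,\Z/4\Z)\ar[r] & H^1(G,\F_2)\ar[r]^-{\beta^1} & H^2(G,\F_2)\ar[r] & \cdots}
\]
and exactness at $H^1(G,\F_2)$ identifies $\kernel(\beta^1)$ with the image of $H^1(G,\Z/4\Z)$. As $\chi$ lies in this image, $\beta^1(\chi)=0$, whence $\chi\cup\chi=0$ for every $\chi$. I expect the only delicate ingredient to be the identification $\Z_2(1)/4\simeq\Z/4\Z$ coming from the hypothesis: it is what allows the surjectivity of $\pi^1$ (furnished by cyclotomicity, a priori only a lift to $\Z_2(1)$) to descend to a genuine lift to $\Z/4\Z$. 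The remaining steps are the formal exactness of the Bockstein sequence and the standard identity $\beta^1(\chi)=\chi\cup\chi$.
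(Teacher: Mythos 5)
Your proof is correct and follows essentially the same route as the paper: both arguments use the hypothesis $\image(\theta)\subseteq1+4\Z_2$ to trivialize the $G$-action on $\Z_2(1)/4$, produce a lift of $\chi$ to $H^1(G,\Z/4\Z)$, and kill $\beta^1(\chi)=\chi\cup\chi$ via exactness of the Bockstein sequence. The only (cosmetic) difference is that you detour through $H^1_{\cts}(G,\Z_2(1))$ using Proposition~\ref{fact:p torsion}(ii), whereas the paper invokes the defining surjectivity of $\pi^1_{2,1}\colon H^1(G,\Z_2(1)/4)\to H^1(G,\F_2)$ (the case $k=1$, $n=2$, $U=G$ of cyclotomicity) directly.
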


\begin{proof}
Since $\image(\theta)\subseteq1+4\Z_2$, the action of $G$ on $\F_2(1)$ is trivial.
The epimorphism of $\Z_2\dbl G\dbr$-modules $\Z_2(1)/4\to\F_2$ induces a long exact sequence 
\begin{equation}
\label{eq:lesBock} 
\xymatrix@C=0.8truecm{ \cdots\ar[r]^-{2\cdot} & H^1(G,\Z_2(1)/4) \ar[r]^-{\pi_{2,1}^1} & H^k(G,\F_2)\ar`r[d]`[l]^{\beta^1} `[dll] `[dl] [dl]   \\
& H^{2}(G,\F_2)\ar[r]^-{2\cdot} & H^{2}(G,\Z_2(1)/4)\ar[r] & \cdots}
\end{equation}
where the connecting homomorphism is the first Bockstein morphism.
Since $\theta$ is cyclotomic, the map $\pi_{2,1}^1$ is surjective, and thus $\beta^1$ is the 0-map.
\end{proof}

%\begin{proof}
%By hypothesis, $C_4=\Z_2(1)/4\Z_2(1)$ is a trivial $\Z_2\dbl G\dbr$-module
%which is isomorphic --- as abelian group --- to the cyclic group of order $4$.
%Since $H^2_{\cts}(G,\Z_2(1))$ is torsion free, one has a commutative diagram with exact rows
%\begin{equation}\label{eq:tor1}
%\xymatrix{H^1_{\cts}\left(G,\Z_2(1)\right)\ar[r]^{4.}\ar[d]_{2.}&
%H^1_{\cts}\left(G,\Z_2(1)\right)\ar[r]\ar@{=}[d]&H^1\left(G,C_4\right)\ar[r]\ar[d]^{\pi}&0\\
%H^1_{\cts}\left(G,\Z_2(1)\right)\ar[r]^{2.}&H^1_{\cts}\left(G,\Z_2(1)\right)\ar[r]&H^1\left(G,\F_2\right)\ar[r]
%\ar[d]^{\beta^1}&0\\
%%&&H^2(G,\F_2)&}
%\end{equation}
%with $\pi$ the canonical map.
%Hence, by the weak four lemma (cf. \cite[Lemma~3.2]{mcl:hom}) applied to the diagram 
%\eqref{eq:tor1} extended by another column of $0$'s to the right, $\pi$ is surjective. 
%Therefore, 
% $\beta^1\colon H^1(G,\F_2)\to H^2(G,\F_2)$
%is the trivial map. 
%\end{proof}

\begin{rem}
\label{rem:fccd}
As before for a finitely generated pro-$p$ group $G$ let $d(G)$ denote its minimum number of generators.
If $p$ is odd and $G$ is a finitely generated Bloch-Kato pro-$p$ group, the cohomology ring
$(H^\bullet(G,\F_p),\cup)$ is a quotient of the exterior $\F_p$-algebra 
$\Lambda_\bullet=\Lambda_\bullet(H^1(G,\F_p))$.
In particular, $\ccd_p(G)\leq d(G)$.
Moreover, $\Lambda_{d(G)}$ is the unique minimal ideal of $\Lambda_\bullet$.
Hence equality of $\ccd_p(G)$ and $d(G)$ is equivalent to $H^\bullet(G,\F_p)$ being isomorphic to $\Lambda_\bullet$.
It is well known that this implies that $G$ is uniformly powerful (cf. \cite[Thm.~5.1.6]{sw:cpg}),
and that there exists a $p$-orientation $\theta\colon G\to\Z_p^\times$ such that $G$ is $\theta$-abelian
(cf. \cite[Thm.~4.6]{cq:BK}).

Let $p=2$, and let $(G,\theta)$ be a cyclotomically oriented Bloch-Kato pro-$2$ group satisfying
$\image(\theta)\subseteq 1+4\Z_2$. Then Proposition~\ref{prop:bock} implies that
the cohomology ring
$(H^\bullet(G,\F_2),\cup)$ is a quotient of the exterior $\F_2$-algebra 
$\Lambda_\bullet=\Lambda_\bullet(H^1(G,\F_2))$, and hence $\ccd_2(G)\leq d(G)$.
If $\ccd_2(G)=d(G)$, the previously mentioned argument,
Proposition~\ref{prop:bock} and
\cite{wei:2deg} imply that $G$ is uniformly powerful.
Finally, \cite[Thm.~4.11]{cq:BK} yields that $G$ is $\theta^\prime$-abelian 
for some orientation $\theta^\prime\colon G\to\Z_2^\times$.
Thus, if $d(G)\geq 2$, one has $\theta=\theta^\prime$ by Corollary~\ref{cor:tab}(c).
\end{rem}

From the above remark and J-P.~Serre's theorem (cf. \cite{ser:cd})
one concludes the following fact.

\begin{fact}
\label{fact:torf}
Let $(G,\theta)$ be a finitely generated cyclotomically oriented 
torsion free Bloch-Kato pro-$2$ group. Then $\ccd_2(G)<\infty$.
\end{fact}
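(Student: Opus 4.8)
The plan is to reduce to the situation already handled in Remark~\ref{rem:fccd}, where $\image(\theta)\subseteq 1+4\Z_2$, by passing to a suitable open subgroup, and then to invoke Serre's theorem to transfer finiteness of the cohomological dimension back to $G$.

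First I would set $U=\theta^{-1}(1+4\Z_2)$. Since $1+4\Z_2$ is an open subgroup of $\Z_2^\times$ of index $2$, the preimage $U$ is open in $G$ with $[G:U]\le 2$, and by construction $\image(\theta\vert_U)\subseteq 1+4\Z_2$. I would then verify that $(U,\theta\vert_U)$ inherits all the standing hypotheses: as an open subgroup of a finitely generated pro-$2$ group, $U$ is again finitely generated; as a closed subgroup of the torsion free group $G$ it is torsion free; and both cyclotomicity and the Bloch-Kato property are conditions imposed on all open subgroups, so since every open subgroup of $U$ is open in $G$, they descend to $(U,\theta\vert_U)$ (cf. Corollary~\ref{cor:Bkop} for the Bloch-Kato part).

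Next, because $\image(\theta\vert_U)\subseteq 1+4\Z_2$, the action of $U$ on $\F_2(1)$ is trivial and the first Bockstein vanishes by Proposition~\ref{prop:bock}; hence Remark~\ref{rem:fccd} applies to $(U,\theta\vert_U)$ and shows that $H^\bullet(U,\F_2)$ is a quotient of the exterior algebra $\Lambda_\bullet(H^1(U,\F_2))$. Consequently $\ccd_2(U)\le d(U)<\infty$, the finiteness of $d(U)$ coming from the finite generation of $U$.

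Finally, $G$ is torsion free and $U$ is an open subgroup of $G$ with $\ccd_2(U)<\infty$, so J-P.~Serre's theorem (cf. \cite{ser:cd}) yields $\ccd_2(G)=\ccd_2(U)<\infty$, as claimed. The proof is thus essentially a formal transfer through an open subgroup; the one point that genuinely requires care is the reduction to the hypothesis $\image(\theta)\subseteq 1+4\Z_2$, since for $p=2$ the Bockstein argument in Proposition~\ref{prop:bock} truly needs this containment in order to guarantee both the triviality of the action on $\F_2(1)$ and the surjectivity supplied by cyclotomicity. The substantial external input is Serre's theorem, whose torsion-freeness hypothesis is exactly the one built into the statement.
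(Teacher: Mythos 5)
Your proof is correct and is essentially the paper's own argument: the paper deduces this fact directly from Remark~\ref{rem:fccd} together with Serre's theorem, leaving implicit exactly the reduction you spell out, namely passing to the open subgroup $U=\theta^{-1}(1+4\Z_2)$ (of index at most $2$), checking that finite generation, torsion-freeness, cyclotomicity and the Bloch-Kato property all restrict to $U$, and then transferring $\ccd_2(U)\le d(U)<\infty$ back to $G$ via Serre's theorem using torsion-freeness. Your verification of these inherited hypotheses is a welcome filling-in of detail rather than a deviation.
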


%%%%%%%%%%%%%%%%%%%%%%%%%

\subsection{Tits' alternative}
\label{ss:2gen}
From Remark~\ref{rem:fccd} one concludes the following.

\begin{fact}
\label{fact:tits}
{\rm (a)}
Let $p$ be odd, and let $G$ be a Bloch-Kato pro-$p$ group satisfying $d(G)\leq 2$.
Then $G$ is either isomorphic to a free pro-$p$ group,
or $G$ is $\theta$-abelian for some orientation $\theta\colon G\to\Z_p^\times$.

\noindent
{\rm (b)} Let $p=2$, and let $(G,\theta)$ be a cyclotomically oriented 
Bloch-Kato pro-$2$ group satisfying $\image(\theta)\subseteq 1+4\Z_2$ and $d(G)\leq 2$.
Then $G$ is either isomorphic to a free pro-$2$ group,
or $G$ is $\theta$-abelian.
\end{fact}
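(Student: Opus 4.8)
The plan is to derive the statement from the cohomological-dimension estimate already recorded in Remark~\ref{rem:fccd}, reducing it to a short case analysis on $\ccd_p(G)$. In both (a) and (b) the hypotheses are precisely those of Remark~\ref{rem:fccd}, so under them the mod-$p$ cohomology ring $H^\bullet(G,\F_p)$ is a quotient of the exterior algebra $\Lambda_\bullet(H^1(G,\F_p))$; its top nonvanishing degree is therefore bounded by $d(G)$, whence $\ccd_p(G)\leq d(G)\leq 2$. Since $\ccd_p(G)\in\{0,1,2\}$, I would split into the two cases $\ccd_p(G)\leq 1$ and $\ccd_p(G)=2$.

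First, suppose $\ccd_p(G)\leq 1$. By the classical cohomological characterization of free pro-$p$ groups (cf. \cite[\S I.3.4]{ser:gal}), $G$ is then a free pro-$p$ group, giving the first alternative. This step is uniform in $p$ and absorbs the degenerate cases $d(G)\leq 1$, in which $G$ is trivial or procyclic and hence automatically of cohomological dimension at most $1$.

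Second, suppose $\ccd_p(G)=2$. Together with $\ccd_p(G)\leq d(G)\leq 2$ this forces the equality $d(G)=2=\ccd_p(G)$, placing us exactly in the situation treated in the equality case of Remark~\ref{rem:fccd}. For $p$ odd, Remark~\ref{rem:fccd} then produces an orientation $\theta\colon G\to\Z_p^\times$ for which $G$ is $\theta$-abelian, which is the second alternative of (a). For $p=2$, the same remark yields an orientation $\theta'$ making $G$ $\theta'$-abelian and, since $d(G)=2\geq 2$ and $\image(\theta)\subseteq 1+4\Z_2$, it further identifies $\theta'$ with the prescribed orientation $\theta$ by Corollary~\ref{cor:tab}(c); hence $G$ is $\theta$-abelian, which is the second alternative of (b).

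I do not anticipate a genuine obstacle, since the analytic substance --- the quadraticity of $H^\bullet(G,\F_p)$, the vanishing of the Bockstein for $p=2$ from Proposition~\ref{prop:bock}, and the passage to a uniformly powerful, $\theta$-abelian group --- is entirely absorbed into Remark~\ref{rem:fccd}. The only delicate point is bookkeeping in the case $p=2$: one must ensure that the orientation witnessing $\theta$-abelianness is the given $\theta$ rather than merely some orientation, and this is exactly what the uniqueness statement Corollary~\ref{cor:tab}(c) guarantees once $d(G)\geq 2$ and $\image(\theta)\subseteq 1+4\Z_2$.
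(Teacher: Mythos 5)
Your proposal is correct and is essentially the paper's own argument: the paper deduces Fact~\ref{fact:tits} directly from Remark~\ref{rem:fccd}, and your case split on $\ccd_p(G)\in\{0,1,2\}$ --- freeness when $\ccd_p(G)\leq 1$, and the equality case $\ccd_p(G)=d(G)=2$ of the remark (with Corollary~\ref{cor:tab}(c) identifying $\theta'=\theta$ when $p=2$) otherwise --- is exactly the reasoning that remark encodes. One harmless slip: a procyclic pro-$p$ group need not have cohomological dimension at most $1$ (a nontrivial finite cyclic group has infinite $\ccd_p$), but this parenthetical justification is redundant, since your bound $\ccd_p(G)\leq d(G)$ from the remark already disposes of the case $d(G)\leq 1$.
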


In \cite[Thm.~4.6]{cq:BK} it was shown, that for $p$ odd any Bloch-Kato pro-$p$ group satisfies
a strong form of Tits' alternative (cf. \cite{tits:alt}), i.e., either $G$ contains a closed non-abelian free pro-$p$ 
subgroup, or there exists a $p$-orientation $\theta\colon G\to\Z_p^\times$ such that $G$ is $\theta$-abelian.
Using the results from the previous subsection and \cite[Thm.~4.11]{cq:BK},
one obtains the following version of
Tits' alternative if $p$ is equal to $2$. 

\begin{prop}\label{prop:pro-2 tits}
Let $(G,\theta)$ be a cyclotomically oriented virtual pro-2 group which is also Bloch-Kato,
such that $\image(\theta)\subseteq 1+4\Z_2$.
Then either $G$ contains a closed non-abelian free pro-$2$ subgroup; or
$G$ is $\theta$-abelian.
\end{prop}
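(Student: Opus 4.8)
The plan is to prove the dichotomy by contraposition: assuming that $G$ contains no closed non-abelian free pro-$2$ subgroup, I will show that $G$ is $\theta$-abelian. First I would record two preliminary reductions. Since $\image(\theta)\subseteq 1+4\Z_2$ is a torsion free pro-$2$ group, the discussion opening \S\ref{s:virtual} shows that the oriented virtual pro-$2$ group $G$ is in fact a pro-$2$ group, and Corollary~\ref{cor:torsion}(b) then forces $G$ to be torsion free. I would also note that the two alternatives are mutually exclusive: a $\theta$-abelian group is an extension of an abelian group by an abelian group, hence metabelian, and therefore cannot contain a non-abelian free pro-$2$ subgroup.

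The core of the argument is a passage from a local to a global statement, with the base case supplied by Fact~\ref{fact:tits}(b). Fix $x,y\in G$ and set $H=\overline{\langle x,y\rangle}$, so that $d(H)\leq 2$. By Corollary~\ref{cor:Bkop} the pair $(H,\theta\vert_H)$ is Bloch-Kato, by Corollary~\ref{cor:comp} together with Proposition~\ref{prop:comp} it is cyclotomically oriented, and clearly $\image(\theta\vert_H)\subseteq\image(\theta)\subseteq 1+4\Z_2$. Hence Fact~\ref{fact:tits}(b) applies and $H$ is either a free pro-$2$ group or $\theta\vert_H$-abelian. If $d(H)=2$ and $H$ were free, it would be a non-abelian free pro-$2$ subgroup of $G$, contrary to hypothesis; and if $d(H)\leq 1$ then $H$ is procyclic and torsion free, so $H$ is trivial or $H\simeq\Z_2$, in which case $\theta\vert_H$ is either trivial or injective (a non-trivial continuous homomorphism into the torsion free group $1+4\Z_2\simeq\Z_2$ is injective), and in either subcase $\Zen_{\theta}(H)=\kernel(\theta\vert_H)$ is torsion free, i.e.\ $H$ is $\theta\vert_H$-abelian. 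Thus in every case $H$ is $\theta\vert_H$-abelian, and the given orientation $\theta$ is the one witnessing this.

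It remains to deduce that $G$ itself is $\theta$-abelian from the fact that every $2$-generated closed subgroup is $\theta$-abelian; this is the heart of the matter and the step I expect to be the main obstacle. Here I would invoke \cite[Thm.~4.11]{cq:BK}: being $\theta$-abelian on all $2$-generated closed subgroups is precisely the \emph{locally powerful} condition, and the cited theorem upgrades it to the global $\theta$-abelianness of $G$. The delicate point specific to $p=2$ is that one must exclude the pathology responsible for the extra hypothesis in Proposition~\ref{fact:C2Z2}, and this is exactly where $\image(\theta)\subseteq 1+4\Z_2$ enters: it guarantees, via Fact~\ref{prop:bock}, that the first Bockstein morphism vanishes, so that $H^\bullet(G,\F_2)$ is governed by the exterior algebra on $H^1(G,\F_2)$ (cf.\ Remark~\ref{rem:fccd}). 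Finally, the consistency of the orientation under this passage---that the resulting $\theta$-abelian structure is carried by the \emph{given} $\theta$ and not some other orientation---is ensured by the uniqueness statement of Corollary~\ref{cor:tab}(c). Combining these, $G$ is $\theta$-abelian, which completes the dichotomy.
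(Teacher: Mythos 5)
Your proposal is correct and follows essentially the same route as the paper's proof: deduce torsion-freeness from $\image(\theta)\subseteq 1+4\Z_2$, obtain the vanishing of the first Bockstein morphism from Fact~\ref{prop:bock}, and conclude via \cite[Thm.~4.11]{cq:BK}. Your intermediate analysis of $2$-generated closed subgroups via Fact~\ref{fact:tits}(b) and the orientation identification via Corollary~\ref{cor:tab}(c) merely make explicit what the paper compresses into its citation of Remark~\ref{rem:fccd}.
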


\begin{proof}
As $\image(\theta)\subseteq 1+4\Z_2$,
Proposition~\ref{prop:tor2}-(a) implies that $G$ is torsion free.
From Proposition~\ref{prop:bock} one concludes that the first Bockstein morphism $\beta^1$ vanishes.
Thus, the hypothesis of \cite[Thm.~4.11]{cq:BK} are satisfied (cf. Remark~\ref{rem:fccd}), and this yields the claim.
\end{proof}

\begin{rem}
\label{rem:p2X}
Note that Proposition~\ref{prop:pro-2 tits} without the hypothesis $\image(\theta)\subseteq 1+4\Z_2$
does not remain true
(cf. Remark~\ref{rem:kb2}).
\end{rem}

%%%%%%%%%%%%%%%%%%%%

\subsection{The $\theta$-center}
\label{ss:thetaZ}
One has the following characterization of the $\theta$-center for a cyclotomically oriented
Bloch-Kato pro-$p$ group $(G,\theta)$.

\begin{thm}\label{prop:Zmxlabeliannormal}
Let $(G,\theta)$ be a cyclotomically oriented torsion free Bloch-Kato pro-$p$ group.
If $p=2$ assume further that $\image(\theta)\subseteq1+4\Z_2$. Then
$\Zen_\theta(G)$ is the unique maximal closed abelian normal subgroup of $G$ contained in $\kernel(\theta)$.
\end{thm}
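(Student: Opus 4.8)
The plan is to prove the statement by showing that $\Zen_\theta(G)$ \emph{contains} every closed abelian normal subgroup of $G$ lying in $\kernel(\theta)$. Since $\Zen_\theta(G)$ is itself such a subgroup --- it is closed, normal, contained in $O_p(\kernel(\theta))=\kernel(\theta)$ (as $G$ is pro-$p$), and abelian because $\Zen_\theta(G)\subseteq\Zen(\kernel(\theta))$ --- this containment immediately yields that it is the \emph{unique} maximal one. So let $A\subseteq\kernel(\theta)$ be an arbitrary closed abelian normal subgroup of $G$. By the defining relation \eqref{eq:thetaZ}, proving $A\subseteq\Zen_\theta(G)$ amounts to checking that $gag^{-1}=a^{\theta(g)}$ for every $a\in A$ and every $g\in G$. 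Fix such a pair with $a\neq1$ (the case $a=1$ being trivial) and set $H=\overline{\langle a,g\rangle}$, a closed subgroup with $d(H)\leq2$.

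First I would verify that the hypotheses descend to $H$, so that Fact~\ref{fact:tits} applies. Bloch-Kato-ness passes to closed subgroups by Corollary~\ref{cor:Bkop}, and cyclotomicity passes as well: by Corollary~\ref{cor:comp} combined with Proposition~\ref{prop:comp}, $H^{m+1}_{\cts}(U,\Z_p(m))$ is torsion free for every $U$ closed in $G$, and the open subgroups of $H$ are of this form. If $p=2$ then $\image(\theta\vert_H)\subseteq\image(\theta)\subseteq1+4\Z_2$. Hence Fact~\ref{fact:tits} gives that $H$ is either $\theta\vert_H$-abelian or free pro-$p$. The $\theta$-abelian case is then immediate: there $\kernel(\theta\vert_H)=\Zen_{\theta\vert_H}(H)$, and since $a\in\kernel(\theta)\cap H=\kernel(\theta\vert_H)$, the definition of the $\theta$-center gives $hah^{-1}=a^{\theta(h)}$ for all $h\in H$, and taking $h=g$ is exactly what is wanted.

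The hard part will be the free pro-$p$ case. The key observation is that, because $A$ is normal in $G$, every $H$-conjugate $hah^{-1}$ ($h\in H\subseteq G$) lies in $A\cap H$; hence the normal closure of $a$ in $H$ is contained in the abelian group $A\cap H$, and it is abelian and (as $a\neq1$) nontrivial. A nonabelian free pro-$p$ group admits no nontrivial abelian closed normal subgroup --- concretely, two commuting elements of a free pro-$p$ group generate a procyclic subgroup, so that such a normal closure forces $C_H(a)$, and therefore $H$, to be procyclic. Thus $H\simeq\Z_p$, so $a$ and $g$ commute and $gag^{-1}=a$; it remains only to check $\theta(g)=1$. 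Writing $H=\overline{\langle t\rangle}$ and $a=t^\alpha$ with $\alpha\neq0$, the condition $a\in\kernel(\theta)$ reads $\theta(t)^\alpha=1$. Now $\image(\theta)$ is torsion free: for $p$ odd it is a pro-$p$ subgroup of $\Z_p^\times$, hence lies in $1+p\Z_p$, while for $p=2$ it lies in $1+4\Z_2$ by hypothesis (in line with Proposition~\ref{prop:tor2}). Therefore $\theta(t)^\alpha=1$ with $\alpha\neq0$ forces $\theta(t)=1$, so $\theta(g)=1$ and $gag^{-1}=a=a^{\theta(g)}$.

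Combining the two cases shows $gag^{-1}=a^{\theta(g)}$ for all $a\in A$ and $g\in G$, i.e. $A\subseteq\Zen_\theta(G)$, which completes the argument. The only point requiring care is the structural input about free pro-$p$ groups used to rule out rank $\geq2$; I would make it precise via the fact that centralizers of nontrivial elements in a free pro-$p$ group are procyclic, which is exactly what collapses $H$ to $\Z_p$ once $a$ has abelian, nontrivial normal closure. Note also that this is precisely the step where the extra assumption $\image(\theta)\subseteq1+4\Z_2$ for $p=2$ is indispensable, matching the role it plays in Fact~\ref{fact:tits}.
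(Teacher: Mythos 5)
Your overall strategy is the paper's own: reduce to the closed subgroup $H=\cl(\langle a,g\rangle)$ with $d(H)\leq 2$, check that the Bloch--Kato property and cyclotomicity descend to closed subgroups (your route via Corollary~\ref{cor:Bkop}, Corollary~\ref{cor:comp} and Proposition~\ref{prop:comp} is the right one), and invoke the Tits alternative of Fact~\ref{fact:tits}; your treatment of the free/procyclic branch --- the normal closure of $a$ in $H$ lies in the abelian group $A\cap H$, which collapses a free $H$ to $\Z_p$, after which torsion-freeness of $\image(\theta)$ gives $\theta(g)=1$ --- matches the paper's cases (i) and (ii). However, there is a genuine gap in the $\theta$-abelian branch when $p$ is odd. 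Fact~\ref{fact:tits}(a) does \emph{not} assert that $H$ is $\theta\vert_H$-abelian: it asserts only that $H$ is $\theta'$-abelian for \emph{some} orientation $\theta'\colon H\to\Z_p^\times$, which a priori has nothing to do with the restriction of $\theta$. (Only for $p=2$ does Fact~\ref{fact:tits}(b) name the given orientation.) From $\theta'$-abelianity you get $hah^{-1}=a^{\theta'(h)}$, not $a^{\theta(h)}$, so your conclusion does not follow as written. The gap is substantive, not cosmetic: if, say, $H\simeq\Z_p^2$, then $H$ is $\bone$-abelian, and nothing in your argument excludes that $\theta\vert_H$ is a nontrivial character, in which case $a$ and $g$ commute while $a^{\theta(g)}\neq a$, exactly the situation the theorem must rule out.

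The missing ingredient is Corollary~\ref{cor:tab}(c): a $\theta'$-abelian pro-$p$ group with $d=2$ (for $p=2$ with image in $1+4\Z_2$) is a Poincar\'e duality group of dimension $2$ with $\eth_H=\theta'$, and therefore admits a \emph{unique} cyclotomic orientation, namely $\theta'$. Since $\theta\vert_H$ is cyclotomic by the descent you already established, this forces $\theta'=\theta\vert_H$ whenever $d(H)=2$, and then your computation goes through; the case $d(H)=1$ must be handled separately, as you in effect do via the procyclic analysis, since Corollary~\ref{cor:tab}(c) genuinely fails for $d=1$ (cf.\ Fact~\ref{fact:ccd1} and the remark following Corollary~\ref{cor:tab}). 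This identification of orientations is precisely what the paper's proof performs in its case (iii). With that one citation inserted at the start of your $\theta$-abelian case, your argument is complete and coincides with the paper's.
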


\begin{proof}
Let $A\subseteq\kernel(\theta)$ be a closed abelian normal subgroup of $G$,
let $z\in A$, $z\not=1$, and let $x\in G$ be an arbitrary element.
Put $C=\cl(\langle\,  x,z \,\rangle)\subseteq G$.
Then either $C\simeq \Z_p$ or $C$ is a 2-generated pro-$p$ group.
Thus, by Fact~\ref{fact:tits}, one has to distinguish three cases:
\begin{itemize}
\item[(i)] $d(C)=1$;
\item[(ii)] $d(C)=2$ and $C$ is isomorphic to a free pro-$p$ group; or
\item[(iii)] $d(C)=2$ and $C$ is $\theta^\prime$-abelian for some $p$-orientation 
$\theta^\prime\colon C\to\Z_p^\times$.
\end{itemize}
In case (i), $x$ and $z$ commute.
If $C$ is generated by $z$, then $C\subseteq\kernel(\theta)$ and $\theta(x)=1$.
If $C$ is generated by $x$, then $z=x^\lambda$ for some $\lambda\in\Z_p$, and $1=\theta(z)=\theta(x)^\lambda$.
Hence $\theta(x)=1$, as $\image(\theta)$ is torsion free.
In both cases $$ xzx^{-1}=z=z^{\theta(x)}.$$

Case (ii) cannot hold: by hypothesis, $A\cap C\not=\triv$,
but free pro-$p$ groups of rank $2$ do not contain non-trivial closed abelian normal subgroups.

Suppose that case (iii) holds. Then $\theta^\prime=\theta\vert_C$ by Corollary~\ref{cor:tab}(c),
and $z\in\kernel(\theta\vert_C)=\Zen_{\theta\vert_C}(C)$.
Therefore, $$xzx^{-1}=z^{\theta\vert_C(x)}=z^{\theta(x)}.$$

Hence we have shown that
for all $z\in A$ and all $x\in G$ one has that $xzx^{-1}=z^{\theta(x)}$. This yields the claim.
\end{proof}

The above result can be seen as the group theoretic generalization of \cite[Corollary~3.3]{ek98:abeliansbgps}
and \cite[Thm.~4.6]{en94:pro2gps}. Note that in the case $p=2$
the additional hypothesis in Theorem~\ref{prop:Zmxlabeliannormal} 
is necessary (cf.~Remark~\ref{rem:kb2}). Indeed, if $G$ is the Klein bottle pro-$2$ group
then $\langle\,  x^2 \,\rangle$ is another maximal closed abelian normal subgroup of $G$
contained in $\kernel(\eth_G)$.

\begin{rem}\label{rem:arithmetic_Z}
 Let $\K$ be a field containing a primitive $p^{th}$-root of unity.
 Theorem~\ref{prop:Zmxlabeliannormal}, together with \cite[Thm.~3.1]{ek98:abeliansbgps}
  and \cite[Thm.~4.6]{en94:pro2gps}, implies that the $\theta_{\K,p}$-center
 of the maximal pro-$p$ Galois group $G_{\K}(p)$ is the inertia group of the maximal {\sl $p$-henselian valuation}
 admitted by $\K$.
\end{rem}

%%%%%%%%%%%%%%%%%%%%%%%5%%%%%%%%%%%%%%%%%%%%%%%%%%%%%%%%%%%%%%%%%%%%%%%%%%%

\subsection{Isolated subgroups}
\label{ss:isolated}
Let $G$ be a pro-$p$ group, and let $S\subseteq G$ be a closed subgroup
of $G$. Then $S$ is called {\sl isolated}, if for all $g\in G$ for which there exists
$k\geq 1$ such that $g^{p^k}\in S$ follows that $g\in S$. Hence a closed normal
subgroup $N$ of $G$ is isolated if, and only if, $G/N$ is torsion free.

\begin{prop}
\label{prop:isoZt}
Let $(G,\theta)$ be an oriented Bloch-Kato pro-$p$ group. In the case $p=2$ assume further that
$\image(\theta)\subseteq 1+4\Z_2$ and that Ê$\theta$ is $1$-cyclotomic. 
Then $\Zen_\theta(G)$ is an isolated subgroup of $G$.
\end{prop}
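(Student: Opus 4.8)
The plan is to mirror the proof of Theorem~\ref{prop:Zmxlabeliannormal}: reduce isolatedness of $\Zen_\theta(G)$ to a statement about $2$-generated closed subgroups and then run each such subgroup through the Tits-alternative trichotomy of Fact~\ref{fact:tits}. First I would record the reductions. Since $G$ is pro-$p$, $\kernel(\theta)$ is pro-$p$ and $O_p(\kernel(\theta))=\kernel(\theta)$, so lying in $\Zen_\theta(G)$ means lying in $\kernel(\theta)$ and satisfying $xyx^{-1}=y^{\theta(x)}$ for all $x\in G$. As $\image(\theta)$ is a pro-$p$ subgroup of $\Z_p^\times$ it is torsion free (for $p$ odd because the pro-$p$ part of $\Z_p^\times$ is $1+p\Z_p$, and for $p=2$ by the hypothesis $\image(\theta)\subseteq 1+4\Z_2$); consequently $G$ is torsion free, by the remark opening \S\ref{s:torBK} when $p$ is odd and by Proposition~\ref{prop:tor2}(a) when $p=2$. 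Now take $g\in G$ with $z:=g^{p^k}\in\Zen_\theta(G)$ for some $k\ge 1$. If $z=1$ then $g=1\in\Zen_\theta(G)$, so assume $z\neq 1$; then $g$ has infinite order and $\cl(\langle g\rangle)\simeq\Z_p$. From $\theta(g)^{p^k}=\theta(z)=1$ and the torsion freeness of $\image(\theta)$ I get $\theta(g)=1$, i.e. $g\in\kernel(\theta)$. It therefore remains to show $hgh^{-1}=g^{\theta(h)}$ for every $h\in G$.

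Fix $h\in G$ and set $C=\cl(\langle g,h\rangle)$, so $d(C)\le 2$. By Corollary~\ref{cor:Bkop}, $(C,\theta\vert_C)$ is Bloch-Kato; for $p=2$ the restriction $\theta\vert_C$ is again $1$-cyclotomic with $\image(\theta\vert_C)\subseteq 1+4\Z_2$, where $1$-cyclotomicity passes to the closed subgroup $C$ (and to its open subgroups) because, by Proposition~\ref{prop:comp}, it is equivalent to divisibility of $H^1(\argu,\II_p(1))$, and $H^1(C,\II_p(1))=\varinjlim_{U\supseteq C}H^1(U,\II_p(1))$ is a direct limit of divisible modules. Thus Fact~\ref{fact:tits} applies to $C$, and I distinguish $d(C)=1$; $d(C)=2$ with $C$ free; $d(C)=2$ with $C$ being $\theta'$-abelian. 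If $d(C)=1$ then $C\simeq\Z_p$ is abelian, so $hzh^{-1}=z$; comparing with $z\in\Zen_\theta(G)$ gives $z=z^{\theta(h)}$, and as $z$ has infinite order $\theta(h)=1$, whence $hgh^{-1}=g=g^{\theta(h)}$. If $C$ is free pro-$p$ of rank $2$, then $\{g,h\}$ is a basis, so in $C^{\ab}\simeq\Z_p^2$ the images $\bar g,\bar h$ form a $\Z_p$-basis; the relation $hzh^{-1}=z^{\theta(h)}$ reads $p^k\bar g=\theta(h)\,p^k\bar g$, forcing $\theta(h)=1$. Then $h$ centralizes $z=g^{p^k}$, and since centralizers of nontrivial elements of a free pro-$p$ group are procyclic, $g$ and $h$ lie in a common procyclic subgroup, contradicting $d(C)=2$; so this case does not occur.

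Finally suppose $C$ is $\theta'$-abelian with $d(C)=2$. Because $\theta(z)=1$ and $z\in\Zen_\theta(G)$, the element $z$ lies in $\Zen_{\theta\vert_C}(C)$ and is nontrivial. Using the explicit structure of a rank-$2$ $\theta'$-abelian group --- either $C\simeq\Z_p^2$, or $C\simeq\kernel(\theta')\rtimes\image(\theta')$ with $\kernel(\theta'),\image(\theta')\simeq\Z_p$ --- a direct computation shows that $\Zen_{\theta\vert_C}(C)\neq\triv$ forces $\theta\vert_C=\theta'$ (in the abelian case it forces $\theta\vert_C=\eqo_C$). Hence $(C,\theta\vert_C)$ is itself $\theta\vert_C$-abelian, so $g\in\kernel(\theta\vert_C)=\Zen_{\theta\vert_C}(C)$ yields $hgh^{-1}=g^{\theta(h)}$. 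Combining the three cases, $hgh^{-1}=g^{\theta(h)}$ holds for every $h\in G$; together with $g\in\kernel(\theta)$ this gives $g\in\Zen_\theta(G)$, proving that $\Zen_\theta(G)$ is isolated.

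The hard part will be case (iii): one must identify $\theta\vert_C$ with the auxiliary orientation $\theta'$ without having full cyclotomicity available (it is not assumed for $p$ odd). The cleanest route seems to be the structural computation above, which exploits that a rank-$2$ $\theta'$-abelian group admits a nontrivial $\theta\vert_C$-central element only when $\theta\vert_C=\theta'$. Alternatively, for $p=2$ one may note that such a $C$ is a Demu\v skin group and invoke Proposition~\ref{prop:woPD} with Corollary~\ref{cor:tab}(b), since in dimension $2$ the $1$-cyclotomicity of $\theta\vert_C$ already pins down the dualizing orientation $\eth_C=\theta\vert_C=\theta'$.
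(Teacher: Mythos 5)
Your proposal is correct and follows essentially the same route as the paper: reduce to the $2$-generated closed subgroups $\cl(\langle g,h\rangle)$, run them through the trichotomy of Fact~\ref{fact:tits} (whose $p=2$ case indeed only needs $1$-cyclotomicity, via the Bockstein argument), and conclude $hgh^{-1}=g^{\theta(h)}$ for all $h\in G$. You merely make explicit two steps the paper compresses --- the exclusion of the free rank-$2$ case via abelianization and procyclic centralizers (the paper just notes $C(g)$ ``is not free''), and the identification $\theta'=\theta\vert_C$ in the $\theta'$-abelian case for $p$ odd (the paper asserts $C(g)$ is $\theta\vert_{C(g)}$-abelian without argument) --- and your direct computation for the latter is valid.
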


\begin{proof}
Suppose there exists $x\in G\smallsetminus\Zen_\theta(G)$ and $k\geq1$ such that $x^{p^k}\in \Zen_\theta(G)$.
By changing the element $x$ if necessary, we may assume that $k=1$, i.e., $x^p\in\Zen_\theta(G)$.
As $G$ is torsion free (cf. Corollary~\ref{cor:torsion}), one has that $x^p\neq1$.

For an arbitrary $g\in G$, the subgroup $C(g)=\cl(\langle\,  g,x \,\rangle)\subseteq G$ is not free, as $gx^pg^{-1}=x^{p\theta(g)}$.
Thus, from Fact~\ref{fact:tits}  one concludes that $C(g)$ is $\theta\vert_{C(g)}$-abelian.
Moreover, as $\image(\theta)$ is torsion-free, $\theta(x^p)=\theta(x)^p=1$ implies that $$x\in\kernel(\theta\vert_{C(g)})=\Zen_{\theta\vert_{C(g)}}(C(g)) .$$
Thus, $x\in\bigcap_{g\in G}\Zen_{\theta_{C(g)}}(C(g))\subseteq\Zen_\theta(G)$.
\end{proof}

Proposition~\ref{prop:isoZt} generalises to profinite groups as follows.

\begin{cor}\label{cor:isoZt}
Let $(G,\theta)$ be a torsion free $p$-oriented Bloch-Kato profinite group.
For $p=2$ assume also that $\image(\theta)\subseteq 1+4\Z_2$ and that
$\theta$ is $1$-cyclotomic.
Then $\Zen_\theta(G)$ is an isolated subgroup of $G$.
\end{cor}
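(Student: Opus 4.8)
The plan is to reduce to the pro-$p$ case of Proposition~\ref{prop:isoZt} by passing to a Sylow pro-$p$ subgroup, and then to glue the local information back together. Suppose $x\in G$ and $k\geq1$ satisfy $x^{p^k}\in\Zen_\theta(G)$, with $x\neq1$; I must show $x\in\Zen_\theta(G)$. First I would check that $x\in\kernel(\theta)$: since $x^{p^k}\in\Zen_\theta(G)\subseteq\kernel(\theta)$ we get $\theta(x)^{p^k}=1$, and for $p$ odd the torsion of $\Z_p^\times$ has order prime to $p$, while for $p=2$ the hypothesis $\image(\theta)\subseteq 1+4\Z_2$ makes $\image(\theta)$ torsion free; in either case $\theta(x)=1$. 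Next, as $G$ is torsion free and $x^{p^k}$ lies in the pro-$p$ group $\Zen_\theta(G)\subseteq O_p(\kernel(\theta))$, the procyclic group $\cl(\langle x\rangle)$ is torsion free with a pro-$p$ power subgroup, forcing $\cl(\langle x\rangle)\cong\Z_p$ to be pro-$p$. Finally I would observe that it is enough to prove $x\in O_p(\kernel(\theta))$: granting this, every conjugate $gxg^{-1}$ also lies in the pro-$p$ \emph{normal} subgroup $O_p(\kernel(\theta))$, the mechanism of the third paragraph yields $gxg^{-1}=x^{\theta(g)}$ for all $g$, and together with $x\in O_p(\kernel(\theta))$ this is exactly the assertion $x\in\Zen_\theta(G)$.

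For the local input, choose a Sylow pro-$p$ subgroup $P$ of $G$ with $\cl(\langle x\rangle)\subseteq P$. By Corollary~\ref{cor:Bkop} the pair $(P,\theta\vert_P)$ is Bloch-Kato, $P$ is torsion free, and for $p=2$ both $\image(\theta\vert_P)\subseteq 1+4\Z_2$ and the $1$-cyclotomicity of $\theta\vert_P$ are inherited (the latter because $1$-cyclotomicity is a statement about all open subgroups, and divisibility of $H^1(\,\cdot\,,\II_p(1))$ passes to closed subgroups exactly as in Corollary~\ref{cor:comp}). Moreover $x^{p^k}\in\Zen_{\theta\vert_P}(P)$, since $Q:=P\cap\kernel(\theta)=O_p(\kernel(\theta\vert_P))$ is a Sylow pro-$p$ subgroup of $\kernel(\theta)$ containing $x^{p^k}$, and the conjugation identities hold a fortiori. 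Proposition~\ref{prop:isoZt} then gives $x\in\Zen_{\theta\vert_P}(P)\subseteq\Zen(Q)$; in particular $x$ is central in $Q$ and $hxh^{-1}=x^{\theta(h)}$ for all $h\in P$.

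The engine that converts ``common pro-$p$ membership'' into the desired equality is the following. Whenever $x$ and a conjugate $gxg^{-1}$ lie in a common pro-$p$ subgroup of $\kernel(\theta)$, the two-generated pro-$p$ group $\cl(\langle x,gxg^{-1}\rangle)$ is Bloch-Kato and torsion free, so Fact~\ref{fact:tits} applies: it is either a free pro-$p$ group or $\theta$-abelian, and since it lies in $\kernel(\theta)$ the latter means abelian. It cannot be non-abelian free, because $x^{p^k}\neq1$ is a central element (it lies in $\Zen(\kernel(\theta))$) whereas a non-abelian free pro-$p$ group has trivial centre. Hence $x$ and $gxg^{-1}$ commute and sit inside a torsion free abelian pro-$p$ group, on which the $p^k$-th power map is injective; the identity $(gxg^{-1})^{p^k}=(x^{p^k})^{\theta(g)}=(x^{\theta(g)})^{p^k}$ then forces $gxg^{-1}=x^{\theta(g)}$.

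The hard part, and the point where the profinite case genuinely departs from the pro-$p$ one, is to secure $x\in O_p(\kernel(\theta))$ in the first place, so that the argument of the previous paragraph can be run for \emph{every} $g\in G$. The Sylow step only shows that $x$ is central in one Sylow pro-$p$ subgroup $Q$ of $\kernel(\theta)$, i.e. that its $\kernel(\theta)$-conjugacy class has length prime to $p$; equivalently, the closed subgroup $S=\{\,g\in G\mid gxg^{-1}=x^{\theta(g)}\,\}$ contains the full Sylow pro-$p$ subgroup $P$, whence $[G:S]$ is prime to $p$. Upgrading this to $S=G$ (equivalently, to the normal closure of $x$ in $\kernel(\theta)$ being pro-$p$) cannot be done by Sylow theory alone: for a general profinite group a pro-$p$ element whose centraliser contains a full Sylow pro-$p$ subgroup need not lie in $O_p$ (e.g. a product of two transpositions in $S_5$ for $p=2$). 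I therefore expect this step to be the main obstacle, and to require the Bloch-Kato hypothesis on $\kernel(\theta)$ and all its closed subgroups, together with torsion freeness, to rule out such non-$p$ behaviour and pin $x$ inside $O_p(\kernel(\theta))$; once that is achieved the conclusion $x\in\Zen_\theta(G)$ follows as indicated above.
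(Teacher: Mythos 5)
Your proposal follows the paper's own proof line for line up to its final step: reduce to a $p$-power root with $\theta(x)=1$ and $\cl(\langle x\rangle)\simeq\Z_p$, pass to a Sylow pro-$p$ subgroup $P$ containing it, check that $(P,\theta\vert_P)$ inherits the hypotheses, and apply Proposition~\ref{prop:isoZt}. Your subsidiary verifications are correct, and in places more careful than the paper's text: that Bloch--Kato-ness passes to closed subgroups is Corollary~\ref{cor:Bkop}, and that $1$-cyclotomicity passes to closed subgroups via the divisibility criterion of Proposition~\ref{prop:comp} and the limit argument of Corollary~\ref{cor:comp} is exactly the right (and needed) point, which the paper leaves implicit. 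One small repair to your ``engine'': Fact~\ref{fact:tits} only yields that $C=\cl(\langle x,gxg^{-1}\rangle)$ is $\theta'$-abelian for \emph{some} orientation $\theta'$ of $C$, not a priori for $\theta\vert_C$, so ``it lies in $\kernel(\theta)$, hence abelian'' is not quite a proof; abelianness should instead be deduced from the nontrivial centre: $x^{p^k}\in Z(C)$, and a $\theta'$-abelian group with $Z(C)\neq\triv$ must have $\theta'$ trivial or $\kernel(\theta')=\triv$ (if $\theta'(c)\neq1$ then $z^{\theta'(c)}=z$ forces $z=1$ in the torsion free abelian group $\kernel(\theta')$), so $C$ is abelian in either case.

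That said, as a standalone proof your proposal is incomplete, by your own admission, and the missing step is genuine: you obtain $hxh^{-1}=x^{\theta(h)}$ only for $h\in P$, and for those $g$ for which $x$ and $gxg^{-1}$ happen to lie in a common pro-$p$ subgroup of $\kernel(\theta)$; securing $x\in O_p(\kernel(\theta))$ (equivalently, running the engine for \emph{every} $g\in G$) is not supplied. Sylow theory inside $\kernel(\theta)$ gets you only part way: applying Proposition~\ref{prop:isoZt} in a Sylow pro-$p$ subgroup of $\kernel(\theta)$ shows any $p^k$-th root of the central element is central in its own Sylow subgroup, whence any two such roots are \emph{conjugate} in $\kernel(\theta)$ --- not equal --- and your $S_5$ caution correctly explains why centrality in one Sylow subgroup does not pin an element inside $O_p$. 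Note that your machinery does close the proof whenever $\kernel(\theta)$ is pro-$p$ (the oriented virtual pro-$p$ case, where $O_p(\kernel(\theta))=\kernel(\theta)$ contains $x$ automatically); the genuinely profinite case is where you stop. For comparison: the paper's own proof consists of precisely your first two paragraphs and then asserts ``which yields the claim'', i.e.\ it passes from $x\in\Zen_{\theta\vert_P}(P)$ to $x\in\Zen_\theta(G)$ without comment. So you have not overlooked an argument that the paper supplies; you have reproduced the paper's argument in full and flagged, correctly, the one transition it leaves unjustified. The verdict remains that your attempt does not prove the statement, but the gap you identify is located exactly at the point where the paper's proof is silent, not at a point where the paper does more than you do.
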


\begin{proof}
Let $x\in\Zen_\theta(G)$, $y\in G$ and $n\in\N$ such that $x=y^n$.
Then $Y=\cl(\langle\,  y \,\rangle)$ is pro-cyclic and virtually pro-$p$.
Thus, as $G$ is torsion free by hypothesis, $Y$ is a cyclic pro-$p$ group,
and $n$ is a $p$-power.
Let $P\in\mathrm{Syl}_p(G)$ be a pro-$p$ Sylow subgroup of $G$ containing $Y$.
Then $(P,\theta\vert_P)$ satisfies the hypothesis of Proposition~\ref{prop:isoZt},
which yields the claim.
\end{proof}

%%%%%%%%%%%%%%%%%%%%%%%%%%%%%%%%5

\subsection{Split extensions}
\label{ss:split}

%A pro-$p$ group $G$ is said to be {\sl $H^\bullet$-quadratic},
%if $H^\bullet(G,\F_p)$ endowed with the cup-product is a quadratic $\F_p$-algebra.
%One has the following property.

\begin{prop}
\label{prop:norm}
Let $(G,\theta)$ be a $p$-oriented Bloch-Kato pro-$p$ group 
of finite cohomological dimension satisfying $\image(\theta)\subseteq1+p\Z_p$
\textup{(}resp. $\image(\theta)\subseteq1+4\Z_2$ if $p=2$\textup{)},
and let $Z$ be a closed normal subgroup of $G$ isomorphic to $\Z_p$ such that
$G/Z$ is torsion free.
Then $Z\not\subseteq G^p[G,G]$.
\end{prop}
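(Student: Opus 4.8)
The plan is to reformulate the inclusion in terms of first cohomology and then contradict it by playing quadraticity of $H^\bullet(G,\F_p)$ against $\ccd_p(G)<\infty$. Set $\bar G=G/Z$ and write $\Phi(G)=\cl(G^p[G,G])$ for the Frattini subgroup. Since $H^1(-,\F_p)=\Hom_{\grp}(-,\F_p)$ and $H^1(Z,\F_p)\simeq\F_p$, one has $Z\subseteq\Phi(G)$ exactly when the restriction $\res\colon H^1(G,\F_p)\to H^1(Z,\F_p)$ vanishes. As $G$ is pro-$p$, the conjugation action on $Z\simeq\Z_p$ has image in the pro-$p$ part of $\Z_p^\times$, so $\bar G$ acts trivially on $H^1(Z,\F_p)$ (for $p=2$ this is automatic, the hypothesis $\image(\theta)\subseteq1+4\Z_2$ being the companion condition guaranteeing trivial action on the $\F_2(k)$). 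Thus the five-term exact sequence of the Hochschild--Serre spectral sequence of $\triv\to Z\to G\to\bar G\to\triv$ reads $0\to H^1(\bar G,\F_p)\xrightarrow{\inf}H^1(G,\F_p)\xrightarrow{\ \res\ }\F_p\xrightarrow{\ \tau\ }H^2(\bar G,\F_p)$, and $Z\subseteq\Phi(G)$ becomes equivalent to the non-vanishing of the transgression class $\zeta:=\tau(1)\in H^2(\bar G,\F_p)$. I argue by contradiction and assume $\zeta\neq0$.

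Because $\ccd_p(Z)=1$, the spectral sequence lives in the two rows $t=0,1$; it degenerates at $E_3=E_\infty$ and its only differential is $d_2=(-)\cup\zeta\colon H^s(\bar G,\F_p)\to H^{s+2}(\bar G,\F_p)$, while $\inf\colon H^\bullet(\bar G,\F_p)\to H^\bullet(G,\F_p)$ is a ring homomorphism. Here Bloch-Kato enters: $H^\bullet(G,\F_p)$ is quadratic, hence generated in degree $1$. Since $\res=0$ gives $H^1(G,\F_p)=\inf\,H^1(\bar G,\F_p)$, a short induction on $n$ yields $H^n(G,\F_p)=\inf\,H^n(\bar G,\F_p)$ for every $n$: as $\inf$ is multiplicative, $H^1(G)\cup H^n(G)=\inf\big(H^1(\bar G)\cup H^n(\bar G)\big)$ lies in $\image(\inf)$, and generation in degree $1$ forces $H^{n+1}(G)\subseteq\image(\inf)$, whence equality. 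By the Wang sequence, $\inf$ being surjective in every degree is equivalent to $(-)\cup\zeta$ being injective in every degree; hence $\zeta$ is a central non-zero-divisor and $H^\bullet(G,\F_p)\cong H^\bullet(\bar G,\F_p)/(\zeta)$. In particular $H^{2k}(\bar G,\F_p)\neq0$ for all $k\geq0$, so $\ccd_p(\bar G)=\infty$.

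Now I bring in $\ccd_p(G)=d<\infty$. Since $H^{m+2}(G,\F_p)=0$ for $m\geq d-1$, the isomorphism $H^\bullet(G)\cong H^\bullet(\bar G)/(\zeta)$ shows that $(-)\cup\zeta\colon H^m(\bar G,\F_p)\to H^{m+2}(\bar G,\F_p)$ is also surjective, hence bijective, for $m\geq d-1$: the cohomology of $\bar G$ is periodic of period $2$. On the other hand $\bar G=G/Z$ is torsion free by hypothesis. The crux of the whole argument is to rule this out: a non-trivial torsion free pro-$p$ group cannot have periodic mod-$p$ cohomology, because periodicity of positive period forces non-trivial $p$-torsion (the pro-$p$ incarnation of the Artin--Tate/Cartan--Eilenberg characterisation of periodic cohomology; equivalently, an eventually invertible class $\zeta\in H^2$ would force $\ccd_p(\bar G)<\infty$, against $H^{2k}(\bar G)\neq0$ for all $k$). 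This contradiction gives $\zeta=0$, i.e.\ $Z\not\subseteq\Phi(G)$; the degenerate case $\bar G=\triv$ is trivial, for then $G=Z\simeq\Z_p$ and $z\notin\Phi(G)=Z^p$.

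I expect this torsion-freeness step to be the real obstacle, and it is precisely where the hypothesis ``$G/Z$ torsion free'' (rather than mere torsion-freeness of $G$, which already follows from $\ccd_p(G)<\infty$ by Serre) is indispensable; the Heisenberg pro-$p$ group, which is \emph{not} Bloch-Kato, satisfies $Z=[G,G]\subseteq\Phi(G)$ and shows that quadraticity cannot be dropped. If one prefers to avoid quoting the periodicity theorem, the alternative is to invoke the Tits alternative for Bloch-Kato pro-$p$ groups (Fact~\ref{fact:tits} and \cite{cq:BK}): when $G$ turns out to be $\theta'$-abelian it is $p$-adic analytic, so $\bar G$ is analytic and $\ccd_p(\bar G)<\infty$, contradicting the previous paragraph; the only case this route misses is when $G$ contains a non-abelian free pro-$p$ subgroup, which still has to be excluded by the periodicity input.
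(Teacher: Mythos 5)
Your reduction is correct and, up to the crux, is essentially the paper's spectral-sequence computation repackaged in Gysin/Euler-class language: $Z\subseteq\Phi(G)$ iff $\res^1_{G,Z}=0$ iff the transgression class $\zeta\in H^2(G/Z,\F_p)$ is non-zero; the two-row Hochschild--Serre spectral sequence gives the Wang sequence with connecting map $\cup\,\zeta$; quadraticity of $H^\bullet(G,\F_p)$ (generation in degree $1$) plus $\res^1=0$ forces $\ifl$ to be surjective in every degree, hence $\cup\,\zeta$ injective in every degree and $H^\bullet(G,\F_p)\simeq H^\bullet(G/Z,\F_p)/(\zeta)$; and $\ccd_p(G)=d<\infty$ then makes $\cup\,\zeta\colon H^m(G/Z,\F_p)\to H^{m+2}(G/Z,\F_p)$ bijective for $m\geq d-1$, while injectivity gives $\zeta^k\neq0$ for all $k$, so $\ccd_p(G/Z)=\infty$. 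All of this is sound. The genuine gap is the step you yourself flag as the crux: the assertion that a non-trivial torsion-free pro-$p$ group cannot have eventually $2$-periodic non-vanishing mod-$p$ cohomology, invoked as ``the pro-$p$ incarnation of Artin--Tate/Cartan--Eilenberg.'' That characterisation of periodic cohomology is a theorem about \emph{finite} groups; its extensions to infinite (pro-)$p$ groups (Farrell--Tate periodicity and the like) all presuppose finite virtual cohomological dimension --- which is exactly what is unknown for $G/Z$ at this point of your argument, since you have just shown $\ccd_p(G/Z)=\infty$ under the assumption $\zeta\neq0$. Your parenthetical ``an eventually invertible class would force $\ccd_p(\bar{G})<\infty$'' is not an argument, and your fallback via the Tits alternative does not close the hole either, as you concede: when $G$ contains a non-abelian free pro-$p$ subgroup (a case that genuinely occurs here, e.g.\ $G=\Z_p\times F$) nothing excludes the periodic behaviour.

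The paper closes precisely this hole \emph{before} running the spectral sequence, and this is where the hypothesis that $G/Z$ is torsion free actually enters: since $\ccd_p(Z)=1$ and $H^1(Z,\F_p)\simeq\F_p$ is finite, the theorem of Weigel--Zalesski{\u\i} (the reference \cite{wz:cd}) gives $\vcd_p(G/Z)=d-1$, and then Serre's theorem (torsion-free profinite groups of finite virtual cohomological dimension have $\ccd_p=\vcd_p$) yields $\ccd_p(G/Z)=d-1<\infty$. With that in hand your own formalism finishes the proof in one line, exactly as the paper's $E_3^{d-1,1}\neq0$ contradiction does: injectivity of $\cup\,\zeta\colon H^{d-1}(G/Z,\F_p)\to H^{d+1}(G/Z,\F_p)=0$ forces $H^{d-1}(G/Z,\F_p)=0$, contradicting $\ccd_p(G/Z)=d-1$ (for pro-$p$ groups the cohomological dimension is detected on $\F_p$). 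So the repair is to replace your unproved periodicity principle by the $\vcd$-theorem plus Serre; your Heisenberg remark (showing quadraticity cannot be dropped) is correct and a nice sanity check.
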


\begin{proof} Let $d=\ccd_p(G)$.
As $\ccd(Z)=1$, and as $H^1(Z,\F_p)\simeq \F_p$,
one has $\vcd_p(G/Z)=d-1$ (cf. \cite{wz:cd}). Thus, as $G/Z$ is torsion free,
J-P.~Serre's theorem (cf. \cite{ser:cd}) implies that $\ccd_p(G/Z)=d-1$.

Suppose that $Z\subseteq G^p[G,G]$. Then 
$\ifl_{G,Z}^1\colon H^1(G/Z,\F_p)\to H^1(G,\F_p)$ is an isomorphism.
For $\chi\in H^1(G,\F_p)$, set $\bar\chi\in H^1(G/Z,\F_p)$ such that $\chi=\ifl_{G,Z}^1(\bar\chi)$.
Then, by \cite[Prop.~1.5.3]{nsw:cohn} one has 
\[
 \chi_1\cup\ldots\cup\chi_k=\ifl_{G,Z}^1(\bar\chi_1)\cup\ldots\cup\ifl_{G,Z}^1(\bar\chi_k)=
 \ifl_{G,Z}^k(\bar\chi_1\cup\ldots\cup\bar\chi_k)
\]
for any $\chi_1,\ldots,\chi_k\in H^1(G,\F_p)$, i.e., 
\begin{equation}
\label{eq:norm}
\ifl_{G,Z}^k\colon H^k(G/Z,\F_p)\longrightarrow H^k(G,\F_p)
\end{equation} 
is surjective for all $k\geq 0$.
Let
\begin{equation}
\label{eq:norm2}
(E_r^{st},d_r)\Rightarrow H^{s+t}(G,\F_p),\qquad E_2^{st}=H^s\left(G/Z,H^t(Z,\F_p)\right)
\end{equation}
denote the Hochschild-Serre spectral sequence associated to
the extension of pro-$p$ groups $Z\to G\to G/Z$ with coefficients in the
discrete $G$-module $\F_p$.
We claim that $E^{st}_\infty$ is concentrated on the buttom row, i.e., $E_\infty^{st}=0$ for all $t\geq1$.
Since $\ccd_p(Z)=1$ and $\ccd_p(G/Z)=d-1$, one has $E_2^{st}=0$ for $t\geq2$ or $s\geq d$. Hence, $d_r^{st}$ is the 0-map for every $s,t\geq0$ and $r\geq3$, i.e., $E_\infty^{st}\simeq E_3^{st}$.
The total complex
$\mathbf{tot}_\bullet(E^{\bullet\bullet}_\infty)$
of the graded $\F_p$-bialgebra $E_\infty^{\bullet\bullet}$ coincides with $H^\bullet(G,\F_p)$,
which is quadratic by hypothesis.
Thus $E_\infty^{\bullet\bullet}$ is generated by 
$$\mathbf{tot}_1(E^{\bullet\bullet}_\infty)=E_\infty^{1,0}=E_2^{1,0}.$$
Hence, $E_3^{st}=0$ for $t\geq1$.

On the other hand, $ H^1(Z,\F_p)$ is a trivial $G/Z$-module isomorphic to $\F_p$, and thus,
as $\ccd_p(G/Z)=d-1$, one has
\begin{equation}
\label{eq:e2}
E^{d-1,1}_2=H^{d-1}\left(G/Z,H^1(Z,\F_p)\right)\not=0.
\end{equation}
Moreover, $d_2^{d-1,1}$ is the 0-map, thus $E_3^{d-1,1}=\ker(d_2^{d-1,1})=E_\infty^{d-1,1}\neq0$,
a contradiction, and this yields the claim.
\end{proof}

Proposition~\ref{prop:norm} has the following consequence.

\begin{prop}
\label{prop:split}
Let $(G,\theta)$ be a $p$-oriented Bloch-Kato pro-$p$ group
(resp. virtual pro-$p$ group) of finite cohomological $p$-dimension,
and let $Z$ be a closed normal subgroup of $G$ isomorphic to $\Z_p$ such that
$G/Z$ is torsion free.
Then there exists a $Z$-complement $C$ in $G$, i.e., the extension of profinite groups
\begin{equation}
\label{eq:ext}
\xymatrix{
\triv\ar[r]&Z\ar[r]&G\ar[r]&G/Z\ar[r]&\triv
}
\end{equation}
splits.
\end{prop}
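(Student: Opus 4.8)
The plan is to reduce the statement to the non-containment $Z\not\subseteq\Phi(G)$ furnished by Proposition~\ref{prop:norm}, and then to manufacture a complement by an iterated Frattini argument. The conclusion --- splitting of \eqref{eq:ext} --- is a purely group-theoretic assertion about the pair $(G,Z)$, so the orientation enters only through the Bloch-Kato hypothesis, which for a pro-$p$ group is orientation-free (as $\htheta=\eqo_G$, hence $H^\bullet(U,\htheta\vert_U)=H^\bullet(U,\F_p)$). I would first treat the pro-$p$ case. When $p$ is odd, $G$ being pro-$p$ forces $\image(\theta)\subseteq1+p\Z_p$ automatically, so Proposition~\ref{prop:norm} applies to $(G,\theta)$ with the normal subgroup $Z$ and gives $Z\not\subseteq\Phi(G)=\cl(G^p[G,G])$. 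When $p=2$ the hypothesis $\image(\theta)\subseteq1+4\Z_2$ is absent from Proposition~\ref{prop:split}; here one checks that this hypothesis is used in the proof of Proposition~\ref{prop:norm} only to guarantee that $G/Z$ acts trivially on $H^1(Z,\F_p)\simeq\F_p$, and for $p=2$ this triviality is automatic, $\Aut(\F_2)$ being trivial. Thus $Z\not\subseteq\Phi(G)$ in all cases.

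Next I would build a descending chain of open subgroups. Choose a continuous homomorphism $\chi_0\in\Hom(G,\F_p)=H^1(G,\F_p)$ not vanishing on a topological generator $z_0$ of $Z$ --- possible precisely because $Z\not\subseteq\Phi(G)$ --- and set $G_1=\kernel(\chi_0)$, so that $G_1\cap Z=Z^p$ and $G_1Z=G$. Each $G_n$ is open in $G$, hence Bloch-Kato of the same finite cohomological $p$-dimension, and $G_n/(G_n\cap Z)=G_n/Z^{p^n}\simeq G_nZ/Z=G/Z$ is torsion free; therefore Proposition~\ref{prop:norm} applies to $(G_n,\theta\vert_{G_n})$ with the normal subgroup $Z^{p^n}\simeq\Z_p$ (the containment of its orientation in $1+p\Z_p$, resp.\ the triviality of the action mod $2$, being inherited), and yields $Z^{p^n}\not\subseteq\Phi(G_n)$. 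Iterating this, I obtain open subgroups $G=G_0\supseteq G_1\supseteq\cdots$ with $G_n\cap Z=Z^{p^n}$ and $G_nZ=G$ for all $n$.

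I would then put $C=\bigcap_{n\geq0}G_n$, a closed subgroup. Since $\bigcap_n Z^{p^n}=\triv$, one gets $C\cap Z=\triv$. To see that $C$ surjects onto $G/Z$, fix $g\in G$ and consider the closed subsets $F_n=gZ\cap G_n$ of the compact coset $gZ$; each is nonempty (as $g\in G=G_nZ$) and they are nested, so by the finite intersection property $\bigcap_n F_n=gZ\cap C\neq\emptyset$, whence $g\in CZ$. Thus $CZ=G$, and as $Z$ is normal this gives $G=Z\rtimes C$, settling the pro-$p$ case. For the oriented virtual pro-$p$ case, which is non-trivial only for $p$ odd since every oriented virtual pro-$2$ group is already pro-$2$, I would pass to the open normal pro-$p$ subgroup $G^\circ=\kernel(\htheta)$ of index prime to $p$; one has $Z\subseteq G^\circ$ because $Z$ is pro-$p$, so $G^\circ=Z\rtimes C_0$ by the pro-$p$ case. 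To descend the splitting to $G$, observe that the extension class lies in $H^2_{\cts}(G/Z,Z)$, a $\Z_p$-module on which multiplication by $[G:G^\circ]$ is invertible; hence restriction to $H^2_{\cts}(G^\circ/Z,Z)$ is injective, and vanishing over $G^\circ$ forces vanishing over $G$.

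The step I expect to be the most delicate is the legitimacy, when $p=2$, of invoking Proposition~\ref{prop:norm} without the hypothesis $\image(\theta)\subseteq1+4\Z_2$: one must verify that its proof uses that hypothesis solely to make $G/Z$ act trivially on $H^1(Z,\F_2)$, and that this triviality holds unconditionally. Granting this, the remaining ingredients --- the iterated application of Proposition~\ref{prop:norm} to the open subgroups $G_n$ and the compactness argument securing $CZ=G$ --- are the technical heart of the argument but present no genuine obstruction.
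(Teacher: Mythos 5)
Your proposal is correct, and in the pro-$p$ case it is essentially the paper's own argument: iterate Proposition~\ref{prop:norm} to produce a descending chain of open subgroups $G_n$ with $G_nZ=G$ and $G_n\cap Z=Z^{p^n}$, and take $C=\bigcap_n G_n$; you merely make explicit two points the paper leaves implicit (the compactness argument giving $CZ=G$, and the verification that $G_n/Z^{p^n}\simeq G/Z$ is torsion free so that Proposition~\ref{prop:norm} can be re-applied). Your discussion of the $p=2$ mismatch is also a genuine and correct observation: Proposition~\ref{prop:split} carries no hypothesis on $\image(\theta)$, while Proposition~\ref{prop:norm} assumes $\image(\theta)\subseteq 1+4\Z_2$; inspecting its proof, the orientation enters only through the triviality of the action on $\F_2(k)$ (identifying $H^\bullet(G,\htheta)$ with $H^\bullet(G,\F_2)$, automatic since $\F_2^\times$ is trivial) and through the triviality of the $G/Z$-action on $H^1(Z,\F_2)$ (automatic since $\Aut(\F_2)$ is trivial), so your patch is needed and valid --- the paper simply cites Proposition~\ref{prop:norm} without comment. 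Where you genuinely diverge is the oriented virtual pro-$p$ case: the paper stays inside the equivariant framework of \S~\ref{s:virtual}, writing $G$ as a $\bSigma$-virtual pro-$p$ group in alternative form and using Proposition~\ref{prop:maxSig} to choose each maximal subgroup $C_k$ to be $\bSigma$-invariant, so that $C=\bigcap_k C_k$ inherits the $\bSigma$-action and $H=C\rtimes\bSigma$ is the complement; you instead split over the open normal pro-$p$ subgroup $G^\circ=\kernel(\htheta)$ (noting $Z\subseteq G^\circ$ since $Z$ is pro-$p$) and then kill the extension class in $H^2_{\cts}(G/Z,Z)$ by the restriction--corestriction argument, using that $[G:G^\circ]$ divides $p-1$ and is therefore invertible on a $\Z_p$-module. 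Both routes work: the paper's is constructive and avoids any appeal to the cohomological classification of extensions, while yours is shorter and more standard (a Schur--Zassenhaus-style averaging) at the cost of two background facts you should cite or check --- that extensions of profinite groups with abelian profinite kernel $Z\simeq\Z_p$ are classified by continuous-cochain $H^2$ (continuous sections of profinite quotient maps exist), and that corestriction with $\mathrm{cor}\circ\mathrm{res}=[G:G^\circ]$ is available in continuous cochain cohomology with profinite coefficients for open subgroups, which holds via the usual cochain-level transfer but is not stated in the paper's references in exactly this form.
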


\begin{proof}
Assume first that $G$ is a pro-$p$ group.
By Proposition~\ref{prop:norm}, one has that $Z\not\subseteq \Phi(G)=G^p[G,G]$.
Hence there exists a maximal closed subgroup $C_1$ of $G$ such that
$$C_1Z=G\qquad\text{and}\qquad Z_1=C_1\cap Z=Z^p.$$ Moreover, $Z_1$ is a closed
normal subgroup in $C_1$ such that $C_1/Z_1$ is torsion free and $Z_1\simeq \Z_p$.
From Proposition~\ref{prop:norm} again, one concludes that $Z_1\not\subseteq\Phi(C_1)$.
Thus repeating this process one finds open subgroup $C_k$ of $G$ of index $p^k$
such that $$C_k\,Z=G\qquad\text{and}\qquad Z_k=C_k\cap Z=Z^{p^k}.$$
Hence $C=\bigcap_{k\geq 1} C_k$ is a $Z$-complement in $G$.

If $G$ is a $p$-oriented virtual pro-$p$ group, then $G$ is a
$\bSigma$-virtual pro-$p$ group for $\bSigma=\image(\hat{\theta})$
(cf. \ref{ss:Obsig}), and thus corresponds to $(O_p(G),\theta^\circ,\gamma)$
in alternative form. In particular, the maximal subgroup $C_1$ and hence all closed subgroups
$C_k$ can be chosen to be $\bSigma$-invariant (cf. Proposition~\ref{prop:maxSig}).
Hence $C=\bigcap_{k\in\N} C_k$ carries canonically a left $\bSigma$-action, and thus defines a $Z$ complement $H=C\rtimes\bSigma$ in $G$.
\end{proof}

The proof of Theorem~\ref{thmC} can be deduced from Proposition~\ref{prop:split} as follows.

\begin{proof}[Proof of Theorem \ref{thmC}]
Assume first that $G$ is either pro-$p$, or virtually pro-$p$.
To prove statement (i) (and (ii)), we proceed by induction on $d=\ccd_p(G)=\ccd(G)$. For $d=1$, $G$ is free (resp. virtually free) (cf. \cite[Prop.~3.5.17]{nsw:cohn}), and thus $\Zen_\theta(G)=\{1\}$.
So assume that $d\geq 1$,
and that the claim holds for $d-1$.
Note that $\Zen_\theta(G)$ is a finitely generated abelian pro-$p$
group satisfying $$d_\circ=d(\Zen_\theta(G))=\ccd_p(\Zen_\theta(G))\leq d.$$ If $d_\circ=0$, there is nothing to prove.
If $d_\circ\geq 1$, $\Zen_\theta(G)$ contains an isolated closed subgroup $Z$
satisfying $d(Z)=1$. By definition, $Z$ is normal in $G$. Hence
Proposition~\ref{prop:split} implies that there exists a subgroup $C\subseteq G$
satisfying $C\cap Z=\triv$ and $C\, Z=G$.
As $C\simeq G/Z$, the main result of \cite{wz:cd} implies that $\ccd(C)=\vcd(C)=d-1$.
Since $\Zen_{\theta\vert_C}(C)\,Z=\Zen_\theta(G)$, the claim then follows by induction.

To prove statement (iii), let $G^\circ=\kernel(\hat{\theta}\colon G\to \F_p^\times)$ and
$\baG^\circ=\kernel(\hat{\btheta}\colon \baG\to \F_p^\times)$, and put 
$\baO=O^p(\baG^\circ)$ and 
\begin{equation}
\label{eq:X0}
O=\{\,g\in G^\circ\mid g\Zen_\theta(G)\in\baO^p(\baG)\,\}.
\end{equation}
Then,  by construction, $\image(\hat{\btheta}\vert_{\baO})$ is a pro-$p$ group
and hence trivial. In particular, the left $\F_p\dbl\baO\dbr$-module $\F_p(1)$
is the trivial module. Thus, as $\baO$ is $p$-perfect, one concludes
that 
\begin{equation}
\label{eq:X1}
H^1(\baO,\F_p(1))=0.
\end{equation}
By hypothesis, $(\baG,\btheta)$ is Bloch-Kato, and therefore $(\baO,\bone)$ is Bloch-Kato.
Hence \eqref{eq:X1} yields that 
\begin{equation}
\label{eq:X2}
H^k(\baO,\F_p(j))=H^k(\baO,\F_p(0))=0
\end{equation}
for all positive integers $k$, $j$. Note that $\Z_p(1)$ is the trivial $\Z_p\dbl\baO\dbr$-module isomorphic to $\Z_p$ as abelian pro-$p$ group. The cyclotomicity of $(\baO,\bone)$ implies
that $H^2(\baO,\Z_p(1))$ is $p$-torsion free, and from the exact sequence
\begin{equation}
\label{eq:X3}
\xymatrix{0\ar[r]&H^2(\baO,\Z_p(1))\ar[r]^{\cdot p}&H^2(\baO,\Z_p(1))\ar[r]&
H^2(\baO,\F_p(1))\ar[r]&0}
\end{equation}
one concludes that 
\begin{equation}
\label{eq:X4}
H^2(\baO,\Z_p(1))=0.
\end{equation}
By hypothesis, $\ccd_p(\Zen_\theta(G))\leq\ccd_p(G)<\infty$, and thus
$\Zen_\theta(G)\simeq\Z_p(1)^r$ is a trivial left $\Z_p\dbl\baO\dbr$-module
and a finitely generated free (abelian pro-$p$ group).
Hence
\begin{equation}
\label{eq:X5}
H^2(\baO,\Zen_\theta(G))=0,
\end{equation}
which implies that
\begin{equation}
\label{eq:X6}
\xymatrix{
\triv\ar[r]&\Zen_\theta(G)\ar[r]&O\ar[r]^{\pi}&\baO\ar[r]&\triv}
\end{equation}
is a split short exact sequence of profinite groups.
From this fact one concludes that
\begin{equation}
\label{eq:X7}
O=\Zen_\theta(G)\cdot O^p(G^\circ)\qquad\text{and}
\qquad \Zen_\theta(G)\cap O^p(G^\circ)=\triv.
\end{equation}
Let $\tG=G/O^p(G^\circ)$. Then for all abelian pro-$p$ groups $M$
with a continuous left $\Z_p\dbl\tG\dbr$-action inflation induces an isomorphism
in cohomology
\begin{equation}
\label{eq:X8}
\mathrm{inf}_{\tG}^G(-)\colon H_{\mathrm{cts}}^k(\tG,M)\longrightarrow H_{\mathrm{cts}}^k(G,M)
\end{equation}
(cf. Proposition~\ref{propB}).
Moreover, as $\theta\vert_O=\bone$ is the constant $1$ function, $\theta$ induces
a $p$-orientation $\ttheta\colon\tG\to\Z_p^\times$ on $\tG$.
In particular, from \eqref{eq:X8} one concludes that $\ccd_p(\tG)<\infty$, and that $(\tG,\ttheta)$ is cyclotomic and Bloch-Kato.
Thus,  by part (i), the exact sequence of virtual pro-$p$ groups
\begin{equation}
\label{eq:X9}
\xymatrix{
\triv\ar[r]& \dfrac{\Zen_\theta(G)O^p(G^\circ)}{O^p(G^\circ)}\ar[r]&\tG\ar[r]^{\tpi}&\baG/\baO\ar[r]&\triv}
\end{equation}
splits. Let $\tH\subset\tG$ be a complement for $\Zen_\theta(G)O^p(G^\circ)/O^p(G^\circ)$
in $\tG$, and let
\begin{equation}
\label{eq:X10}
H=\{\,g\in G^\circ\mid gO^p(G^\circ)\in\tH\,\}.
\end{equation}
Then, by construction, $H\cap\Zen_\theta(G)O^p(G^\circ)\subseteq O^p(G^\circ)$.
Thus $HO^p(G^\circ)$ is a complement of $\Zen_\theta(G)$ in $G$.
\end{proof}

Finally, we ask whether the converse of Theorem~\ref{thm:fibco} holds true.

\begin{ques}
 Let $(G,\theta)$ be a cyclotomically $p$-oriented Bloch-Kato pro-$p$ group, and suppose that 
 \[H^\bullet(G,\F_p)\simeq H^\bullet(C,\F_p)\otimes\Lambda_\bullet(V),\]
 for some subgroup $C\subseteq G$ and some nontrivial subspace $V\subseteq H^1(G,\F_p)$. Does there exist an isolated closed subgroup $\Zen\subseteq\Zen_\theta(G)$ such that
 $G=C\Zen$ and $\Zen/\Zen^p\simeq V^\ast=\Hom(V,\F_p)$?
\end{ques}

%%%%%%%%%%%%%%%%%%%%%%%%%%%%%%%%%%%%%%%%%%%%%%%%%%%%%%%%%%%%%%%%%%%%5

\subsection{The elementary type conjecture}
\label{ss:ETC}

In order to formulate a conjecture concerning the maximal pro-$p$ Galois groups of fields, I.~Efrat introduced in \cite{ido:small}
the class $\mathcal{C}_{\mathrm{FG}}$ of $p$-oriented pro-$p$ groups (resp. cyclotomic pro-$p$ pairs) of {\sl elementary type}.

This class consists of all finitely generated $p$-oriented pro-$p$ groups which can be constructed from $\Z_p$ and Demu\v skin groups using coproducts and fibre products (cf. \cite[\S~3]{ido:small}).

Efrat's {\sl elementary type conjecture} asks whether every pair $(G_{\K}(p),\theta_{\K,p})$ 
for which $\K$ contains a primitive $p^{th}$-root of unity and $G_{\K}(p)$ is finitely generated, belongs to $\mathcal{C}_{\mathrm{FG}}$
(see \cite{ido:ETC}, and also \cite{JW:ETC} for the case $p=2$).
This conjecture originates from the theory of quadratic forms 
(cf. \cite{mar:ETC}, \cite[p.~268]{ido:miln}).%

One may extend slightly Efrat's class by defining the class $\mathcal{E}_{\mathrm{CO}}$ of {\sl cyclotomically $p$-oriented Bloch-Kato
pro-$p$ groups of elementary type}
to be the smallest class of cyclotomically $p$-oriented pro-$p$ groups containing
\begin{itemize}
 \item[(a)] $(F,\theta)$, with $F$ a finitely generated free pro-$p$ group
and $\theta\colon F\to\Z_p^\times$ any $p$-orientation;
 \item[(b)] $(G,\eth_G)$, with $G$ a Demu\v{s}kin pro-$p$ group;
 \item[(c)] $(\Z/2\Z,\theta)$, with $\image(\theta)=\{\pm1\}$ in case that $p=2$;
 \end{itemize}
and which is closed under coproducts and under fibre products with respect to finitely generated split $\theta$-abelian pro-$p$ groups, i.e.,  if  $(G_1,\theta_1)$ and $(G_2,\theta_2)$ are contained in $\mathcal{E}_{\mathrm{CO}}$, then
\begin{itemize}
 \item[(d)] $(G,\theta)=(G_1,\theta_1)\amalg(G_2,\theta_2)\in\mathcal{E}_{\mathrm{CO}}$; and
 \item[(e)] $(G,\theta)=\Z_p\rtimes_{\theta_1} (G_1,\theta_1) \in\mathcal{E}_{\mathrm{CO}}$.
\end{itemize}

Question~\ref{ques:ETC} asks whether every finitely generated cyclotomically $p$-oriented Bloch-Kato pro-$p$ group belongs to the class $\mathcal{E}_{\mathrm{CO}}$.
By Theorem~\ref{thmA}, Question~\ref{ques:ETC} is stronger than Efrat's elementary type conjecture. Nevertheless, it is stated in purely group theoretic terms. 

\begin{rem}
Recently, Question~\ref{ques:ETC} has received a positive solution in the class of {\sl trivially $p$-oriented right-angled Artin pro-$p$ groups}: I.~Snopce and P.A.~Zalesski{\u \i} proved that the only indecomposable right-angled Artin pro-$p$ group which is Bloch-Kato and cyclotomically $p$-oriented is $(\Z_p,\bone)$ (cf. \cite{SZ:RAAGs}).
\end{rem}

%%%%%%%%%%%%%%%%%%%%%%%%%%%%%%%%%%%%%%%%%%%%%%%%%%%%%%%%%%%%%%%%%%%%%%%5

%%%%%%%%%%%%%%%%%%%5

%%%%%%%%%%%%%%%%%%%%%%%%%%%%%%%%%%%%%%5
%%%%%%%%5
%%%%%%%%%%%%%%%%%%%%%%%%%%%%%%%%%%%%

% ------------------------------------------------------------------------

\subsection*{Acknowledgments}

{\small The authors are grateful to
I.~Efrat, for the interesting discussion they had together at the Ben-Gurion University of the Negev in 2016; and to D.~Neftin and I.~Snopce, for their interest.
Also, the first-named author wishes to thank M.~Florence and P.~Guillot for the discussions on the preprint \cite{smooth}.}

\bibliographystyle{plain}
\bibliography{BK}{}

%%--------------------Here the manuscript ends--------------------------------

% ------------------------------------------------------------------------
\end{document}
% ------------------------------------------------------------------------